\documentclass[12pt]{article}

\usepackage{amsmath,amssymb,amsthm,mathtools}
\usepackage{enumitem}
\usepackage{geometry}
\usepackage{microtype}
\usepackage{mathrsfs}

\newtheorem{theorem}{Theorem}[section]
\newtheorem{proposition}[theorem]{Proposition}
\newtheorem{lemma}[theorem]{Lemma}
\newtheorem{corollary}[theorem]{Corollary}

\theoremstyle{definition}
\newtheorem{definition}[theorem]{Definition}

\theoremstyle{remark}
\newtheorem*{remark}{Remark}

\newcommand{\F}{\mathbb F}

\newcommand{\gr}{\operatorname{gr}}
\newcommand{\Span}{\operatorname{Span}}
\newcommand{\wt}{\operatorname{wt}}
\newcommand{\Lie}{\mathrm{Lie}}

\newcommand{\Omegatopp}{\Omega_{\mathrm{top},p}}
\newcommand{\GL}{GL}

\title{Lower and Upper Lie Nilpotency Indices of Modular Group Algebras}

\author{
Joe Gildea\\
Department of Computing Science and Mathematics,\\
School of Informatics and Creative Arts,\\
Dundalk Institute of Technology, Ireland\\
\texttt{gildeajoe@gmail.com}
}

\date{}

\begin{document}

\maketitle

\begin{abstract}
We resolve the remaining cases of a longstanding problem on the lower
and upper Lie nilpotency indices of modular group algebras. Equality
of the two indices has been known in characteristics greater than three
since 1992; we establish it in the remaining characteristics two and
three, thereby completing the problem for finite groups of prime-power
order over finite fields. The proof gives a uniform argument for every
prime and introduces a terminal restricted Poincaré–Birkhoff–Witt construction that
converts the lower-Lie problem into a chronological reachability
argument, with no structural hypothesis on the derived subgroup.
\end{abstract}

\medskip
\noindent\textbf{Mathematics Subject Classification (2020).}
Primary 16S34; Secondary 17B30, 20C05.

\medskip
\noindent\textbf{Keywords.}
modular group algebra, Lie nilpotency, lower Lie nilpotency index,
upper Lie nilpotency index, Lie dimension subgroup, restricted PBW theorem,
finite $p$-group.

\section{Introduction}
\label{sec:introduction}

Let \(K\) be a field of characteristic \(p>0\) and let \(G\) be a group
such that the group algebra \(KG\) is Lie nilpotent. Regarding \(KG\) as
a Lie algebra under the commutator \( [x,y]=xy-yx \), one obtains two
natural Lie filtrations, the lower and upper Lie powers, and hence the
corresponding lower and upper Lie nilpotency indices \(t_L(KG)\) and
\(t^L(KG)\). The study of these indices is rooted in the classical
theory of dimension subgroups and modular group algebras. Jennings'
description of the dimension filtration of a modular group algebra
\cite{Jennings1941} provided the basic connection between the
group theoretic filtration and the structure of the augmentation ideal.
The theory of Lie dimension subgroups was subsequently developed further
by Passi and Sehgal \cite{PassiSehgal1975} and by Shalev
\cite{Shalev1991}, among others, and provides the natural framework for
the upper Lie nilpotency index.

A central problem in this theory is to determine when the lower and
upper Lie nilpotency indices coincide. One always has $t_L(KG)\leq t^L(KG)$,
but the reverse inequality is substantially more delicate. Bhandari and
Passi \cite{BhandariPassi1992} proved equality in characteristic
\(p>3\). Subsequent work has determined the indices, or established
their equality, under a variety of additional hypotheses and for
important extremal or prescribed values. This includes the study of
maximal and almost maximal Lie nilpotency indices by Bovdi and Spinelli
\cite{BovdiSpinelli2005} and Bovdi, Juhász and Spinelli
\cite{BovdiJuhaszSpinelli2006}, further investigations of Lie
nilpotency indices and Lie dimension subgroups by Spinelli
\cite{Spinelli2009}, Bovdi and Srivastava \cite{BovdiSrivastava2010},
and a series of later results concerning particular upper and lower Lie
nilpotency indices and related classification problems
\cite{SahaiSharmaSiwach2017,SahaiSharan2018,SahaiSiwachSharma2018,
SahaiSharan2019,SahaiSharan2021,BhattChandra2022}. These results resolve
many significant special cases, while leaving the equality problem in
the remaining characteristics without a general structural hypothesis
on \(G'\).

The purpose of the present paper is to establish the equality in full
generality for finite \(p\) groups over finite fields of characteristic
\(p\). More precisely, we prove $t_L(KG)=t^L(KG)$
for every finite field \(K\) of characteristic \(p\) and every finite
\(p\) group \(G\). No additional structural assumption is imposed on
the derived subgroup \(G'\). In particular, this settles the remaining
characteristics \(p=2\) and \(p=3\).

The proof differs from the predominantly structural approach of the
existing literature. Rather than deriving the lower index from a
classification or from additional structural restrictions on \(G'\),
we retain the classical Lie dimension filtration as the description of
the upper index and construct directly a lower Lie commutator of the
required length. The bridge between the two is a terminal restricted
PBW construction. The dimension data determine a terminal PBW target
of Jennings weight, and the main argument proves that this target is
reachable by one genuine chronological lower Lie history. The
construction is uniform in \(p\) and requires no additional structural
hypothesis on \(G'\). We first prove the result over the prime field
\(\F_p\) and then pass to every finite extension of \(\F_p\) by scalar
extension.

The starting point is the classical upper Lie dimension filtration. If
\[
    D_{(m),K}(G):=G\cap(1+KG^{(m)}),
\]
then the standard containments
\[
    [D_{(r),K}(G),D_{(s),K}(G)]
        \subseteq D_{(r+s),K}(G),
    \qquad
    D_{(r),K}(G)^p\subseteq D_{(pr),K}(G)
\]
hold \cite{PassiSehgal1975,Shalev1991}. If
\[
    p^{d_{(m)}}=
    |D_{(m),K}(G):D_{(m+1),K}(G)|,
\]
then Jennings' formula gives
\[
    t^L(KG)
    =
    2+(p-1)\sum_{m\geq1}m\,d_{(m+1)}.
\]
Thus the upper index is determined by the successive homogeneous layers
of the Lie dimension filtration. By contrast, the lower index is
governed by actual iterated Lie commutators. The central problem is
therefore to pass from the static graded information determining
\(t^L(KG)\) to one chronological lower Lie process of the full required
length.

We carry out this passage first for \(R=\F_pG\). Put
\[
    V_m:=D_{(m+1)},\qquad
    E_m:=V_m/V_{m+1},\qquad
    E:=\bigoplus_{m\geq1}E_m,
\]
write \(d_m:=\dim_{\F_p}E_m\), and set
\[
    N_p:=(p-1)\sum_{m\geq1}m\,d_m.
\]
The shifted filtration satisfies
\[
    [V_a,V_b]\subseteq V_{a+b+1},
    \qquad
    V_a^p\subseteq V_{pa},
\]
and \(E\) is an abelian graded restricted Lie algebra. Jennings'
formula becomes
\[
    t^L(R)=N_p+2.
\]
Choose an ordered homogeneous basis
\(\mathcal B=(e_1,\ldots,e_d)\) of \(E\). By the restricted
Poincaré Birkhoff Witt theorem, the monomials
\[
    e_1^{\alpha_1}\cdots e_d^{\alpha_d},
    \qquad
    0\leq\alpha_i\leq p-1,
\]
form a basis of \(\mathcal A:=u(E)\). Define the terminal monomial
\[
    \Omegatopp:=e_1^{p-1}\cdots e_d^{p-1}.
\]
Then \(\Omegatopp\neq0\) and
\[
    \wt(\Omegatopp)
    =
    (p-1)\sum_{m\geq1}m\,d_m
    =
    N_p.
\]
Thus the upper Lie dimension data determine a single terminal
restricted PBW monomial of precisely the weight required by Jennings'
formula. For background on restricted Lie algebras and restricted
enveloping algebras, see
\cite{Jacobson1962,StradeFarnsteiner1988}.

The central reduction is to show that this terminal PBW state is the
graded shadow of one genuine chronological lower Lie history. We
formulate chronological routing in \(u(E)\) so that a routing word
records the graded output of successive operations, while genuineness
requires the entire history to arise from a single actual left normed
commutator. If \(W\) is such a genuine chronological routing word and
$[\Omegatopp]W(1)\neq0$, then homogeneity forces \(W\) to have routing weight \(N_p\), while
genuineness produces a nonzero \((N_p+1)\) fold lower Lie commutator.
Consequently, $R^{[N_p+1]}\neq0$, $t_L(R)\geq N_p+2=t^L(R)$. The general containment \(A^{[n]}\subseteq A^{(n)}\) gives the reverse
inequality. The equality problem is therefore reduced to terminal PBW
reachability.

To prove this reachability, restricted PBW monomials are encoded by
occupancy vectors, with a nonnegative occupancy deficit measuring the
distance from the terminal vector \((p-1,\ldots,p-1)\). We construct a
fixed homogeneous routing basis whose nonprimitive directions carry
stored provenance through two operations, PUSH and restricted
\(p\) POWER. Tracing this provenance backwards strictly decreases
homogeneous weight. Since PUSH acts as a derivation and restricted
normalization can produce competing PBW terms, a protection mechanism
isolates the distinguished routing contribution and prevents
cancellation.

The local constructions of reconstruction, FILL, and POWER depend on
one another and are therefore proved simultaneously by strong induction
on homogeneous weight. They are then assembled into the
BUILD and REBUILD mechanism. BUILD creates new certified occupancy,
while REBUILD restores any previously certified occupancy spent in
producing it. Restoration follows stored provenance backwards, and a
well founded multiset of debt weights guarantees termination. The
resulting PBW reachability theorem states that every nonterminal
completed certified state admits a finite genuine chronological
continuation which preserves all previously certified occupancy and
strictly decreases the occupancy deficit.

Since the deficit is a nonnegative integer, repeated reachability
terminates at a completed certified terminal state. The successive
continuations concatenate to one genuine chronological routing history.
After rebasing the resulting terminal routing basis monomial to
\(\Omegatopp\), its coefficient remains nonzero. The terminal PBW
reduction therefore gives $t_L(\F_pG)\geq t^L(\F_pG)$. Together with the reverse inequality,
$t_L(\F_pG)=t^L(\F_pG)=N_p+2$. Finally, scalar extension commutes with both Lie power constructions
and preserves their first vanishing terms, giving $t_L(KG)=t^L(KG)=N_p+2$ for every finite field \(K\) of characteristic \(p\).

The organisation of the paper follows this construction.
Section~\ref{sec:pbw-chronological-framework} develops the restricted
PBW and chronological framework and reduces the difficult inequality
to terminal PBW reachability.
Section~\ref{sec:routing-protection} constructs the fixed routing basis
and the protection and noncancellation machinery.
Section~\ref{sec:simultaneous-reconstruction-fill-power} establishes
reconstruction, Certified FILL, and Certified POWER simultaneously by
strong induction on homogeneous weight.
Section~\ref{sec:build-rebuild} develops the terminating BUILD and
REBUILD mechanism, and Section~\ref{sec:pbw-reachability} proves PBW
reachability.
Finally, Section~\ref{sec:terminal-completion-equality} iterates
reachability to the terminal state, proves the required lower bound,
and completes the proof over finite fields by scalar extension.

\section{PBW and Chronological Framework}
\label{sec:pbw-chronological-framework}

The purpose of this section is to formulate the terminal PBW target and
the chronological framework in which it will be reached.  We first
record the coefficient and homogeneous-weight language for the
restricted PBW basis of \(\mathcal A=u(E)\), and then define
chronological routing and the genuineness condition which connects a
graded routing history to an actual lower-Lie commutator in \(R\).
This reduces the difficult inequality \(t_L(R)\geq t^L(R)\) to the
construction of one genuine chronological history whose complete
graded output has nonzero coefficient at the terminal PBW monomial
\(\Omegatopp\).

We then encode restricted PBW monomials by occupancy vectors.  The
resulting occupancy deficit measures the distance from a certified PBW
state to the terminal state and will provide the finite progress
measure used in the subsequent routing and BUILD REBUILD
construction. We first identify the restricted PBW monomial whose weight realizes the
quantity appearing in Jennings' formula.  This will be the terminal
state of the subsequent construction.

Choose an ordered homogeneous basis \(\mathcal B=(e_1,\ldots,e_d)\) of \(E\), and let \(\mathcal A:=u(E)\). For a homogeneous element \(x\in E_m\), write \(\wt(x):=m\) for its upper-Lie weight, and extend this additively to restricted PBW monomials:
\[
    \wt\!\left(e_1^{\varepsilon_1}\cdots e_d^{\varepsilon_d}\right)
    :=
    \sum_{i=1}^{d}\varepsilon_i\,\wt(e_i).
\]

By the restricted PBW theorem, the monomials \(e_1^{\varepsilon_1}\cdots e_d^{\varepsilon_d}\), \(0\leq\varepsilon_i\leq p-1\), form an \(\F_p\)-basis of \(\mathcal A\). Define \(\Omegatopp:=e_1^{p-1}\cdots e_d^{p-1}\). By restricted PBW, \(\Omegatopp\neq0\). Moreover,
\[
    \wt(\Omegatopp)
    =
    \sum_{i=1}^{d}(p-1)\wt(e_i)
    =
    (p-1)\sum_{m\geq1}m\,d_m
    =
    N_p
    =
    t^L(R)-2.
\]

Thus \(\Omegatopp\) is a nonzero restricted PBW monomial of precisely the weight required to detect the last possible nonzero stage relevant to the lower Lie nilpotency index. Our task will therefore be to produce \(\Omegatopp\) with nonzero coefficient as the graded shadow of a single actual commutator history. To track this distinguished monomial through the construction, we use the following coefficient notation.

\begin{definition}
\label{def:terminal-pbw-coefficient}
Let $M  =e_1^{\varepsilon_1} \cdots e_d^{\varepsilon_d}$, $0\leq\varepsilon_i\leq p-1$, be an ordered restricted PBW monomial.
For $X\in\mathcal A$, write its unique restricted PBW expansion as
\[
        X
        =
        \sum_{\epsilon\in\{0,\ldots,p-1\}^{d}}
        c_\epsilon\,
        e_1^{\epsilon_1}\cdots e_d^{\epsilon_d},
        \qquad
        c_\epsilon\in\F_p.
\]

The coefficient of \(M\) in \(X\) is denoted by $[M]X$.
\end{definition}

\begin{lemma}
\label{lem:terminal-pbw-coefficient-detection}

Let $X\in\mathcal A$. If $[M]X\neq0$ for some restricted PBW monomial \(M\), then $X\neq0$. In particular,
$X\neq0$ if $[\Omegatopp]X\neq0$.
\end{lemma}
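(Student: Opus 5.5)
The plan is to argue by contraposition, using only the uniqueness half of the restricted PBW theorem. By the restricted PBW theorem, the ordered monomials \(e_1^{\epsilon_1}\cdots e_d^{\epsilon_d}\) with \(0\leq\epsilon_i\leq p-1\) form an \(\F_p\)-basis of \(\mathcal A=u(E)\). Consequently every \(X\in\mathcal A\) has a unique expansion as in Definition~\ref{def:terminal-pbw-coefficient}, and the assignment \(X\mapsto[M]X\) is a well-defined \(\F_p\)-linear functional on \(\mathcal A\). It is simply the coordinate functional dual to the basis vector \(M\).

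Now suppose \(X=0\). The expansion with all coefficients \(c_\epsilon=0\) represents \(0\), and by uniqueness it is the only expansion of \(0\). Hence \([M]0=0\) for every restricted PBW monomial \(M\); equivalently, linearity of the coordinate functional gives this directly. Taking the contrapositive, if \([M]X\neq0\) for some \(M\), then \(X\neq0\).

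For the second assertion, specialise to \(M=\Omegatopp=e_1^{p-1}\cdots e_d^{p-1}\). This is the ordered monomial with \(\varepsilon_i=p-1\) for all \(i\), which lies in the admissible exponent range \(\{0,\ldots,p-1\}^d\). It is therefore one of the PBW basis vectors, and the first part applies to it.

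There is no real obstacle here. The only point needing care is that \([M]X\) is well defined, which requires linear independence of the PBW monomials and not merely that they span. This is exactly the content of the restricted PBW theorem invoked in the preceding discussion.
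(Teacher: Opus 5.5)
Your proposal is correct and takes essentially the same approach as the paper: the restricted PBW monomials form an $\F_p$-basis of $\mathcal A$, so every coefficient of $0$ vanishes, and the claim follows by contraposition with the specialisation $M=\Omegatopp$. Your remarks that $\Omegatopp$ has exponents in the admissible range and that well-definedness of $[M]X$ needs linear independence are harmless clarifications that the paper leaves implicit.
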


\begin{proof}
The restricted PBW monomials form an \(\F_p\)-basis of $\mathcal A$. Hence their coefficients in the PBW expansion of any element are uniquely determined.
If $X=0$, then every restricted PBW coefficient of \(X\) is zero. Therefore $[M]X\neq0$ for even one restricted PBW monomial \(M\) forces
$ X\neq0$. Taking $M=\Omegatopp$ gives the final assertion.
\end{proof}

Nonvanishing of a restricted PBW coefficient therefore detects
nonvanishing of the complete graded output.  To relate such an output
to the length of a chronological commutator history, we next record
the homogeneous upper-Lie grading on \(\mathcal A\).

\begin{definition}
\label{def:homogeneous-upper-lie-components}

For each \(n\geq 0\), define
\[
        \mathcal A_n
        :=
        \operatorname{span}_{\F_p}
        \left\{
        e_1^{\varepsilon_1}\cdots e_d^{\varepsilon_d}
        :
        \sum_{i=1}^{d}
        \varepsilon_i\,\wt(e_i)=n
        \right\}.
\]

Thus \(\mathcal A_n\) is the homogeneous component of upper-Lie
weight \(n\) in $ \mathcal A=u(E)$.

\end{definition}

We now pass from the static PBW grading to the chronological
construction that will be used throughout the proof.  A routing word
records the successive graded operations, while its routing weight
records their total homogeneous weight increment.

\begin{definition}
\label{def:chronological-routing-word}
A \emph{chronological routing word} is a finite composition $W = T_rT_{r-1}\cdots T_1$, where every \(T_j\) is an elementary chronological routing step. If the routing weight of \(T_j\) is \(q_j\), define the total routing weight of \(W\) by $\wt_{\mathrm{route}}(W) := q_1+\cdots+q_r$. Its complete graded output from the unit state is denoted by $W(1)$.
\end{definition}

\begin{lemma}
\label{lem:chronological-routing-homogeneous}
For every chronological routing word \(W\), $W(1)\in \mathcal A_{\wt_{\mathrm{route}}(W)}$.
\end{lemma}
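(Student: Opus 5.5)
The plan is to argue by induction on the length \(r\) of the routing word \(W=T_r\cdots T_1\), using a single structural property of elementary steps. The property needed is that an elementary chronological routing step \(T\) of routing weight \(q\) is a homogeneous operator of degree \(q\) on the grading of Definition~\ref{def:homogeneous-upper-lie-components}; that is, \(T(\mathcal A_n)\subseteq\mathcal A_{n+q}\) for every \(n\geq0\). The first step is therefore to verify this for each kind of elementary step, using only the grading of \(E=\bigoplus_m E_m\) and the fact that \(\wt\) is additive on PBW monomials. The second step is the induction itself.

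For the homogeneity check, recall that \(E\) is abelian and graded, so \(\mathcal A=u(E)\) is a graded algebra: \(\mathcal A_a\mathcal A_b\subseteq\mathcal A_{a+b}\). Because \(E\) is abelian, products of PBW monomials reorder freely. The only normalization needed is replacing \(e_i^p\) by \(e_i^{[p]}\), which lies in \(E_{p\,\wt(e_i)}\) since \(V_a^p\subseteq V_{pa}\), and this again has weight \(p\,\wt(e_i)\). The two step types are then handled as follows.
\begin{itemize}
\item A PUSH step by a homogeneous direction \(x\in E_q\) is multiplication by, or a derivation determined by, an element of degree \(q\), so it shifts the degree by \(q\).
\item A restricted \(p\)-POWER step of a homogeneous direction of weight \(a\) produces an element of weight \(pa\). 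It is assigned routing weight equal to the resulting increment, so it is also homogeneous of its routing weight.
\end{itemize}
If the formal definition of elementary step (given later in the paper) specifies its routing weight as the degree by which it shifts the grading, this step is immediate from the definition.

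For the induction, the base case \(r=0\) is the empty word with \(W(1)=1\in\mathcal A_0\), and its routing weight is \(0\). For the inductive step, write \(W=T_rW'\) with \(W'=T_{r-1}\cdots T_1\). By induction, \(W'(1)\in\mathcal A_{q_1+\cdots+q_{r-1}}\). Applying \(T_r\) and its homogeneity gives \(W(1)=T_r(W'(1))\in\mathcal A_{q_1+\cdots+q_r}=\mathcal A_{\wt_{\mathrm{route}}(W)}\).

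The main obstacle is the homogeneity of the restricted POWER and normalization steps. Rewriting a product into ordered restricted PBW form must not mix weights, and this holds because all relations in \(u(E)\), namely commutativity and \(x^p=x^{[p]}\), are homogeneous. I expect this to be where care is needed. Concretely, one must confirm that the \([p]\)-map on \(E\) respects the shifted grading, which follows from \(V_a^p\subseteq V_{pa}\) and the definition \(E_m=V_m/V_{m+1}\).
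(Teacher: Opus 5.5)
Your argument is essentially the paper's own. You induct on the length of $W$, with base case $1\in\mathcal A_0$ and the step that an elementary step of routing weight $q$ maps $\mathcal A_n$ into $\mathcal A_{n+q}$; the paper just adds the homogeneous weight increments of the successive steps.

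One small correction to how you describe the steps, which does not affect the homogeneity argument:
\begin{itemize}
\item In the paper a PUSH is the degree-one derivation $D_g$ attached to a group element $g\in G$, not an operator attached to a direction in $E_q$.
\item CREATE is multiplication by some $b_j\in E_1$, and each appended Lie bracket contributes routing weight $1$.
\item Restricted normalization $b_i^p=b_i^{[p]}$ is an identity in $u(E)$ and contributes no routing weight; it is not a weight-raising step of its own.
\end{itemize}
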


\begin{proof}
Since \(1\in\mathcal A_0\), the assertion follows by adding the
homogeneous weight increments of the successive routing operations.
\end{proof}

A chronological routing word by itself records only a graded
construction.  For the lower Lie series we require more: the entire
graded history must arise from one actual chronological sequence of
Lie brackets in \(R\).  The following definition records precisely
this additional requirement.

\begin{definition}
\label{def:genuine-chronological}
A chronological routing word \(W\), of routing weight \(n\), is
\emph{genuine chronological} if its complete graded history is the
leading graded history of a single actual left-normed commutator
$C_{n+1}= [x_1,x_2,\ldots,x_{n+1}]$, $x_i\in R$. Thus \(W(1)\) is the image of \(C_{n+1}\) in the corresponding
associated-graded component.
\end{definition}

\begin{proposition}
\label{prop:terminal-pbw-reduction}
Suppose that there exists a genuine chronological routing word \(W\) such that $[\Omegatopp]W(1)\neq0$. Then $R^{[N_p+1]}\neq0$ and consequently $t_L(R)\geq t^L(R)$.
\end{proposition}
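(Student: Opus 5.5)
The plan is to chain Lemma~\ref{lem:terminal-pbw-coefficient-detection}, Lemma~\ref{lem:chronological-routing-homogeneous} and Definition~\ref{def:genuine-chronological}, and then compare the resulting commutator length with Jennings' formula $t^L(R)=N_p+2$.

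\textbf{Step 1: pin down the routing weight.} Let $W$ be the given genuine chronological routing word and put $n:=\wt_{\mathrm{route}}(W)$. By Lemma~\ref{lem:chronological-routing-homogeneous}, $W(1)\in\mathcal A_n$. The restricted PBW monomials are homogeneous and form a basis, so $\mathcal A=\bigoplus_n\mathcal A_n$ with each $\mathcal A_n$ spanned by the monomials of weight $n$. Hence the coefficient of $\Omegatopp$ in any element of $\mathcal A_n$ can be nonzero only if $\wt(\Omegatopp)=n$. Since $[\Omegatopp]W(1)\neq0$ and $\wt(\Omegatopp)=N_p$, we get $n=N_p$. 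Lemma~\ref{lem:terminal-pbw-coefficient-detection} also gives $W(1)\neq0$.

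\textbf{Step 2: produce a nonzero lower Lie commutator.} By Definition~\ref{def:genuine-chronological}, there are $x_1,\dots,x_{N_p+1}\in R$ such that $W(1)$ is the image of
\[
C_{N_p+1}=[x_1,\ldots,x_{N_p+1}]
\]
in the associated graded component of weight $N_p$. The quotient map sends $0$ to $0$. Since $W(1)\neq0$, it follows that $C_{N_p+1}\neq0$. A left-normed commutator of $N_p+1$ elements lies in $R^{[N_p+1]}$, so $R^{[N_p+1]}\neq0$.

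\textbf{Step 3: conclude the inequality.} The lower Lie nilpotency index $t_L(R)$ is the least $t$ with $R^{[t]}=0$. Since the lower Lie powers are decreasing, $R^{[N_p+1]}\neq0$ forces $t_L(R)\geq N_p+2$. Jennings' formula gives $t^L(R)=N_p+2$, so $t_L(R)\geq t^L(R)$.

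\textbf{Main obstacle.} The delicate point is Step 2: making sure ``image in the associated-graded component'' is a well-defined map that sends $C_{N_p+1}$ to $W(1)$, so that nonvanishing genuinely lifts. This requires the component to be identified with $\mathcal A_{N_p}$ compatibly with the filtration of $R$.

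I would handle this by reading Definition~\ref{def:genuine-chronological} as providing exactly that: $C_{N_p+1}$ lies in the filtration term of weight $N_p$, and its class modulo the next term corresponds to $W(1)$. This is the Jennings--Quillen identification of $\gr R$ with $u(E)$, shifted to the Lie dimension filtration. Only the easy direction is needed: a nonzero class has a nonzero representative.
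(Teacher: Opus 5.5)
Your proposal is correct and follows the paper's proof essentially step for step: Lemma~\ref{lem:chronological-routing-homogeneous} together with $\wt(\Omegatopp)=N_p$ forces $\wt_{\mathrm{route}}(W)=N_p$; Definition~\ref{def:genuine-chronological} supplies the commutator $C_{N_p+1}$, whose graded image $W(1)$ is nonzero by Lemma~\ref{lem:terminal-pbw-coefficient-detection}; and $t^L(R)=N_p+2$ converts $R^{[N_p+1]}\neq0$ into $t_L(R)\geq t^L(R)$. As in the paper, Step~2 rests entirely on the identification built into Definition~\ref{def:genuine-chronological}, so you should cite that definition rather than your Jennings--Quillen gloss, because that theorem concerns the augmentation-ideal filtration of $\F_pG$ and $u$ of the Jennings restricted Lie algebra of all of $G$, not $u(E)$ built from the layers of $G'$.
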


\begin{proof}
Let $n:=\wt_{\mathrm{route}}(W)$. By Lemma~\ref{lem:chronological-routing-homogeneous}, $W(1)\in\mathcal A_n$. Since \([\Omegatopp]W(1)\neq0\), the terminal PBW monomial occurs
in this homogeneous component. Hence $n=\wt(\Omegatopp)=N_p$. Since \(W\) is genuine chronological, its complete graded history is the leading graded history of
an actual left-normed commutator $C_{N_p+1} = [x_1,\ldots,x_{N_p+1}]$, and \(W(1)\) is the image of \(C_{N_p+1}\) in the corresponding associated-graded component. The nonzero coefficient $[\Omegatopp]W(1)\neq0$ implies \(W(1)\neq0\), and therefore \(C_{N_p+1}\neq0\).
Thus $R^{[N_p+1]}\neq0$. It follows that $t_L(R)\geq N_p+2=t^L(R)$.
\end{proof}

Proposition~\ref{prop:terminal-pbw-reduction} reduces the difficult
inequality to a concrete reachability problem: it is enough to produce
one genuine chronological routing word whose complete graded output
has nonzero coefficient at the terminal restricted PBW monomial.

We now formulate this reachability problem as a finite occupancy
problem in \(u(E)\).  A restricted PBW monomial is encoded by its
exponent vector, and the occupancy deficit measures the distance of
this vector from the terminal vector
\((p-1,\ldots,p-1)\).  Terminal reachability will therefore amount to
reducing a nonnegative integer-valued deficit to zero while preserving
the occupancies already certified.

\begin{definition}
\label{def:general-p-pbw-state}
A \emph{restricted PBW state} is an occupancy vector
$\nu=(\nu_1,\ldots,\nu_d)\in\{0,\ldots,p-1\}^d$
relative to a fixed ordered homogeneous basis
\(\mathcal C=(c_1,\ldots,c_d)\) of \(E\).
Its distinguished restricted PBW monomial is
$M(\nu)=c_1^{\nu_1}\cdots c_d^{\nu_d}$.
A direction \(c_i\) is occupied precisely when \(\nu_i>0\).
\end{definition}

\begin{definition}
\label{def:general-p-completed-certified-state}
Let \(X\in u(E)\) be the complete graded output of one genuine chronological routing history. A restricted PBW state \(\nu\) relative to the fixed ordered homogeneous basis \(\mathcal C=(c_1,\ldots,c_d)\) is \emph{completed certified in \(X\)} if $[M(\nu)]_{\mathcal C}X\in\F_p^\times$.
\end{definition}

\begin{definition}
\label{def:general-p-local-deficit}
Let $\nu=(\nu_1,\ldots,\nu_d)\in\{0,\ldots,p-1\}^d$ be a completed certified restricted PBW state relative to the fixed ordered homogeneous basis \(\mathcal C=(c_1,\ldots,c_d)\) of \(E\). Define its \emph{total completed occupancy} by
\[
    |\nu|_{\mathrm{occ}}
    :=
    \sum_{i=1}^{d}\nu_i.
\]

Define its \emph{local PBW occupancy deficit} by
\[
    \Delta_p(\nu)
    :=
    \sum_{i=1}^{d}\bigl((p-1)-\nu_i\bigr)
    =
    (p-1)d-|\nu|_{\mathrm{occ}}.
\]

Since \(0\leq\nu_i\leq p-1\), one has $0\leq\Delta_p(\nu)\leq(p-1)d$.

\end{definition}

\begin{definition}
\label{def:general-p-terminal-state}
A completed certified restricted PBW state \(\nu=(\nu_1,\ldots,\nu_d)\) relative to \(\mathcal C\) is called \emph{terminal} if $\nu=(p-1,\ldots,p-1)$. Equivalently,
$M(\nu) = c_1^{p-1}\cdots c_d^{p-1}$.
\end{definition}

\begin{proposition}
\label{prop:general-p-terminal-characterization}
Let $\nu=(\nu_1,\ldots,\nu_d)$ be a completed certified restricted PBW state relative to
\(\mathcal C\). Then the following are equivalent:
\begin{itemize}
\item[(1)] \(\nu\) is terminal;
\item[(2)] \(\nu=(p-1,\ldots,p-1)\);
\item[(3)] \( |\nu|_{\mathrm{occ}}=(p-1)d\);
\item[(4)] \(\Delta_p(\nu)=0\).
\end{itemize}
\end{proposition}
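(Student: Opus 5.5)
The plan is to prove the cycle of implications $(1)\Leftrightarrow(2)$, $(2)\Rightarrow(3)\Rightarrow(4)\Rightarrow(2)$. Every step uses only Definitions~\ref{def:general-p-local-deficit} and~\ref{def:general-p-terminal-state} together with the constraint $0\leq\nu_i\leq p-1$ from Definition~\ref{def:general-p-pbw-state}. The certification hypothesis plays no role beyond guaranteeing that $\nu$ is a legitimate state.

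First, $(1)\Leftrightarrow(2)$ is the definition of terminal in Definition~\ref{def:general-p-terminal-state}. Next, for $(2)\Rightarrow(3)$ I will substitute $\nu_i=p-1$ for all $i$ to get $|\nu|_{\mathrm{occ}}=\sum_{i=1}^d(p-1)=(p-1)d$. For $(3)\Rightarrow(4)$ I will use the identity $\Delta_p(\nu)=(p-1)d-|\nu|_{\mathrm{occ}}$ recorded in Definition~\ref{def:general-p-local-deficit}. For $(4)\Rightarrow(3)$ the same identity gives $|\nu|_{\mathrm{occ}}=(p-1)d$, so $(3)\Leftrightarrow(4)$.

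The only step with any content is $(4)\Rightarrow(2)$. By definition,
\[
    \Delta_p(\nu)=\sum_{i=1}^{d}\bigl((p-1)-\nu_i\bigr),
\]
and each summand is a nonnegative integer because $\nu_i\leq p-1$. A sum of nonnegative terms vanishes only if every term vanishes. Hence $\nu_i=p-1$ for every $i$, which is (2).

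I expect no real obstacle. The one point to state explicitly is that the upper bound $\nu_i\leq p-1$ is what makes each summand nonnegative and so forces termwise equality. Without this bound, condition (3) alone would not pin down $\nu$.
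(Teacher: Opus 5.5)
Your proposal is correct and follows the paper's own proof: $(1)\Leftrightarrow(2)$ by definition, $(2)\Rightarrow(3)\Rightarrow(4)$ by substitution and the identity $\Delta_p(\nu)=(p-1)d-|\nu|_{\mathrm{occ}}$, and $(4)\Rightarrow(2)$ because a vanishing sum of the nonnegative integers $(p-1)-\nu_i$ forces every term to vanish. Your remark that the bound $\nu_i\le p-1$ is what makes these terms nonnegative is exactly the point the paper's argument uses.
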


\begin{proof}
By Definition~\ref{def:general-p-terminal-state}, conditions \textup{(1)} and \textup{(2)} are equivalent. If \(\nu=(p-1,\ldots,p-1)\), then $|\nu|_{\mathrm{occ}}=(p-1)d$ and hence
$\Delta_p(\nu)= (p-1)d-|\nu|_{\mathrm{occ}} =0$. Conversely, if \(\Delta_p(\nu)=0\), then $\sum_{i=1}^{d}\bigl((p-1)-\nu_i\bigr)=0$. Every summand is a nonnegative integer, so every summand is zero. Thus \(\nu_i=p-1\) for every \(i\), and therefore \(\nu=(p-1,\ldots,p-1)\). The remaining equivalence follows from $\Delta_p(\nu)=(p-1)d-|\nu|_{\mathrm{occ}}$.
\end{proof}

The preceding reduction now gives a precise finite target for the
remainder of the argument.  Starting from a completed certified PBW
state, it is enough to construct genuine chronological continuations
which preserve the occupancies already certified while increasing the
total completed occupancy.  Equivalently, each successful continuation
must strictly decrease the nonnegative integer
\(\Delta_p(\nu)\).  Repetition of such progress must eventually reach
\(\Delta_p(\nu)=0\), which by
Proposition~\ref{prop:general-p-terminal-characterization} is exactly
the terminal state.  At that point the distinguished monomial is the
terminal restricted PBW monomial, and
Proposition~\ref{prop:terminal-pbw-reduction} converts its nonzero
coefficient along a genuine chronological history into the required
lower-Lie nonvanishing.

The remaining task is therefore constructive: we must develop routing
operations which create and propagate the required PBW directions
without destroying coefficients that have already been certified.
The next section develops the routing, ancestry, protection, and
noncancellation machinery needed for this purpose.


\section{Routing and Protection Machinery}
\label{sec:routing-protection}

This section develops the routing and protection machinery needed for the
chronological construction. We first construct a fixed homogeneous routing
basis of \(E\), with each basis direction assigned a stored ancestry from
\(E_1\) through PUSH and restricted POWER. We then develop the
noncancellation mechanism that protects distinguished routing contributions
from competing descendant terms and restricted PBW normalization. Finally,
we introduce the CREATE, FILL, and POWER primitives that will be used in the
simultaneous reconstruction argument of the next section.

\subsection{Routing, ancestry, and provenance}
\label{subsec:routing-ancestry-provenance}

We begin by constructing a single homogeneous routing basis of \(E\), with
each basis direction assigned a fixed ancestry from \(E_1\) through transport
(PUSH) and restricted \(p\)-POWER. This stored ancestry will determine the
routes along which later chronological constructions produce new basis
directions. We first establish the algebraic properties of transport needed
for this construction.

\begin{lemma}
\label{lem:general-p-commutator-power-congruence}

Let \(v\in V_m\) and \(g\in G\). Then $[v^p,g] \equiv [v,g]^p \pmod{V_{pm+2}}$.
\end{lemma}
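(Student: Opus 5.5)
The plan is to show that both sides of the congruence already lie in \(V_{pm+2}\), so that both are \(\equiv 1\) and the congruence holds trivially. I read \([\cdot,\cdot]\) here as the group commutator and the congruence as equality in \(G/V_{pm+2}\). The only tools are the two standard containments from the introduction, taken in the \emph{unshifted} form
\[
[D_{(r),K}(G),D_{(s),K}(G)]\subseteq D_{(r+s),K}(G),\qquad D_{(r),K}(G)^p\subseteq D_{(pr),K}(G).
\]
The shifted statement \(V_a^p\subseteq V_{pa}\) is weaker and loses exactly the margin we need, so we work with the unshifted indices throughout and translate back only at the end via \(V_k=D_{(k+1)}\).

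\emph{Step 1 (left side).} Since \(v\in V_m=D_{(m+1)}\), the power containment gives \(v^p\in D_{(p(m+1))}\). Note that \(g\in G=D_{(1)}\). The commutator containment then gives
\[
[v^p,g]\in D_{(pm+p+1)}=V_{pm+p}.
\]

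\emph{Step 2 (right side).} The commutator containment gives \([v,g]\in D_{(m+2)}\). Applying the power containment,
\[
[v,g]^p\in D_{(pm+2p)}=V_{pm+2p-1}.
\]

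\emph{Step 3 (comparison).} The filtration is descending, and for \(p\geq2\) we have \(pm+p\geq pm+2\) and \(pm+2p-1\geq pm+2\). Hence both \([v^p,g]\) and \([v,g]^p\) lie in \(V_{pm+2}\). In particular \([v^p,g]\,([v,g]^p)^{-1}\in V_{pm+2}\), which is the asserted congruence.

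The only real obstacle is bookkeeping of indices. One must not use the shifted rules \([V_a,V_b]\subseteq V_{a+b+1}\) and \(V_a^p\subseteq V_{pa}\). These give only \(v^p\in V_{pm}\) and \([v^p,g]\in V_{pm+1}\), which falls one step short of \(V_{pm+2}\). The extra room comes from computing \(v^p\in D_{(pm+p)}\) before shifting. If the intended reading of \([v,g]\) were instead an associated-graded or Lie-commutator version, I would use the same index count with \(v=1+a\), \(a^p=v^p-1\) in characteristic \(p\). Here \(\mathrm{ad}(a^p)=\mathrm{ad}(a)^p\), and the standard Lie containments place both sides in the corresponding deeper layer.
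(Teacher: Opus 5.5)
Your proof rests on the unshifted power rule $D_{(r),K}(G)^p\subseteq D_{(pr),K}(G)$ as displayed in the introduction, and that rule is false. It is not a sharper form of $V_a^p\subseteq V_{pa}$; it contradicts it. Under $V_a=D_{(a+1)}$ it reads $V_a^p\subseteq V_{pa+p-1}\subseteq V_{pa+1}$. That would make every restricted $p$-map $E_a\to E_{pa}$ zero and leave no POWER edges; compare Lemma~\ref{lem:general-p-routing-generation}, where the image of $V_r^p$ in $E_{pr}$ is $P_r(E_r)$.

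Two examples show the failure. If $G$ is abelian of exponent $p^2$, then $\F_pG$ is commutative, so $D_{(n)}=1$ for $n\geq2$, while $D_{(1)}^p=G^p\neq1$. For the case you actually use (Step~1 with $m=1$), let $G$ be the group of upper unitriangular $3\times3$ matrices over $\mathbb{Z}/p^2\mathbb{Z}$, and let $c$ generate $G'=Z(G)\cong\mathbb{Z}/p^2\mathbb{Z}$. The Passi--Sehgal formula $D_{(n)}=\prod_{(i-1)p^j\geq n-1}\gamma_i(G)^{p^j}$ gives $c\in D_{(2)}$ and $D_{(2p)}=\gamma_2(G)^{p^2}=1$, yet $c^p\neq1$. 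The form consistent with the shifted rule the paper actually uses is $D_{(r)}^p\subseteq D_{(p(r-1)+1)}$.

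With the correct rule, Step~2 survives: $[v,g]\in V_{m+1}$, so $[v,g]^p\in V_{pm+p}\subseteq V_{pm+2}$. Step~1, however, collapses to exactly the estimate you flagged as insufficient: $v^p\in V_{pm}$, hence only $[v^p,g]\in V_{pm+1}$. Since $[v,g]^p$ already lies in $V_{pm+2}$, the lemma is equivalent to $[v^p,g]\in V_{pm+2}$. That missing step is the entire content of the lemma. It cannot be obtained by first locating $v^p$ in the filtration and then commuting with $g$: in the example above, $c^p\in V_p\setminus V_{p+1}$, yet $[c^p,g]=1$.

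The extra depth comes from the $+1$ carried by every commutator, so you must expand before estimating. With $c=[v,g]$, Hall--Petrescu gives
\[ [v^p,g]=v^{-p}(vc)^p=c^{v^{p-1}}\cdots c^{v}c=c^p\prod_{i=2}^{p}c_i^{\binom{p}{i}},\qquad c_i\in\gamma_i(\langle v,c\rangle). \]
The group $\gamma_i(\langle v,c\rangle)$ is generated by commutators of weight at least $i$ in $v^{\pm1},c^{\pm1}$. Each nontrivial one involves $c^{\pm1}$, so $c_i\in V_{i(m+1)}$. For $2\leq i\leq p-1$ one has $p\mid\binom{p}{i}$, which puts $c_i^{\binom{p}{i}}$ in $V_{pi(m+1)}$, and $c_p\in V_{p(m+1)}$. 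All these factors lie in $V_{pm+2}$; this is the paper's argument.

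The same computation even yields $[v^p,g]\in V_{pm+p}$, so your Step~1 conclusion is true, but not for the reason you give. Your closing remark about $\operatorname{ad}(a^p)=\operatorname{ad}(a)^p$ concerns a different reading of the bracket and leaves the needed containments unspecified, so it does not close the gap either.
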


\begin{proof}
Put $c=[v,g]\in V_{m+1}$. Repeated use of the commutator identity $[ab,g]=[a,g]^b[b,g]$ gives $[v^p,g] = c^{v^{p-1}}c^{v^{p-2}}\cdots c^vc$. Applying the Hall Petrescu collection formula
\cite{Hall1934,Dixonetall1999} to this product, the principal factor is \(c^p\). Every remaining collected factor is a basic commutator in \(c\) and \(v\). If such a commutator has weight \(k\), with \(2\leq k\leq p-1\), then its exponent in the collection is divisible by \(p\); this is the standard binomial-divisibility consequence of Hall Petrescu collection, since
\[
    p\mid \binom{p}{j},
    \qquad 1\leq j\leq k<p.
\]
The remaining collected commutators have weight at least \(p\). Let \(h\) be a nonprincipal basic commutator of weight \(k\geq2\). Since every such \(h\) contains at least one occurrence of \(c\in V_{m+1}\), while every remaining entry lies in \(V_m\), repeated use of $[V_a,V_b]\subseteq V_{a+b+1}$ gives $h\in V_{k(m+1)}$. Suppose first that \(2\leq k\leq p-1\). If \(e_h\) is the exponent of \(h\) in the Hall Petrescu collection, then \(p\mid e_h\), say \(e_h=pq\). Hence $h^{e_h}=(h^q)^p$. Since \(h^q\in V_{k(m+1)}\), the \(p\)-power containment gives $h^{e_h} \in V_{pk(m+1)} \subseteq V_{pm+2}$. If \(k\geq p\), then already
\[
        h
        \in
        V_{k(m+1)}
        \subseteq
        V_{p(m+1)}
        =
        V_{pm+p}
        \subseteq
        V_{pm+2},
\]
and hence \(h^{e_h}\in V_{pm+2}\). Thus every nonprincipal factor in the Hall Petrescu collection lies in \(V_{pm+2}\). Therefore $[v^p,g] \equiv c^p \pmod{V_{pm+2}}$. Since \(c=[v,g]\), we obtain $[v^p,g] \equiv [v,g]^p \pmod{V_{pm+2}}$.
\end{proof}

\begin{definition}
\label{def:general-p-transport}
For \(m\geq1\) and \(g\in G\), the \emph{transport map} $D_g:E_m\longrightarrow E_{m+1}$ is defined by
$D_g(vV_{m+1})=[v,g]V_{m+2}$, $v\in V_m$. We refer to the operation induced by \(D_g\) as a \emph{PUSH}.
\end{definition}

\begin{lemma}
\label{lem:general-p-transport-well-defined}
For every \(m\geq1\) and \(g\in G\), the transport map $D_g:E_m\longrightarrow E_{m+1}$, $D_g(vV_{m+1})=[v,g]V_{m+2}$, is a well-defined \(\F_p\)-linear map. Moreover,
$D_g(E_m)\subseteq E_{m+1}$, so \(D_g\) has homogeneous weight degree \(+1\).

\end{lemma}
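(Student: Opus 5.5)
To prove Lemma~\ref{lem:general-p-transport-well-defined}, I will work directly with group commutators in \(G\). Here \(V_m=D_{(m+1)}\) is a descending chain of normal subgroups, each \(E_m=V_m/V_{m+1}\) is elementary abelian and written additively, and the containments \([V_a,V_b]\subseteq V_{a+b+1}\) and \(V_a^p\subseteq V_{pa}\) are the only input I intend to use. The plan has three parts: show that \([v,g]\) lands in the right layer, show the class of \([v,g]\) depends only on \(vV_{m+1}\), and show additivity. \(\F_p\)-linearity then follows automatically, since every additive map between elementary abelian \(p\)-groups is \(\F_p\)-linear.

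\emph{Step 1: the target layer.} Note that \(g\in G=D_{(1)}=V_0\). Applying the containment with \(a=m\) and \(b=0\) gives \([v,g]\in[V_m,V_0]\subseteq V_{m+1}\), so \([v,g]V_{m+2}\) is an element of \(E_{m+1}\). This already gives the degree assertion \(D_g(E_m)\subseteq E_{m+1}\). The containment is being used at index \(0\), so I will check that it is valid there. It is, since \(D_{(1)}=G\) and \([D_{(1)},D_{(m+1)}]\subseteq D_{(m+2)}\) is the standard containment in this case.

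\emph{Step 2: independence of the representative and additivity.} Both properties reduce to one identity, \([uv,g]=[u,g]^v[v,g]\), together with \([u,g]^v=[u,g]\,[[u,g],v]\).
\begin{itemize}
\item For additivity, take \(u,v\in V_m\). Then \([[u,g],v]\in[V_{m+1},V_m]\subseteq V_{2m+2}\subseteq V_{m+2}\), so \([uv,g]\equiv[u,g][v,g]\pmod{V_{m+2}}\).
\item For well-definedness, take \(u\in V_{m+1}\) instead. Then \([u,g]\in V_{m+2}\) by Step 1 applied at level \(m+1\), so \([uv,g]\equiv[v,g]\pmod{V_{m+2}}\). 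This means \(D_g\) does not depend on the coset representative.
\end{itemize}
Since \(E_{m+1}\) is abelian, writing it additively turns these congruences into \(D_g(x+y)=D_g(x)+D_g(y)\). Additivity in turn gives \(D_g(kx)=kD_g(x)\) for integers \(k\), hence for all scalars in \(\F_p\).

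The main point requiring care is the index bookkeeping in the shifted filtration, specifically the base case \(b=0\) in Step 1, which I have handled above. Everything else is routine commutator calculus. Lemma~\ref{lem:general-p-commutator-power-congruence} is not needed here; it will only become relevant when relating \(D_g\) to the restricted \(p\)-map.
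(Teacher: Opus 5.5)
Your proof is correct and follows the paper's route. The identity $[uv,g]=[u,g]^v[v,g]$ gives independence of the representative, using $[V_{m+1},G]\subseteq V_{m+2}$ and normality of $V_{m+2}$, and it gives additivity, using $[V_{m+1},V_m]\subseteq V_{m+2}$; $\F_p$-linearity is then automatic for maps between elementary abelian $p$-groups, and the degree claim comes from $[V_m,G]\subseteq V_{m+1}$, which you correctly justify as the standard containment $[D_{(m+1)},D_{(1)}]\subseteq D_{(m+2)}$ with $V_0=D_{(1)}=G$.
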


\begin{proof}
Recall that $E_m=V_m/V_{m+1}$. Let \(v,w\in V_m\) represent the same element of \(E_m\). Then $vV_{m+1}=wV_{m+1}$, so $vw^{-1}\in V_{m+1}$. Write $v=uw$, $u\in V_{m+1}$. Using the commutator identity $[ab,g]=[a,g]^b[b,g]$, we obtain $[v,g]=[uw,g]=[u,g]^w[w,g]$. Since \(u\in V_{m+1}\), the standard filtration containment $[V_{m+1},G]\subseteq V_{m+2}$ gives
$[u,g]\in V_{m+2}$. Because \(V_{m+2}\) is normal in \(G\), also $[u,g]^w\in V_{m+2}$. Hence $[v,g][w,g]^{-1}\in V_{m+2}$, and therefore $[v,g]V_{m+2} = [w,g]V_{m+2}$.
Thus \(D_g\) is independent of the chosen representative of \(vV_{m+1}\), and so is well defined. Let \(v,w\in V_m\). Using $[ab,g]=[a,g]^b[b,g]$, we have
$[vw,g]=[v,g]^w[w,g]$. Since $[v,g]\in V_{m+1}$ and
$[[v,g],w]\in [V_{m+1},V_m]\subseteq V_{m+2}$, we have
$[v,g]^w\equiv[v,g]\pmod{V_{m+2}}$. Hence
\[
[vw,g]V_{m+2}
=
[v,g][w,g]V_{m+2}.
\]
Thus \(D_g\) is a homomorphism between the elementary abelian \(p\)-groups
\(E_m\) and \(E_{m+1}\), and therefore is \(\F_p\)-linear. Finally, if \(v\in V_m\), then $[v,g]\in [V_m,G]\subseteq V_{m+1}$.
Therefore
\[
D_g(vV_{m+1})
=
[v,g]V_{m+2}
\in
V_{m+1}/V_{m+2}
=
E_{m+1}.
\]
Thus $D_g(E_m)\subseteq E_{m+1}$, so \(D_g\) has homogeneous weight degree \(+1\).
\end{proof}

\begin{lemma}
\label{lem:general-p-transport-power-zero}
Let $x\in E_m$ be homogeneous and let \(g\in G\). Then $D_g(x^{[p]})=0$ in $E_{pm+1}$.
\end{lemma}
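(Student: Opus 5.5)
The plan is to compute \(D_g(x^{[p]})\) at the level of representatives, using Lemma~\ref{lem:general-p-commutator-power-congruence} together with the \(p\)-power containment \(V_a^p\subseteq V_{pa}\).

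First, I fix the meaning of \(x^{[p]}\). Write \(x=vV_{m+1}\) with \(v\in V_m\). The restricted \(p\)-map on the graded restricted Lie algebra \(E\) is induced by the group \(p\)-th power, so \(x^{[p]}=v^pV_{pm+1}\in E_{pm}\). Here \(v^p\in V_{pm}\) by \(V_m^p\subseteq V_{pm}\). I take well-definedness of this class as part of the standing structure of \(E\) recorded in the introduction. Then Lemma~\ref{lem:general-p-transport-well-defined}, applied in degree \(pm\), shows that \(D_g(x^{[p]})\) is a well-defined element of \(E_{pm+1}\), given by
\[
    D_g(x^{[p]})=[v^p,g]\,V_{pm+2},
\]
for any representative \(v\) of \(x\).

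Second, I apply Lemma~\ref{lem:general-p-commutator-power-congruence} to obtain \([v^p,g]\equiv[v,g]^p \pmod{V_{pm+2}}\). This gives
\[
    D_g(x^{[p]})=[v,g]^p\,V_{pm+2}.
\]
Third, I note that \([v,g]\in[V_m,G]\subseteq V_{m+1}\). The \(p\)-power containment then gives
\[
    [v,g]^p\in V_{p(m+1)}=V_{pm+p}.
\]
Since \(p\geq2\), we have \(pm+p\geq pm+2\), so \(V_{pm+p}\subseteq V_{pm+2}\). Hence \([v,g]^p\in V_{pm+2}\), and therefore \(D_g(x^{[p]})=0\) in \(E_{pm+1}\).

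The case where \(x^{[p]}\) is already the zero class, that is \(v^p\in V_{pm+1}\), needs no separate treatment: the computation above shows the image vanishes for any representative. It is also consistent with the linearity of \(D_g\).

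The only real input is Lemma~\ref{lem:general-p-commutator-power-congruence}, which is already available, so I expect no serious obstacle. The one point requiring care is bookkeeping. I need to confirm that the target degree of \(D_g\) on \(E_{pm}\) is \(pm+1\), so that the relevant modulus is exactly \(V_{pm+2}\), matching the congruence lemma. I also need to check that the bound \(pm+p\geq pm+2\) holds uniformly, including \(p=2\), where it is an equality.
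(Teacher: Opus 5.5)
Your proposal is correct and follows the paper's proof essentially step for step. Like the paper, you represent $x^{[p]}$ by $v^p$ and use Lemma~\ref{lem:general-p-commutator-power-congruence} to replace $[v^p,g]$ by $[v,g]^p$ modulo $V_{pm+2}$, then conclude from $[v,g]^p\in V_{p(m+1)}=V_{pm+p}\subseteq V_{pm+2}$ because $p\geq 2$.
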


\begin{proof}
Let \(x\in E_m\), and choose \(v\in V_m\) representing \(x\). Then \(x^{[p]}\in E_{pm}\) is represented by \(v^p\in V_{pm}\). Hence $D_g(x^{[p]}) = [v^p,g]V_{pm+2} \in E_{pm+1}$. By Lemma~\ref{lem:general-p-commutator-power-congruence}, $[v^p,g] \equiv [v,g]^p \pmod{V_{pm+2}}$. Put \(c=[v,g]\in V_{m+1}\). Then $c^p\in V_{p(m+1)}$. Since \(p\geq2\),
$p(m+1)=pm+p\geq pm+2$, and hence $V_{p(m+1)} \subseteq V_{pm+2}$, since \(V_{i+1}\subseteq V_i\) for all \(i\). Therefore $[v,g]^p=c^p\in V_{pm+2}$. It follows that
$[v^p,g] \equiv 1 \pmod{V_{pm+2}}$, so $[v^p,g]\in V_{pm+2}$. Therefore its image in $E_{pm+1}=V_{pm+1}/V_{pm+2}$ is zero. Thus $D_g(x^{[p]})=0$.
\end{proof}

\begin{proposition}
\label{prop:general-p-push-derivation}
For every \(g\in G\), the degree-one map $D_g:E\longrightarrow E$ extends uniquely to a degree-one derivation $\widetilde D_g:T(E)\longrightarrow T(E)$. Moreover, \(\widetilde D_g\) induces a unique degree-one derivation $\overline D_g:u(E)\longrightarrow u(E)$. Thus $\overline D_g(ab)=\overline D_g(a)b+a\overline D_g(b)$
for all $a,b\in u(E)$.
\end{proposition}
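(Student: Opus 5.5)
The plan is to use the universal properties of $T(E)$ and $u(E)=T(E)/I$. We check that the derivation $\widetilde D_g$ preserves the defining ideal $I$. Because $E$ is abelian, $I$ is the two-sided ideal generated by
\[
x\otimes y-y\otimes x \quad (x,y\in E)
\qquad\text{and}\qquad
x^{\otimes p}-x^{[p]} \quad (x\in E).
\]

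First I extend $D_g$ to all of $E=\bigoplus_m E_m$ additively, using Lemma~\ref{lem:general-p-transport-well-defined} on each homogeneous component. This gives an $\F_p$-linear map $E\to E$ of degree $+1$. Any linear map $\delta:E\to T(E)$ extends uniquely to a derivation of $T(E)$ by the Leibniz rule on pure tensors:
\[
\widetilde D_g(x_1\otimes\cdots\otimes x_k)=\sum_{i}x_1\otimes\cdots\otimes D_g(x_i)\otimes\cdots\otimes x_k .
\]
The map is well defined because the right-hand side is multilinear in the $x_i$. It is unique because $E$ generates $T(E)$ as an algebra. It has degree one because each summand raises the weight by exactly $1$.

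Next I use the Leibniz rule
\[
\widetilde D_g(arb)=\widetilde D_g(a)\,r\,b+a\,\widetilde D_g(r)\,b+a\,r\,\widetilde D_g(b).
\]
It shows that $\widetilde D_g(I)\subseteq I$ as soon as $\widetilde D_g$ sends each generator of $I$ into $I$. Let $J\subseteq I$ be the ideal generated by the commutators $xy-yx$. For these generators,
\[
\widetilde D_g(xy-yx)=(D_gx\,y-y\,D_gx)+(x\,D_gy-D_gy\,x)\in J.
\]
For the restricted generators, I first compute modulo $J$:
\[
\widetilde D_g(x^{\otimes p})=\sum_{i=0}^{p-1}x^{i}\,D_g(x)\,x^{p-1-i}\equiv p\,x^{p-1}D_g(x)=0 \pmod J,
\]
which vanishes because we are in characteristic $p$. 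It remains to show $D_g(x^{[p]})=0$ for every $x\in E$, not only homogeneous ones.

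For homogeneous $x$ this is Lemma~\ref{lem:general-p-transport-power-zero}. For a general $x=\sum_m x_m$, I use that $E$ is abelian restricted. Jacobson's formula then gives $(a+b)^{[p]}=a^{[p]}+b^{[p]}$, because all the Lie correction terms vanish. Hence
\[
x^{[p]}=\sum_m x_m^{[p]},\qquad D_g(x^{[p]})=\sum_m D_g\bigl(x_m^{[p]}\bigr)=0 .
\]
Thus $\widetilde D_g(x^{\otimes p}-x^{[p]})\in J\subseteq I$. Consequently $\widetilde D_g$ descends to a map $\overline D_g$ on $u(E)$. It is a derivation of degree one, because the quotient map is a graded algebra homomorphism. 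It is unique because the image of $E$ generates $u(E)$ as an algebra.

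The main obstacle is the restricted relation $x^{\otimes p}-x^{[p]}$. Two things are needed there: the characteristic-$p$ cancellation in $\widetilde D_g(x^{\otimes p})$, which works only modulo the commutator ideal $J$, and the vanishing $D_g(x^{[p]})=0$. The latter rests on the group-theoretic congruence of Lemma~\ref{lem:general-p-commutator-power-congruence}, already packaged as Lemma~\ref{lem:general-p-transport-power-zero}, together with the additivity of the $p$-map on the abelian algebra $E$ to pass to nonhomogeneous elements.
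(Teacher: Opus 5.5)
Your proposal is correct and follows essentially the same route as the paper. You extend $D_g$ to a derivation of $T(E)$, show that the commutator generators go into the commutator ideal, and show that $\widetilde D_g(x^{\otimes p}-x^{[p]})$ lies there too, using $p\,x^{p-1}D_g(x)=0$ modulo commutativity and $D_g(x^{[p]})=0$ from Lemma~\ref{lem:general-p-transport-power-zero} together with additivity of the $p$-map on the abelian algebra $E$; you then descend to $u(E)$. Your write-up is only slightly more explicit than the paper's, namely on why checking generators suffices and why the tensor-algebra extension is well defined.
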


\begin{proof}
Let $T(E)$ be the tensor algebra on \(E\). The linear map $D_g:E\longrightarrow E$ extends uniquely to a derivation $\widetilde D_g:T(E)\longrightarrow T(E)$. We verify that
\(\widetilde D_g\) preserves the defining ideal of the restricted enveloping algebra. Since \(E\) is abelian, the ordinary Lie relations are $xy-yx=0$. Applying \(\widetilde D_g\) gives
\[
\widetilde D_g(xy-yx)
=
D_g(x)y+xD_g(y)-D_g(y)x-yD_g(x),
\]
which vanishes modulo the commutativity relations. Now consider the restricted relation $x^p-x^{[p]}$. Modulo
commutativity,
\[
\widetilde D_g(x^p)
=
\sum_{j=0}^{p-1}x^jD_g(x)x^{p-1-j}
=
p\,x^{p-1}D_g(x)
=
0
\]
in characteristic \(p\). Write $x=\sum_m x_m$, where each
$x_m\in E_m$ is homogeneous. Since \(E\) is abelian as a restricted
Lie algebra, the restricted \(p\)-map is additive, and hence
\[
x^{[p]}=\sum_m x_m^{[p]}.
\]
Therefore, by Lemma~\ref{lem:general-p-transport-power-zero},
\[
D_g(x^{[p]})
=
\sum_m D_g(x_m^{[p]})
=
0.
\]
Hence $\widetilde D_g\left(x^p-x^{[p]}\right)=0$ modulo the defining ideal. Thus \(\widetilde D_g\) descends to a
derivation $\overline D_g:u(E)\longrightarrow u(E)$. Since \(u(E)\) is generated as an algebra by \(E\), this extension is
unique. Finally, $D_g(E_m)\subseteq E_{m+1}$, so the resulting
derivation \(\overline D_g\) has homogeneous weight degree \(+1\).
\end{proof}

\begin{remark}
The derivation $\overline D_g:u(E)\longrightarrow u(E)$ restricts to the
original transport map $D_g:E\longrightarrow E$. Henceforth we write \(D_g\)
for both maps; the intended meaning is determined by whether the argument
lies in \(E\) or in \(u(E)\).
\end{remark}

We now organize the homogeneous directions of \(E\) according to the two
operations available in the graded restricted Lie algebra: transport \(D_g\)
and restricted \(p\)-POWER.

\begin{definition}
\label{def:general-p-routing-compatible-ancestry}

Let $0\neq u\in E_m$ be homogeneous. A \emph{routing-compatible ancestry} for \(u\) is a finite chain
\[
y_0 \longmapsto y_1 \longmapsto \cdots \longmapsto y_s=u
\]
such that $y_0\in E_1$ and every edge is of one of the two forms $y_{j+1}=D_{g_j}(y_j)$
for some \(g_j\in G\), or $y_{j+1}=y_j^{[p]}$. The first type is called a \emph{PUSH edge}; the second is called a
\emph{POWER edge}.
\end{definition}

\begin{lemma}
\label{lem:general-p-routing-generation}
Let
\[
    U
    \subseteq
    E=\bigoplus_{m\geq1}E_m
\]
be the smallest graded \(\F_p\)-subspace containing \(E_1\) and closed under every transport map $D_g:E_m\longrightarrow E_{m+1}$, $g\in G$, and under restricted POWER
$P_m:E_m\longrightarrow E_{pm}$, $P_m(x)=x^{[p]}$. Then $U=E$. More precisely, for every \(m\geq2\),
\[
    E_m
    =
    \sum_{g\in G}D_g(E_{m-1})
    +
    \begin{cases}
    P_{m/p}(E_{m/p}), & p\mid m,\\[2mm]
    0, & p\nmid m.
    \end{cases}
\]
Consequently every homogeneous component of \(E\) is spanned by
directions obtained from \(E_1\) by finitely many PUSH and restricted
POWER operations.
\end{lemma}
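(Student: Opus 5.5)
The plan is to deduce the displayed decomposition of \(E_m\) from the classical recursive description of the Lie dimension subgroups, and then to obtain \(U=E\) by induction on \(m\). In the unshifted indexing, Passi and Sehgal \cite{PassiSehgal1975} and Shalev \cite{Shalev1991} give, for \(n\ge 2\),
\[
    D_{(n),K}(G)=[D_{(n-1),K}(G),G]\,D_{(\lceil (n-1)/p\rceil+1),K}(G)^{p}.
\]
Equivalently, \(D_{(n),K}(G)\) is the product of the subgroups \(\gamma_i(G)^{p^j}\) with \((i-1)p^j\geq n-1\). Passing to the shifted filtration \(V_m=D_{(m+1)}\), this reads
\[
    V_m=[V_{m-1},G]\,V_{\lceil m/p\rceil}^{\,p}, \qquad m\geq 2.
\]
I would quote this as the standard input; it is the only place where information about \(G\) beyond the containments of Section~\ref{sec:introduction} is used.

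Next I reduce this identity modulo \(V_{m+1}\), treating the two factors in turn.

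For the commutator factor, \([V_{m-1},G]\) is generated by the elements \([v,g]^h\) with \(v\in V_{m-1}\) and \(g,h\in G\). Since \([[v,g],h]\in[V_m,G]\subseteq V_{m+1}\), each conjugate satisfies \([v,g]^h\equiv[v,g]\pmod{V_{m+1}}\). Hence the image of \([V_{m-1},G]\) in \(E_m\) is the subgroup generated by the classes \([v,g]V_{m+1}=D_g(vV_m)\). By Lemma~\ref{lem:general-p-transport-well-defined} each \(D_g\) is \(\F_p\)-linear, so this image is the subspace \(\sum_{g}D_g(E_{m-1})\).

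For the power factor, put \(k=\lceil m/p\rceil\).
\begin{itemize}[nosep]
\item If \(p\nmid m\), then \(pk\geq m+1\), so \(V_k^{\,p}\subseteq V_{pk}\subseteq V_{m+1}\) and the factor contributes \(0\).
\item If \(p\mid m\), then \(k=m/p\), and the subgroup \(V_k^{\,p}\) is generated by the powers \(v^p\) with \(v\in V_k\). Each \(v^p\) maps to \(P_k(vV_{k+1})\), so the image of \(V_k^{\,p}\) in \(E_m\) is the subgroup generated by \(P_k(E_k)\). Because \(E\) is abelian, the restricted \(p\)-map is additive (as already used in Proposition~\ref{prop:general-p-push-derivation}), and it is \(p\)-semilinear, hence \(\F_p\)-linear. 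Therefore \(P_k(E_k)\) is already a subspace.
\end{itemize}
Adding the two contributions gives exactly the displayed formula for \(E_m\).

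Finally, \(U=E\) follows by induction on \(m\). The base case is \(E_1\subseteq U\) by definition. If \(E_j\subseteq U\) for all \(j<m\), then each summand \(D_g(E_{m-1})\) and, when \(p\mid m\), the summand \(P_{m/p}(E_{m/p})\) lies in \(U\), by the closure of \(U\) under PUSH and POWER. Hence \(E_m\subseteq U\). Unwinding the induction expresses every homogeneous direction as an \(\F_p\)-combination of directions reached from \(E_1\) by finite PUSH/POWER chains, as in Definition~\ref{def:general-p-routing-compatible-ancestry}.

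The main obstacle is the recursion for \(V_m\) itself, namely that the two factors exhaust \(V_m\) and not merely that they are contained in it. If a self-contained argument were required rather than a citation, I would first try to derive it from Jennings' description of the dimension subgroups, using the identity \(\sum_{(i-1)p^j\geq n-1}\gamma_i^{p^j}\). That step needs Hall–Petrescu collection, as in Lemma~\ref{lem:general-p-commutator-power-congruence}, to show that \(p\)-th powers of products and commutators of \(p\)-th powers fall into the correct deeper terms. The rest of the proof is routine bookkeeping modulo \(V_{m+1}\).
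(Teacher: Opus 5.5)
Your proposal is correct and follows essentially the same route as the paper. Both quote the recursion $V_m=[V_{m-1},G]\,V_{\lceil m/p\rceil}^{\,p}$, compute the images of the two factors in $E_m=V_m/V_{m+1}$, and then induct on weight. Your extra remarks, that conjugates of $[v,g]$ agree modulo $V_{m+1}$ and that $P_k$ is $\F_p$-linear so $P_k(E_k)$ is already a subspace, fill in small details the paper leaves implicit.
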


\begin{proof}
Recall that $V_m=D_{(m+1)}(G)$, $E_m=V_m/V_{m+1}$. The standard recursion for the upper Lie dimension subgroups (\cite{Passi1979}) is $D_{(m+1)}(G) = [D_{(m)}(G),G]\, D_{(\lceil m/p\rceil+1)}(G)^p$, $m\geq2$. In the shifted notation this becomes $V_m = [V_{m-1},G]\, V_{\lceil m/p\rceil}^{\,p}$. Put $r=\left\lceil\frac{m}{p}\right\rceil$. Pass to the quotient
$E_m=V_m/V_{m+1}$. Since \(E_m\) is elementary abelian, the image of the product on the right is the sum of the images of its two factors. The image of \([V_{m-1},G]\) is
$\sum_{g\in G}D_g(E_{m-1})$, because, for \(v\in V_{m-1}\), $D_g(vV_m) = [v,g]V_{m+1}$. It remains to determine the image of \(V_r^p\). By the standard filtration containment
$V_r^p\subseteq V_{pr}$. Suppose first that \(p\nmid m\). Then $pr = p\left\lceil\frac{m}{p}\right\rceil >m$, and hence, since the indices are integral,
$pr\geq m+1$. Therefore $V_r^p \subseteq V_{pr} \subseteq V_{m+1}$, so the \(p\)-power factor has zero image in \(E_m\). Now suppose that \(p\mid m\), say \(m=pr\). Then
$V_r^p\subseteq V_{pr}=V_m$, and its image in \(V_m/V_{m+1}\) is precisely the restricted
POWER image $P_r(E_r)=P_{m/p}(E_{m/p})$.
Thus
\[
    E_m
    =
    \sum_{g\in G}D_g(E_{m-1})
    +
    \begin{cases}
    P_{m/p}(E_{m/p}), & p\mid m,\\[2mm]
    0, & p\nmid m.
    \end{cases}
\]

We now prove \(U=E\) by induction on homogeneous weight. By definition, $E_1\subseteq U$. Let \(m\geq2\), and suppose inductively that $E_j\subseteq U$ $(1\leq j<m)$. Since \(m-1<m\), we have \(E_{m-1}\subseteq U\). If \(p\mid m\), then \(m/p<m\), so also \(E_{m/p}\subseteq U\). Because \(U\) is closed under every \(D_g\) and under restricted POWER, the displayed decomposition of \(E_m\) gives $E_m\subseteq U$. Hence \(E_m\subseteq U\) for every \(m\geq1\), and therefore $U=E$.
\end{proof}

\begin{lemma}
\label{lem:general-p-fixed-routing-basis}
There exists an ordered homogeneous basis
$\mathcal B_{\mathrm{rt}}=(b_1,\ldots,b_d)$ of \(E\) with the following properties.

\begin{enumerate}
\item The basis elements lying in \(E_1\) may be chosen to be images of genuine group commutators.
\item For every basis element \(b_j\notin E_1\), there exists an earlier basis element \(b_i\), \(i<j\), such that either $b_j=D_g(b_i)$ for some \(g\in G\), or $b_j=b_i^{[p]}$.
\item Consequently every \(b_j\) has a stored routing-compatible ancestry all of whose vertices belong to the single fixed basis \(\mathcal B_{\mathrm{rt}}\).
\end{enumerate}
\end{lemma}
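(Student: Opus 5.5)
We will build the basis degree by degree, in increasing homogeneous weight. The ordering of $\mathcal B_{\mathrm{rt}}$ will list all basis elements of $E_1$ first, then those of $E_2$, and so on. With this ordering, ``earlier'' in item 2 is automatic once every new direction in $E_m$ is obtained from a direction of strictly smaller weight. The input is the decomposition of Lemma~\ref{lem:general-p-routing-generation},
\[
E_m=\sum_{g\in G}D_g(E_{m-1})+\bigl(P_{m/p}(E_{m/p})\text{ if }p\mid m\bigr),
\]
and the plan is to upgrade this statement about subspaces into one about images of individual, already chosen basis vectors.

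\emph{Degree one.} We have $V_1=D_{(2)}(G)$, and the standard identification of the second Lie dimension subgroup gives $D_{(2)}(G)=G'$. Hence $V_1$ is generated by group commutators $[g,h]$. Since $E_1=V_1/V_2$ is elementary abelian, it is spanned by the images of such commutators. Because $G$ is finite, this is a finite spanning set, and we extract an $\F_p$-basis of $E_1$ from it. This gives item 1.

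\emph{Inductive step.} Suppose $b_1,\dots,b_k$ is already a basis of $E_1\oplus\cdots\oplus E_{m-1}$ satisfying item 2. Let $b_{i_1},\dots,b_{i_r}$ be the basis vectors lying in $E_{m-1}$. By Lemma~\ref{lem:general-p-transport-well-defined}, each $D_g$ is $\F_p$-linear, so $D_g(E_{m-1})$ is spanned by the vectors $D_g(b_{i_t})$.

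For the POWER term we need linearity of $P_r$ on $E_r$. Additivity was already noted in the proof of Proposition~\ref{prop:general-p-push-derivation}: $E$ is abelian, so the $p$-map is additive. For scalars, $(\lambda x)^{[p]}=\lambda^p x^{[p]}=\lambda x^{[p]}$ for $\lambda\in\F_p$. So $P_r$ is $\F_p$-linear, and $P_{m/p}(E_{m/p})$ is spanned by the $b_i^{[p]}$ with $b_i\in E_{m/p}$; these $b_i$ are earlier since $m/p<m$.

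Therefore $E_m$ is spanned by the finite set
\[
S_m=\{D_g(b_i):b_i\in E_{m-1},\,g\in G\}\cup\{b_i^{[p]}:b_i\in E_{m/p}\},
\]
finite because $G$ is finite. We choose a basis of $E_m$ from $S_m$ and append it to the list. Each new vector is literally of the form $D_g(b_i)$ or $b_i^{[p]}$ with $i$ earlier, which gives item 2. The process stops because $E$ is finite dimensional.

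\emph{Item 3.} This follows by induction on the index $j$. If $b_j\in E_1$, the trivial chain of length zero is an ancestry. Otherwise item 2 gives an earlier $b_i$ and an edge $b_i\mapsto b_j$ of PUSH or POWER type. Prepending the stored ancestry of $b_i$, whose vertices all lie in $\mathcal B_{\mathrm{rt}}$ by induction, yields a routing-compatible ancestry of $b_j$ with every vertex in $\mathcal B_{\mathrm{rt}}$.

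\emph{Expected obstacle.} The only delicate points are upgrading the subspace decomposition to spanning by images of basis vectors, and the identification $D_{(2)}(G)=G'$ used in item 1. For the former I would first check carefully that $P_r$ is $\F_p$-linear on $E_r$. For the latter I would cite the standard fact; failing that, it suffices to take as degree-one basis any elements of $V_1$ whose images span $E_1$, after verifying that the recursion of Lemma~\ref{lem:general-p-routing-generation} shows these can be taken to be commutators.
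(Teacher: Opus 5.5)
Your proof is correct and follows essentially the paper's argument. Both rest on Lemma~\ref{lem:general-p-routing-generation} together with the $\F_p$-linearity of $D_g$ and of the $p$-map, which lets each new basis vector be chosen among the images $D_g(b_i)$ or $b_i^{[p]}$ of earlier ones, followed by tracing parents backwards for item 3. The only difference is bookkeeping: you fill $E_m$ degree by degree from the explicit spanning set $S_m$, so ``earlier'' is automatic from the weight ordering, whereas the paper greedily adjoins any such image outside the current span and uses only the closure statement $U=E$.
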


\begin{proof}
Choose a homogeneous basis of \(E_1\), consisting of images of genuine group commutators, and suppose that homogeneous vectors \(b_1,\ldots,b_s\) have been chosen linearly independently. Put
$U_s:=\Span_{\F_p}\{b_1,\ldots,b_s\}$. If \(U_s\neq E\), then Lemma~\ref{lem:general-p-routing-generation} implies that \(U_s\) is not closed under all \(D_g\) and restricted
POWER. Hence, for some homogeneous \(x\in U_s\), $D_g(x)\notin U_s$ or $x^{[p]}\notin U_s$. Writing \(x=\sum_i\lambda_i b_i\) in its homogeneous component, and using the \(\F_p\)-linearity of \(D_g\) and of \(x\mapsto x^{[p]}\), there is some \(i\leq s\) such that $D_g(b_i)\notin U_s$ or $b_i^{[p]}\notin U_s$, respectively. Adjoin this vector as \(b_{s+1}\). Since \(E\) is finite-dimensional, iteration terminates with a homogeneous basis $\mathcal B_{\mathrm{rt}}=(b_1,\ldots,b_d)$ having properties \textup{(1)} and \textup{(2)}. Tracing the parent of
each \(b_j\notin E_1\) backwards gives a finite routing-compatible ancestry in \(\mathcal B_{\mathrm{rt}}\), proving \textup{(3)}.
\end{proof}

\begin{remark}
\label{conv:general-p-fixed-routing-pbw-basis}
For the remainder of the construction, fix the routing basis
\(\mathcal B_{\mathrm{rt}}=(b_1,\ldots,b_d)\). All restricted PBW
coefficients, occupancy vectors, certified states, and protected exponent
profiles are taken relative to this single basis. No change of complementary
PBW basis is made during the construction.
\end{remark}

\begin{definition}
\label{def:general-p-stored-provenance}
For each routing-basis direction \(b_j\), fix the routing-compatible ancestry supplied by Lemma~\ref{lem:general-p-fixed-routing-basis}. For each PUSH edge $b_j=D_g(b_i)$, record the parent \(b_i\) and the control \(g\). For each POWER edge $b_j=b_i^{[p]}$, record the parent \(b_i\). These fixed parent data are called the \emph{stored provenance} of the routing basis.
\end{definition}

\begin{definition}
\label{def:general-p-ancestry-admissible-state}
A completed certified restricted PBW state
$\nu=(\nu_1,\ldots,\nu_d)$ is called \emph{ancestry-admissible} if it
is taken relative to the fixed routing basis
$\mathcal B_{\mathrm{rt}}=(b_1,\ldots,b_d)$. In this case every
occupied direction \(b_j\), that is every \(b_j\) with \(\nu_j>0\),
carries the stored provenance fixed in
Definition~\ref{def:general-p-stored-provenance}. Thus every completed
certified restricted PBW state relative to
\(\mathcal B_{\mathrm{rt}}\) is ancestry-admissible.
\end{definition}

\begin{definition}
\label{def:general-p-primitive-create-direction}

The primitive layer is $E_1=G'/D_{(3)}$. Since \(E_1\) is spanned by
images of genuine group commutators, choose the basis of \(E_1\) from
such commutator directions. A routing-basis direction \(b_j\in E_1\)
is called a \emph{primitive CREATE direction} if there exist \(a,b\in G\)
such that the image of \([a,b]\) in \(E_1=G'/D_{(3)}\) is \(b_j\).
A \emph{genuine primitive CREATE} in the direction \(b_j\) is the
chronological operation obtained from this actual group commutator
representative.
\end{definition}

We have therefore fixed both the routing basis and the stored provenance
along which its directions are reached. This determines where the later
chronological constructions must route. The next step is to show that the
distinguished contributions produced along these routes survive the
competing terms created by PUSH and restricted PBW normalization.


\subsection{Descendant sectors and protected noncancellation}
\label{subsec:descendant-protection}


Transport acts as a derivation on \(u(E)\), so a PUSH applied to a PBW
monomial produces several competing terms. We now isolate the distinguished
PUSH branch and organize the remaining terms into descendant sectors. The
purpose of this machinery is to show that positive descendant corrections
cannot return to the same protected PBW coordinate, so the distinguished
routing contribution survives with nonzero coefficient.

\begin{proposition}
\label{prop:general-p-push-expansion}

Let $\nu=(\nu_1,\ldots,\nu_d)$ be a completed certified restricted PBW state relative to the fixed routing basis $\mathcal B_{\mathrm{rt}}=(b_1,\ldots,b_d)$, and write
$M(\nu)=b_1^{\nu_1}\cdots b_d^{\nu_d}$. Then for every \(g\in G\),
\[
        D_g(M(\nu))
        =
        \sum_{\substack{1\leq i\leq d\\ \nu_i>0}}
        \nu_i
        b_1^{\nu_1}
        \cdots
        b_i^{\nu_i-1}
        D_g(b_i)
        \cdots
        b_d^{\nu_d}.
\]
Moreover, each summand corresponds to PUSH acting on exactly one
occupied column. For the \(i\)-th summand, one copy of \(b_i\) is
replaced by its PUSH image \(D_g(b_i)\), so that the \(b_i\)-occupancy
changes from \(\nu_i\) to \(\nu_i-1\), while the other occupancies are
unchanged before restricted PBW normalization. Thus the complete PUSH
output is the sum of the contributions obtained by acting on each
occupied column separately.
\end{proposition}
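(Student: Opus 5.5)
The plan is to apply the Leibniz rule of Proposition~\ref{prop:general-p-push-derivation} repeatedly to the ordered product $M(\nu)=b_1^{\nu_1}\cdots b_d^{\nu_d}$, and then to use commutativity of $u(E)$ to collect terms. First I will write $M(\nu)$ as an ordered product of $n=|\nu|_{\mathrm{occ}}$ factors from $\mathcal B_{\mathrm{rt}}$. In this product $b_i$ appears in $\nu_i$ consecutive positions, and unoccupied directions simply contribute no factor. An induction on $n$, using $D_g(ab)=D_g(a)b+aD_g(b)$ for $a,b\in u(E)$, gives the $n$-fold Leibniz expansion: $D_g$ of the product is the sum over positions of the product with that single factor replaced by its image $D_g(b_i)\in E$.

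Next I will group the $n$ terms by the column $i$ of the replaced factor. For a fixed occupied column $i$ there are exactly $\nu_i$ such terms, one for each copy of $b_i$. They differ only in where $D_g(b_i)$ sits inside the block $b_i^{\nu_i}$. The Lie algebra $E$ is abelian, so the relations $xy-yx$ hold in $u(E)$ and $u(E)$ is commutative. Hence each of these terms equals $b_1^{\nu_1}\cdots b_i^{\nu_i-1}D_g(b_i)\cdots b_d^{\nu_d}$, and together they contribute $\nu_i$ times this element. Columns with $\nu_i=0$ contribute nothing, which gives the displayed formula with the sum restricted to $\nu_i>0$.

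The structural ``moreover'' clause is then read off directly from the grouping. Each summand arises from acting on exactly one occupied column. In that summand the exponent of $b_i$ drops by one, a single factor $D_g(b_i)$ is inserted, and the other exponents are untouched. All of this is before $D_g(b_i)$ is re-expanded in $\mathcal B_{\mathrm{rt}}$ and before any product is renormalized via $b^p=b^{[p]}$.

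The only delicate point is bookkeeping rather than mathematics. The coefficient $\nu_i$ is taken in $\F_p$; since $1\le\nu_i\le p-1$ it is a unit, though the formula does not need that. The word ``occupancy'' should also be understood as the formal exponent pattern of the unnormalized product, since $D_g(b_i)$ may itself have a $b_i$-component or produce a $p$-th power after normalization. I will state this explicitly so that the claim is not misread as a PBW-coordinate statement; that interpretation is deferred to the protection arguments that follow.
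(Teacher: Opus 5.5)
Your argument is correct and is essentially the paper's: the paper first uses commutativity of $u(E)$ to get $D_g(b_i^{\nu_i})=\nu_i b_i^{\nu_i-1}D_g(b_i)$ and then applies the Leibniz rule across the $d$ blocks, whereas you expand over all $|\nu|_{\mathrm{occ}}$ factors and regroup, which is the same computation. One small correction to your closing caveat: $D_g$ raises homogeneous weight by one and $\mathcal B_{\mathrm{rt}}$ is homogeneous, so $D_g(b_i)$ has no $b_i$-component; the formal reading of ``occupancy'' is needed only because $D_g(b_i)$ may involve other, possibly already occupied, directions of the next weight, and because of later restricted normalization.
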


\begin{proof}
Since \(D_g\) is a derivation and \(u(E)\) is commutative,
\[
    D_g(b_i^{\nu_i})
    =
    \nu_i b_i^{\nu_i-1}D_g(b_i).
\]
Applying the Leibniz rule to
$M(\nu)=b_1^{\nu_1}\cdots b_d^{\nu_d}$ therefore gives
\[
    D_g(M(\nu))
    =
    \sum_{\substack{1\leq i\leq d\\ \nu_i>0}}
    \nu_i
    b_1^{\nu_1}\cdots
    b_i^{\nu_i-1}D_g(b_i)
    \cdots b_d^{\nu_d}.
\]
In the \(i\)-th summand, the derivation acts only on the factor
\(b_i^{\nu_i}\), replacing one copy of \(b_i\) by \(D_g(b_i)\) and
leaving all other factors unchanged before restricted PBW
normalization. Hence each summand represents PUSH acting on one
occupied column, and the displayed sum is the complete PUSH output.
\end{proof}

\begin{corollary}
\label{cor:general-p-distinguished-push-branch}
Fix an occupied routing-basis direction \(b_i\), and suppose $D_g(b_i)\neq0$. Then the \(i\)-th term of the PUSH expansion gives the distinguished branch
$b_i^{\nu_i}\rightsquigarrow\nu_i b_i^{\nu_i-1}D_g(b_i)$. Equivalently, up to the nonzero scalar $\nu_i\in\F_p^\times$, one certified copy of \(b_i\) is transported to \(D_g(b_i)\), while the remaining $\nu_i-1$ copies of \(b_i\) remain in place.
\end{corollary}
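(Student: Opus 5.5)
This corollary is a direct specialization of Proposition~\ref{prop:general-p-push-expansion}. I would fix the occupied index \(i\), so \(\nu_i>0\), and read off the \(i\)-th summand of the displayed expansion of \(D_g(M(\nu))\). That summand is
\[
\nu_i\, b_1^{\nu_1}\cdots b_i^{\nu_i-1}D_g(b_i)\cdots b_d^{\nu_d}.
\]
Since \(u(E)\) is commutative, this summand is obtained from \(M(\nu)\) by replacing the factor \(b_i^{\nu_i}\) with \(\nu_i b_i^{\nu_i-1}D_g(b_i)\). All other factors are left untouched. This is exactly the asserted branch \(b_i^{\nu_i}\rightsquigarrow\nu_i b_i^{\nu_i-1}D_g(b_i)\).

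Next I would check that the scalar \(\nu_i\) lies in \(\F_p^\times\). Because \(M(\nu)\) is a restricted PBW state, we have \(1\leq\nu_i\leq p-1\). So \(\nu_i\) is not divisible by \(p\), and its image in \(\F_p\) is nonzero. The hypothesis \(D_g(b_i)\neq0\) then ensures that the branch is a genuine nonzero contribution of the derivation rather than a vacuous term. Hence, up to the unit \(\nu_i\), exactly one copy of \(b_i\) is transported to \(D_g(b_i)\), while the remaining \(\nu_i-1\) copies of \(b_i\) stay in place. This is the equivalent reformulation in the statement.

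There is no serious obstacle in this argument. The only point needing care is that the statement concerns the branch \emph{before} restricted PBW normalization. Nothing is claimed about whether \(D_g(b_i)\) is itself a basis vector or about how it interacts with the other factors after normalization. I would therefore state explicitly that the corollary identifies only the term contributed by the \(i\)-th column in Proposition~\ref{prop:general-p-push-expansion}. Its survival against competing terms is deferred to the protection machinery developed subsequently.
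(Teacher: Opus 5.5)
Your proposal is correct and is essentially the paper's own proof. Like the paper, you read off the $i$-th Leibniz summand of Proposition~\ref{prop:general-p-push-expansion} and use $1\leq\nu_i\leq p-1$ to conclude $\nu_i\in\F_p^\times$. Your remark that the claim concerns the branch before restricted PBW normalization is the right reading. The phrase ``genuine nonzero contribution'' should be understood only at that pre-normalization level, since the summand itself can vanish in $u(E)$, for instance if $D_g(b_i)=b_k$ with $\nu_k=p-1$ and $b_k^{[p]}=0$.
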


\begin{proof}
By Proposition~\ref{prop:general-p-push-expansion}, the \(i\)-th
summand is
\[
    \nu_i b_1^{\nu_1}\cdots
    b_i^{\nu_i-1}D_g(b_i)\cdots b_d^{\nu_d},
\]
giving $b_i^{\nu_i} \rightsquigarrow \nu_i b_i^{\nu_i-1}D_g(b_i)$. Since \(1\leq\nu_i\leq p-1\), one has \(\nu_i\in\F_p^\times\); and since \(D_g(b_i)\neq0\), this transports
one copy of \(b_i\) to \(D_g(b_i)\), leaving the remaining \(\nu_i-1\) copies unchanged.
\end{proof}

The desired PUSH contribution is now identified. We next collect the
competing positive PUSH descendants and show that they remain separated
from the direction whose PBW coordinate is being protected.

\begin{definition}
\label{def:general-p-positive-push-descendants}
Define the \emph{positive PUSH descendant space} of \(u\) by
\[
        \mathscr D^+(u)
        :=
        \Span_{\F_p}
        \left\{
        D_{g_r}\cdots D_{g_1}(u)
        :
        r\geq1,\;
        g_1,\ldots,g_r\in G
        \right\},
\]
where zero iterated descendants may of course be omitted. Thus every element used to generate $\mathscr D^+(u)$ has undergone at least one genuine PUSH.
\end{definition}

\begin{definition}
\label{def:general-p-descendant-sector}
The \emph{descendant sector} of \(u\), denoted by \(\mathscr S(u)\),
is the smallest graded \(\F_p\)-subspace of \(E\) containing the
positive PUSH descendant space \(\mathscr D^+(u)\) and closed under
restricted \(p\)-POWER.
\end{definition}

\begin{lemma}
\label{lem:general-p-descendant-sector-closure}
The descendant sector \(\mathscr S(u)\) is closed under homogeneous
decomposition, \(\F_p\)-linear combinations, and restricted
\(p\)-POWER. In particular, if $x\in\mathscr S(u)\cap E_j$, then
$x^{[p]}\in\mathscr S(u)\cap E_{pj}$.
\end{lemma}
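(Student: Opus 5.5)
The plan is to read all three closure properties off Definition~\ref{def:general-p-descendant-sector}. The main care goes into checking that the phrase ``smallest graded \(\F_p\)-subspace closed under restricted \(p\)-POWER'' actually defines something. So the first step is to show that \(\mathscr S(u)\) exists. Consider the family \(\mathcal F\) of all graded \(\F_p\)-subspaces \(S\subseteq E\) with \(\mathscr D^+(u)\subseteq S\) and \(S^{[p]}\subseteq S\). This family is nonempty: \(E\) belongs to it, since \(E\) is graded and \(E_j^{[p]}\subseteq E_{pj}\). Here graded means \(S=\bigoplus_j(S\cap E_j)\).

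Next I will check that \(\mathcal F\) is closed under arbitrary intersections, so that \(\mathscr S(u)=\bigcap_{S\in\mathcal F}S\) is its least element. An intersection of subspaces is a subspace. It contains \(\mathscr D^+(u)\) and is closed under \(x\mapsto x^{[p]}\), because each member is. The only point needing an argument is gradedness. Let \(x\) lie in every \(S\in\mathcal F\), with homogeneous decomposition \(x=\sum_j x_j\), \(x_j\in E_j\). Each \(S\) is graded, and the decomposition in \(E=\bigoplus_j E_j\) is unique, so \(x_j\in S\cap E_j\) for every \(S\). Hence \(x_j\) lies in the intersection, which is therefore graded.

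With \(\mathscr S(u)\) identified, the three assertions follow directly:
\begin{itemize}
\item Closure under homogeneous decomposition is exactly the gradedness just established.
\item Closure under \(\F_p\)-linear combinations holds because \(\mathscr S(u)\) is a subspace.
\item Closure under restricted \(p\)-POWER holds by construction.
\end{itemize}

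For the final sentence, take \(x\in\mathscr S(u)\cap E_j\). Then \(x^{[p]}\in\mathscr S(u)\), and the restricted map sends \(E_j\) to \(E_{pj}\): this is the relation \(V_j^p\subseteq V_{pj}\), as used in Lemma~\ref{lem:general-p-routing-generation}. Hence \(x^{[p]}\in\mathscr S(u)\cap E_{pj}\).

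The main obstacle is only this well-definedness step, and in particular the compatibility of the \(p\)-map with linear structure. This matters if one prefers the constructive description of \(\mathscr S(u)\): the span of the homogeneous components of elements of \(\mathscr D^+(u)\), together with all their iterated restricted powers. For that description I would invoke two facts.
\begin{itemize}
\item Since \(E\) is abelian, the \(p\)-map is additive, \((x+y)^{[p]}=x^{[p]}+y^{[p]}\), as used in the proof of Proposition~\ref{prop:general-p-push-derivation}.
\item It is \(p\)-semilinear, and \(\lambda^p=\lambda\) for \(\lambda\in\F_p\).
\end{itemize}
Together these show that the \(p\)-map is \(\F_p\)-linear. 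Consequently it suffices to check closure on a homogeneous spanning set. It also shows that the constructive span is itself graded and closed, and so coincides with the intersection above.
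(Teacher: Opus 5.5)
Your proof is correct and follows the same route as the paper, which simply reads the three closure properties off the definition of \(\mathscr S(u)\) as the graded restricted subspace generated by \(\mathscr D^+(u)\). Your intersection argument for existence, and your remark that the \(p\)-map is \(\F_p\)-linear, make explicit what the paper's one-line proof leaves implicit.
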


\begin{proof}
These properties follow directly from the definition of
\(\mathscr S(u)\) as the graded restricted subspace of \(E\) generated
by \(\mathscr D^+(u)\).
\end{proof}

\begin{lemma}
\label{lem:general-p-descendants-higher-weight}
Every nonzero homogeneous element $v\in\mathscr S(u)$ satisfies $\wt(v)>\wt(u)=m$. In particular, $u\notin\mathscr S(u)$.
\end{lemma}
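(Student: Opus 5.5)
Here $u\in E_m$ is a nonzero homogeneous element, so $\wt(u)=m$. The plan is to show that the homogeneous elements of weight $>m$ already span a graded subspace containing $\mathscr D^+(u)$ and closed under restricted $p$-POWER. Minimality of $\mathscr S(u)$ in Definition~\ref{def:general-p-descendant-sector} then forces $\mathscr S(u)$ into that subspace. Concretely, put
\[
    F_{>m}:=\bigoplus_{j>m}E_j .
\]
This is a graded $\F_p$-subspace of $E$, and the claim is $\mathscr S(u)\subseteq F_{>m}$.

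First I check that $\mathscr D^+(u)\subseteq F_{>m}$. Each generator $D_{g_r}\cdots D_{g_1}(u)$ has $r\geq1$. By Lemma~\ref{lem:general-p-transport-well-defined}, each $D_g$ maps $E_j$ into $E_{j+1}$. By induction on $r$, the generator therefore lies in $E_{m+r}$, and $m+r>m$. Spans of such elements stay in $F_{>m}$.

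Second I check that $F_{>m}$ is closed under restricted $p$-POWER. Since $E$ is abelian, the $p$-map is additive, exactly as used in the proof of Proposition~\ref{prop:general-p-push-derivation}. So for $x=\sum_{j>m}x_j$ we get $x^{[p]}=\sum_{j>m}x_j^{[p]}$. Each $x_j^{[p]}$ lies in $E_{pj}$, and $pj\geq 2j>m$ since $p\geq2$. The only point needing care is that the $p$-map is semilinear rather than linear: $(\lambda x)^{[p]}=\lambda^p x^{[p]}$. Over $\F_p$ we have $\lambda^p=\lambda$, so there is no issue in any case.

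Hence $F_{>m}$ contains $\mathscr D^+(u)$ and is closed under POWER. By minimality, $\mathscr S(u)\subseteq F_{>m}$. Now take a nonzero homogeneous $v\in\mathscr S(u)$, say $v\in E_j$. Its image under the projection onto $E_j$ is $v\neq0$. Since $v\in F_{>m}=\bigoplus_{j'>m}E_{j'}$, this projection can be nonzero only when $j>m$. Thus $\wt(v)>m$.

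For the final assertion, $u\neq0$ is homogeneous of weight exactly $m$. So $u\in\mathscr S(u)$ would contradict what we just proved, and therefore $u\notin\mathscr S(u)$.

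The main obstacle is only bookkeeping: making sure the POWER closure cannot drop weight, which it cannot since $p\geq2$. We must also note that the generated subspace is taken inside the graded space $E$, so the homogeneous-component argument applies.
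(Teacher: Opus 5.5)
Your proof is correct and follows the same idea as the paper's. The generators of $\mathscr D^+(u)$ lie in $E_{m+r}$ with $r\geq1$, and restricted POWER sends $E_j$ to $E_{pj}$ with $pj>m$, so the closure never reaches weight $\leq m$; your use of the explicit subspace $\bigoplus_{j>m}E_j$ together with the minimality in the definition of $\mathscr S(u)$ just makes rigorous the paper's informal claim that the graded restricted closure cannot produce low-weight elements.
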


\begin{proof}
Every positive PUSH descendant has the form $D_{g_r}\cdots D_{g_1}(u)$, $r\geq1$. Since $D_g:E_j\longrightarrow E_{j+1}$, every nonzero such descendant belongs to
$E_{m+r}$ and therefore has weight strictly greater than \(m\). Passing to homogeneous linear combinations does not decrease homogeneous weight. Restricted POWER sends
$E_j\longrightarrow E_{pj}$. If \(j>m\), then $pj>m$. Hence the graded restricted closure of the positive PUSH descendants cannot produce a nonzero homogeneous element of weight at most \(m\).
Therefore every nonzero homogeneous $v\in\mathscr S(u)$ has weight \(>m\). Since $u\in E_m$, it follows that $u\notin\mathscr S(u)$.
\end{proof}

\begin{lemma}
\label{lem:general-p-descendant-sector-push-stable}
Applying a further PUSH to a descendant-sector element produces either
zero or another element of the descendant sector. More precisely, for
every \(g\in G\), $D_g(\mathscr S(u))\subseteq\mathscr S(u)$.
\end{lemma}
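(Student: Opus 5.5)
The plan is to describe the descendant sector \(\mathscr S(u)\) by an explicit spanning set, check that \(D_g\) sends each spanning vector into \(\mathscr S(u)\), and conclude by linearity. Throughout, \(u\in E_m\) is homogeneous, as in Lemma~\ref{lem:general-p-descendants-higher-weight}. Write \(P\) for the restricted power map \(x\mapsto x^{[p]}\). Let
\[
\mathcal G:=\{D_{g_r}\cdots D_{g_1}(u): r\geq1,\ g_1,\ldots,g_r\in G\}
\]
be the generating set of \(\mathscr D^+(u)\). Each element of \(\mathcal G\) is homogeneous of weight \(m+r\), by Lemma~\ref{lem:general-p-transport-well-defined}.

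\textbf{Step 1: an explicit description of \(\mathscr S(u)\).} Put
\[
S':=\Span_{\F_p}\{P^k(w): w\in\mathcal G,\ k\geq0\}.
\]
Every \(P^k(w)\) is homogeneous, so \(S'\) is graded. It contains \(\mathscr D^+(u)\), since \(k=0\) is allowed.

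To see that \(S'\) is closed under \(P\), use that \(E\) is an abelian restricted Lie algebra. Then \((x+y)^{[p]}=x^{[p]}+y^{[p]}\), as already used in the proof of Proposition~\ref{prop:general-p-push-derivation}. Also \((\lambda x)^{[p]}=\lambda^p x^{[p]}=\lambda x^{[p]}\) for \(\lambda\in\F_p\). So \(P\) is \(\F_p\)-linear on \(E\), and it maps the spanning set of \(S'\) into itself via \(P^k(w)\mapsto P^{k+1}(w)\). Hence \(S'\) is closed under \(P\).

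Conversely, \(S'\subseteq\mathscr S(u)\). Indeed, \(\mathscr S(u)\) contains each \(w\in\mathcal G\) and is closed under \(P\), so it contains every \(P^k(w)\). By minimality of \(\mathscr S(u)\) we get \(\mathscr S(u)=S'\).

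\textbf{Step 2: the action of \(D_g\) on spanning vectors.} Fix \(g\in G\) and a spanning vector \(P^k(w)\) with \(w\in\mathcal G\).

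If \(k=0\), then \(w=D_{g_r}\cdots D_{g_1}(u)\). Hence \(D_g(w)=D_gD_{g_r}\cdots D_{g_1}(u)\) is a positive PUSH descendant of length \(r+1\), and it lies in \(\mathscr D^+(u)\subseteq\mathscr S(u)\). If this vector is zero, it lies there trivially.

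If \(k\geq1\), write \(P^k(w)=x^{[p]}\) with \(x=P^{k-1}(w)\). This \(x\) is homogeneous of weight \(p^{k-1}(m+r)\). Lemma~\ref{lem:general-p-transport-power-zero} then gives \(D_g(P^k(w))=0\in\mathscr S(u)\).

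\textbf{Step 3: conclusion.} \(D_g\) is \(\F_p\)-linear by Lemma~\ref{lem:general-p-transport-well-defined}, extended additively across homogeneous components. By Step~2 it maps every spanning vector of \(\mathscr S(u)=S'\) into \(\mathscr S(u)\). Therefore \(D_g(\mathscr S(u))\subseteq\mathscr S(u)\).

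The main obstacle is Step~1: we need the explicit spanning set, with no mixed terms. This rests entirely on the \(\F_p\)-linearity of the restricted power map on the abelian restricted Lie algebra \(E\). Without that linearity, \(P\) applied to sums could produce new elements, and the case analysis of Step~2 would not obviously cover them. The vanishing of \(D_g\) on \(p\)-th powers is already supplied by Lemma~\ref{lem:general-p-transport-power-zero}, which requires its argument to be homogeneous. This is why Step~1 keeps every spanning vector homogeneous.
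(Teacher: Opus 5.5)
Your proof is correct and is essentially the paper's argument: $D_g$ sends each positive PUSH descendant to another one (or to zero), and it kills the restricted $p$-th power of every homogeneous element by Lemma~\ref{lem:general-p-transport-power-zero}; linearity of $D_g$ then finishes. The only difference is that you describe the restricted closure explicitly as $\Span_{\F_p}\{P^k(w)\}$, which needs additivity of the $p$-map on the abelian algebra $E$, whereas the paper appeals directly to minimality of $\mathscr S(u)$ and so does not need additivity: the set $\{s\in\mathscr S(u): D_g(s)\in\mathscr S(u)\}$ is already a graded subspace containing $\mathscr D^+(u)$ and closed under $x\mapsto x^{[p]}$.
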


\begin{proof}
The space \(\mathscr D^+(u)\) is spanned by iterated positive PUSH descendants $D_{g_r}\cdots D_{g_1}(u)$, $r\geq1$. Applying another \(D_g\) gives either zero or another positive PUSH
descendant. Hence $D_g(\mathscr D^+(u)) \subseteq \mathscr D^+(u)$. It remains to check compatibility with restricted closure. For homogeneous
$x\in\mathscr S(u)$, Lemma~\ref{lem:general-p-transport-power-zero} gives $D_g(x^{[p]})=0$. Transport is linear and respects homogeneous decomposition because
$D_g(E_j)\subseteq E_{j+1}$. Therefore \(D_g\) preserves the smallest graded restricted subspace containing the positive PUSH descendants. Hence
$D_g(\mathscr S(u)) \subseteq \mathscr S(u)$.
\end{proof}

Thus positive descendants remain in a PUSH-stable sector of strictly higher
homogeneous weight. We now convert this weight separation into a device for
protecting a distinguished PBW coordinate through subsequent reconstruction
and normalization.

\begin{definition}
\label{def:general-p-saturated-descendant-factor}

Let \(U\) be the span of the homogeneous directions whose occupancies are protected during the current reconstruction, and put
$S=\mathscr S(u)$. Choose a homogeneous decomposition $S=(S\cap U)\oplus T$ and an ordered homogeneous basis \(b_1,\ldots,b_q\) of \(T\). Define
$\Omega_{S,U}^{(p)} := b_1^{p-1}\cdots b_q^{p-1}$. This is called the \emph{relative descendant separator}. The chosen homogeneous basis of \(T\) is used only to construct this auxiliary
separator; it does not replace the fixed routing basis \(\mathcal B_{\mathrm{rt}}\), relative to which all certified PBW coordinates and protected exponent profiles are computed.
\end{definition}

\begin{definition}
\label{def:general-p-protected-reconstruction-word}
Let \(u\in E\) be homogeneous, let \(W\) be a finite genuine chronological routing word for \(u\), and let \(U\) be the span of the homogeneous directions whose occupancies are to be protected. Put $S=\mathscr S(u)$, choose a homogeneous decomposition $S=(S\cap U)\oplus T$, and let $\Omega_{S,U}^{(p)}$ be the corresponding relative descendant separator from Definition~\ref{def:general-p-saturated-descendant-factor}. We call the resulting data $(W,U,S,\Omega_{S,U}^{(p)})$ a \emph{protected reconstruction datum} for \(u\).
\end{definition}

\begin{lemma}
\label{lem:general-p-occupancy-normalization}
Let \(A=u(E)\). For \(n\geq 0\), let \(F_nA\) be the
\(\mathbb F_p\)-span of all products of at most \(n\) elements of \(E\).
Then
\[
F_0A\subseteq F_1A\subseteq\cdots
\]
is the occupancy filtration of \(A\). Every nontrivial use of a
restricted PBW normalization $x^p=x^{[p]}$ lowers occupancy by exactly \(p-1\). Consequently, complete restricted
PBW normalization is occupancy-nonincreasing, and is strictly
occupancy-decreasing whenever at least one restricted relation is used.
\end{lemma}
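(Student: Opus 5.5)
The plan is to argue at the level of formal words in the generators, where ``occupancy'' of a word means the number of factors from \(E\) it contains. An element of \(F_nA\) is then a linear combination of words of occupancy at most \(n\). First I would record the basic properties of \(F_\bullet A\). A product of at most \(n\) elements of \(E\) is also a product of at most \(n+1\) elements, so \(F_nA\subseteq F_{n+1}A\). Concatenation gives \(F_mA\cdot F_nA\subseteq F_{m+n}A\). Finally \(\bigcup_nF_nA=A\), because \(u(E)\) is generated by \(E\). This justifies calling \(F_\bullet A\) the occupancy filtration.

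Next I would analyse the elementary moves used to bring an arbitrary word \(x_1\cdots x_n\) (\(x_i\in E\)) into restricted PBW form relative to the fixed routing basis \(\mathcal B_{\mathrm{rt}}\). There are three kinds of move.
\begin{enumerate}[label=(\roman*)]
\item Expand each \(x_i\) linearly in the basis \(b_1,\ldots,b_d\). By multilinearity of the product, this yields a combination of words each having exactly \(n\) basis factors.
\item Reorder the factors. Since \(E\) is abelian, \(u(E)\) is commutative, so the ordinary relations \(xy=yx\) permute factors without changing their number.
\item Whenever some \(b_i\) occurs with exponent at least \(p\), apply \(b_i^p=b_i^{[p]}\). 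This deletes \(p\) factors and inserts the single element \(b_i^{[p]}\in E\). Re-expanding \(b_i^{[p]}\) in the basis, as in (i), gives single basis factors again.
\end{enumerate}
Each application of (iii) therefore changes occupancy from \(k\) to \(k-p+1\), a drop of exactly \(p-1\). If \(b_i^{[p]}=0\), the term simply vanishes, which only helps; this is the sense in which only nontrivial uses are counted.

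Combining these, complete normalization is a finite sequence of moves. Moves (i) and (ii) preserve occupancy and move (iii) lowers it by \(p-1\), so every resulting PBW monomial has occupancy at most the original, with strict inequality as soon as move (iii) was used on the branch producing it. Termination follows from the same count. Moves (i) and (ii) can only be applied finitely often before the word is ordered and in basis form. Each move (iii) strictly decreases the nonnegative integer occupancy, so only finitely many are possible.

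The main subtlety, and the step to state carefully, is that the assertion concerns the occupancy of the individual words produced by the rewriting process, not the minimal filtration degree of the resulting element. An element may already lie in a lower \(F_nA\) for other reasons. I would therefore phrase strict decrease branch by branch, for each term in which the relation was invoked. The element-level statement, that normalization maps \(F_nA\) into \(F_nA\), then follows by summing over branches.
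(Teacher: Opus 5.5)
Your proposal is correct and takes essentially the same route as the paper. Both arguments count factors at the level of words: linear expansion and commutation preserve the number of factors from $E$, while replacing $x^p$ ($p$ factors) by $x^{[p]}\in E$ (one factor) lowers it by exactly $p-1$, so repeated normalization can only lower occupancy. Your extra care makes explicit what the paper's short proof leaves implicit: you check the filtration properties, termination and the degenerate case $x^{[p]}=0$, and you state strict decrease branch by branch rather than as a claim about the minimal filtration degree of the resulting element.
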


\begin{proof}
A product of \(r\) elements of \(E\) has occupancy \(r\). Before normalization, the factor \(x^p\) has occupancy \(p\). Since $x^{[p]}\in E$, after normalization the same factor has occupancy \(1\). Thus one use of $x^p=x^{[p]}$ lowers occupancy by $p-1$. Every further restricted normalization has the same effect. Hence complete restricted PBW normalization never increases occupancy, and it decreases occupancy strictly whenever at least one restricted
relation is used.
\end{proof}

\begin{lemma}
\label{lem:general-p-relative-descendant-separation}
Let $(W,U,S,\Omega_{S,U}^{(p)})$ be a protected reconstruction datum for \(u\), where \(U\) is the span of the protected routing-basis directions and $S=(S\cap U)\oplus T$.
Let $\Omega_{S,U}^{(p)}=c_1^{p-1}\cdots c_q^{p-1}$, where \(c_1,\ldots,c_q\) is the chosen homogeneous basis of \(T\). Consider a distinguished top-occupancy PBW contribution and an
unwanted contribution of the same pre-normalization occupancy in which the distinguished direction is replaced by a nonzero homogeneous positive PUSH descendant correction $s=s_U+t$,
$s_U\in S\cap U$, $t\in T$.. After multiplication by \(\Omega_{S,U}^{(p)}\) and complete restricted PBW normalization, the unwanted contribution either has a different
protected exponent profile or lies in strictly lower occupancy. Consequently it cannot contribute to the distinguished protected top-occupancy PBW coordinate.
\end{lemma}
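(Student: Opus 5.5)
The proof splits the correction $s=s_U+t$ according to whether its $T$-component $t$ vanishes. The claim is that a nonzero $t$ forces a drop in occupancy, while $t=0$ forces a change of protected exponent profile. Throughout I would measure occupancy with the basis-free filtration $F_nA$ of Lemma~\ref{lem:general-p-occupancy-normalization}. This matters because $\Omega^{(p)}_{S,U}$ is written in the auxiliary basis $c_1,\ldots,c_q$ of $T$, whereas protected profiles are read off in the fixed routing basis $\mathcal B_{\mathrm{rt}}$. Since $F_nA$ is defined without reference to any basis, the change of basis cannot move terms between occupancy levels. First I record the independence needed. By Lemma~\ref{lem:general-p-descendants-higher-weight}, every nonzero homogeneous element of $S$ has weight strictly greater than $\wt(u)$, so $u\notin S$. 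Hence $u$, a basis of $S\cap U$, and $c_1,\ldots,c_q$ are linearly independent in $E$. Consequently the distinguished contribution multiplied by $\Omega^{(p)}_{S,U}$ keeps its full pre-normalization occupancy $n$; in particular its top-occupancy symbol is nonzero.

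\emph{Case $t\neq0$.} Write $t=\sum_k\lambda_k c_k$ with some $\lambda_k\neq0$. In the unwanted contribution, the factor $t\cdot c_1^{p-1}\cdots c_q^{p-1}$ expands by commutativity of $u(E)$ into the terms $\lambda_k c_1^{p-1}\cdots c_k^{p}\cdots c_q^{p-1}$. Each of these requires the relation $c_k^p=c_k^{[p]}$. By Lemma~\ref{lem:general-p-occupancy-normalization}, each therefore lands in $F_{n-(p-1)}A\subseteq F_{n-1}A$. The remaining summand $s_U\cdot\Omega^{(p)}_{S,U}$ (times the other factors) is then handled exactly as in the next case.

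\emph{Case $t=0$, so $0\neq s=s_U\in S\cap U$.} Expand $s_U$ in the protected routing-basis directions. By Lemma~\ref{lem:general-p-descendants-higher-weight}, every such direction has weight strictly greater than $\wt(u)$, so none of them is $u$.
\begin{itemize}
\item Replacing the distinguished copy of $u$ by $s_U$ lowers the $u$-exponent by one. It raises the exponent of some protected direction $b$ with $\wt(b)>\wt(u)$.
\item If that raised exponent reaches $p$, normalization again drops the term to strictly lower occupancy.
\item Otherwise the resulting monomial has a protected profile different from the distinguished one, since the $u$-coordinate differs.
\end{itemize}
A term of the same occupancy and different profile has zero coefficient at the distinguished coordinate, by uniqueness of PBW coordinates (Lemma~\ref{lem:terminal-pbw-coefficient-detection}). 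A term in strictly lower occupancy cannot meet the top-occupancy coordinate either. This gives the final assertion.

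The main obstacle is the interaction between the two bases. The factor $\Omega^{(p)}_{S,U}$ must be rewritten in $\mathcal B_{\mathrm{rt}}$, and $T$ may be spanned by vectors involving non-protected routing directions. I would handle this by working in the associated graded $\operatorname{gr}_F A$. This is the truncated symmetric algebra $S(E)/(x^p)$, in which all of the above products are computed multilinearly and independently of basis. In $\operatorname{gr}_F A$ the top symbol of $t\cdot\Omega^{(p)}_{S,U}$ is $0$ whenever $t\in T$. The profile comparison also reduces to comparing $u$-degrees, which is well defined because $u\notin U+T$.
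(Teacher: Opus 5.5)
Your proof has the same structure as the paper's. You split according to whether $t=0$, and for the $T$-component you use the same expansion $t\,\Omega^{(p)}_{S,U}=\sum_k\lambda_k c_1^{p-1}\cdots c_k^{p}\cdots c_q^{p-1}$, which forces a loss of $p-1$ units of occupancy. Your treatment of $t=0$ is more careful than the paper's. The paper only observes that $s_U$ raises some protected exponent. It ignores that the $c_k$, once expanded in $\mathcal B_{\mathrm{rt}}$, can themselves have protected components, so that exponent is not invariant under multiplication by $\Omega^{(p)}_{S,U}$. Your $u$-exponent in $\gr_F A\cong S(E)/(x^p)$, with occupancy measured by the basis-free $F_nA$, is a genuine invariant.

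One justification is wrong, however. You say the $u$-degree is well defined ``because $u\notin U+T$''. This fails in the main use of the lemma: in FILL one reconstructs $u=b_i$ while its $r$ existing copies are protected, so $u\in U$. The correct reason is the weight fact you already established. Write the two contributions as $uM$ and $sM$, with $M$ the common remaining factors. The element $s_U$, each $c_k$, and every $x^{[p]}$ produced when collisions among their routing components are normalized are homogeneous of weight $>\wt(u)$. Hence their $\mathcal B_{\mathrm{rt}}$-expansions never involve $u$. Every term of $s_UM\,\Omega^{(p)}_{S,U}$ therefore carries the $u$-exponent of $M$, which is one less than that of the distinguished coordinate.

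Your aside that $uM\,\Omega^{(p)}_{S,U}$ keeps occupancy $n$ does not follow from the independence of $u$, $S\cap U$ and $T$ alone. It depends on $M$, and it fails, for instance, if $M$ contains a routing direction lying in $T$. The lemma presupposes a top-occupancy distinguished coordinate, so you can simply drop this aside.

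Finally, the same weight observation gives the lemma's final conclusion directly. Restricted normalization preserves homogeneous weight, so every PBW monomial of $sM\,\Omega^{(p)}_{S,U}$ has weight exactly $\wt(s)-\wt(u)>0$ larger than every monomial of $uM\,\Omega^{(p)}_{S,U}$. For that conclusion neither the separator nor the case split is needed.
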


\begin{proof}
Write $s=s_U+t$, $s_U\in S\cap U$, $t\in T$. Suppose first that \(t=0\). Then $0\neq s=s_U\in S\cap U\subseteq U$. Since \(U\) is spanned by the protected routing-basis directions,
expansion of \(s_U\) in the fixed routing basis \(\mathcal B_{\mathrm{rt}}\) contains a nonzero protected direction. Thus, before any restricted normalization, every resulting nonzero
term differs from the distinguished contribution in at least one protected exponent. If no restricted normalization occurs, this different protected exponent profile remains visible in the final restricted PBW coordinate. If a restricted normalization does occur, then Lemma~\ref{lem:general-p-occupancy-normalization} gives a strict occupancy drop. Since the distinguished contribution is being taken at the top-occupancy PBW coordinate, such a term cannot return to that coordinate. Now suppose that \(t\neq0\). Write $t=\lambda_1c_1+\cdots+\lambda_qc_q$, with some \(\lambda_j\neq0\). Then
\[
t\Omega_{S,U}^{(p)}
=
\sum_{j=1}^q
\lambda_j
c_1^{p-1}\cdots c_j^p\cdots c_q^{p-1}.
\]
For every \(j\) with \(\lambda_j\neq0\), the corresponding summand contains the factor \(c_j^p\). Hence the restricted relation $c_j^p=c_j^{[p]}$ must be used. By Lemma~\ref{lem:general-p-occupancy-normalization}, that summand loses at least \(p-1\) units of occupancy under complete restricted PBW normalization. The unwanted contribution had the same pre-normalization occupancy as the distinguished contribution. Therefore, after this forced normalization, every contribution arising from the nonzero \(T\)-component has strictly smaller occupancy than the distinguished top-occupancy contribution. Complete restricted PBW normalization cannot increase occupancy, so no such contribution can return to the distinguished top-occupancy PBW coordinate. Thus the \(S\cap U\)-component is separated either by a different protected exponent profile or by strict occupancy loss, while every nonzero \(T\)-component is separated by forced strict occupancy loss. Hence the unwanted positive PUSH descendant correction cannot contribute to the distinguished protected top-occupancy PBW coordinate.
\end{proof}

\begin{lemma}
\label{lem:general-p-normalization-lowers-occupancy}
Every nontrivial use of a restricted PBW normalization
$x^p=x^{[p]}$ lowers total occupancy by exactly \(p-1\).
Consequently, if a chronological contribution undergoes at least one
nontrivial restricted normalization, its final PBW occupancy is strictly
smaller than its pre-normalization occupancy.
\end{lemma}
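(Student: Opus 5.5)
The statement repeats the content of Lemma~\ref{lem:general-p-occupancy-normalization}, so the plan is to rederive it directly in the occupancy filtration \(F_nA\) defined there. I would make precise what "occupancy" means for a chronological contribution. Before normalization, a contribution is a product \(y_1y_2\cdots y_r\) of homogeneous elements of \(E\), each expanded in the fixed routing basis \(\mathcal B_{\mathrm{rt}}\). Its pre-normalization occupancy is \(r\). Since \(u(E)\) is commutative (\(E\) is abelian), I can rewrite each term after expansion as \(b_1^{a_1}\cdots b_d^{a_d}\) with \(\sum a_i=r\). The only obstruction to this being a restricted PBW monomial is some \(a_i\geq p\).

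A single nontrivial normalization step picks an index with \(a_i\geq p\) and replaces \(b_i^p\) by \(b_i^{[p]}\in E\). The factor \(b_i^p\) contributes \(p\) to the occupancy count and \(b_i^{[p]}\) contributes \(1\), while all other factors are untouched. So the product becomes \(b_i^{[p]}\,b_1^{a_1}\cdots b_i^{a_i-p}\cdots b_d^{a_d}\), a product of exactly \(r-(p-1)\) elements of \(E\). This is the "exactly \(p-1\)" assertion.

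One point needs care. The element \(b_i^{[p]}\) must be re-expanded in \(\mathcal B_{\mathrm{rt}}\) as a linear combination of basis vectors. By \(\F_p\)-linearity this splits the term into a sum of terms, each again a product of exactly \(r-p+1\) basis elements, or zero. Occupancy is therefore well defined termwise, and the step maps \(F_r\)-type products into \(F_{r-p+1}A\). If \(b_i^{[p]}=0\), the term vanishes, which only helps the strict-decrease claim.

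For the consequence I would argue by induction on the number \(k\) of nontrivial restricted normalizations applied to a given contribution. Each application lowers occupancy by \(p-1\). Every other rewriting (reordering by commutativity, linear re-expansion) preserves the number of factors. Hence every surviving term after complete normalization has occupancy \(r-k(p-1)\). Termination of complete normalization follows because each step strictly lowers the nonnegative integer occupancy. Since \(p\geq2\), \(k\geq1\) gives final occupancy at most \(r-1<r\).

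The main subtlety I expect is making "final PBW occupancy" independent of the order of rewriting. Once the result is a restricted PBW monomial \(b_1^{\varepsilon_1}\cdots b_d^{\varepsilon_d}\), its occupancy is \(\sum\varepsilon_i\), which is intrinsic by the restricted PBW basis. The termwise count above then agrees with it, so no order dependence arises.
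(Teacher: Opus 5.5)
Your proposal is correct and is essentially the paper's argument: the factor $x^p$, of occupancy $p$, is replaced by the single direction $x^{[p]}\in E$, of occupancy $1$, and each further normalization lowers the count by another $p-1$. Your extra bookkeeping only makes explicit what the paper's two-line proof leaves implicit: re-expanding $b_i^{[p]}$ in $\mathcal B_{\mathrm{rt}}$, the case $b_i^{[p]}=0$, termination, and order-independence via uniqueness of the restricted PBW expansion.
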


\begin{proof}
Before normalization, the displayed factor $x^p$ has occupancy \(p\).
After normalization, $x^{[p]}\in E$ is one homogeneous PBW direction and
therefore has occupancy \(1\). Hence the occupancy drop is \(p-1\).
If further restricted normalizations are required, each lowers occupancy
by another \(p-1\). Thus complete restricted PBW normalization is
occupancy-nonincreasing and is strictly occupancy-decreasing whenever at
least one restricted relation is used.
\end{proof}

\begin{lemma}
\label{lem:general-p-protected-factor-through}
Let \(W\) be a finite genuine chronological routing word for \(u\), and
let \(U\) be the span of the homogeneous directions whose occupancies
are protected along \(W\). At the distinguished top-occupancy PBW
coordinate, multiplication by the relative descendant separator
\(\Omega_{\mathscr S(u),U}^{(p)}\) separates every unwanted positive
descendant correction. After complete restricted PBW normalization,
each such correction either has a different protected exponent profile
or lies in strictly lower occupancy. Hence it cannot contribute to the
distinguished protected PBW coordinate.
\end{lemma}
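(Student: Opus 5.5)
The plan is to reduce the statement to Lemma~\ref{lem:general-p-relative-descendant-separation}, applied step by step along \(W\). First I fix the protected span \(U\) and put \(S=\mathscr S(u)\). I then choose a homogeneous decomposition \(S=(S\cap U)\oplus T\) as in Definition~\ref{def:general-p-saturated-descendant-factor}, so that \((W,U,S,\Omega_{S,U}^{(p)})\) is a protected reconstruction datum for \(u\) in the sense of Definition~\ref{def:general-p-protected-reconstruction-word}. Once this datum is in place, the conclusion is exactly the separation statement of Lemma~\ref{lem:general-p-relative-descendant-separation}. What remains is to check that every unwanted correction produced along \(W\) really has the form that lemma requires: a nonzero homogeneous element \(s\in S\) occupying the slot of the distinguished direction, with the same pre-normalization occupancy as the distinguished contribution.

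I will verify this by induction on the number of elementary steps of \(W=T_r\cdots T_1\). For a PUSH step, Proposition~\ref{prop:general-p-push-expansion} and Corollary~\ref{cor:general-p-distinguished-push-branch} split the output into the distinguished branch and the remaining summands. Each summand replaces exactly one copy of a direction by its \(D_g\)-image, so it preserves pre-normalization occupancy. Now suppose the slot already carries a correction \(s\in S\). Then Lemma~\ref{lem:general-p-descendant-sector-push-stable} keeps \(D_g(s)\) in \(S\), while a PUSH applied to \(u\) itself produces an element of \(\mathscr D^+(u)\subseteq S\). For a POWER step, Lemma~\ref{lem:general-p-descendant-sector-closure} keeps corrections in \(S\). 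Lemma~\ref{lem:general-p-descendants-higher-weight} then guarantees that no correction collapses back to a multiple of \(u\): every nonzero homogeneous element of \(S\) has weight \(>\wt(u)\). Hence each correction is genuinely distinct from the distinguished direction.

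With this in hand I split each correction as \(s=s_U+t\) and follow the two cases of Lemma~\ref{lem:general-p-relative-descendant-separation}. If \(t\neq0\), multiplication by \(\Omega_{S,U}^{(p)}\) forces a factor \(c_j^p\) and therefore a restricted normalization. By Lemma~\ref{lem:general-p-normalization-lowers-occupancy} this costs \(p-1\) units of occupancy, so the term cannot return to the top-occupancy coordinate. If \(t=0\), the \(U\)-component changes a protected exponent. This change is either visible in the final PBW coordinate or is accompanied by a normalization, which again drops occupancy. In either case the correction does not reach the distinguished protected coordinate.

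I expect the main obstacle to be the bookkeeping in the inductive step. I must ensure that corrections arising at different stages of \(W\) are all measured against the same top occupancy and the same protected profile. I also need that multiplying by the fixed separator commutes with the later chronological steps, up to terms that the same two cases already handle. I would first try to prove an explicit invariant: after each step, the output equals the distinguished contribution plus a sum of terms each of which either carries a factor from \(S\) in the distinguished slot or already has strictly lower occupancy or a different protected profile. I would then show this invariant is preserved by PUSH (using the Leibniz rule) and by POWER (using occupancy monotonicity, Lemma~\ref{lem:general-p-occupancy-normalization}).
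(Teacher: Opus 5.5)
Your route is the paper's route. Its proof of this lemma repeats, for a single correction $s=s_U+t$, the $t=0$/$t\neq0$ dichotomy of Lemma~\ref{lem:general-p-relative-descendant-separation}, and then says that ``applying this at each of the finitely many letters of $W$ proves the result.'' The genuine gap is exactly the step you defer: the invariant you propose is not preserved, because both of its separation clauses describe a term only at the current stage.

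A different protected exponent profile is not inherited by the next letter. Suppose the output contains the distinguished monomial $R\,x$ with coefficient $\lambda\neq0$, and a competitor $R\,y$ with coefficient $\mu$. Here $y\neq x$ is a protected routing-basis direction with $\wt(y)=\wt(x)$, and $x,y,z$ do not occur in $R$; your invariant counts $R\,y$ as already separated. Let the next letter push the factor $x$ on the distinguished branch, with $z=D_g(x)$, and let $D_g(y)$ have nonzero $z$-coordinate $\gamma$. Then the Leibniz rule makes both terms contribute to $R\,z$, and the new distinguished coefficient is $\lambda+\mu\gamma$, which can vanish.

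The occupancy clause fails similarly. ``Strictly lower occupancy'' is measured against the distinguished branch, but at a POWER step that branch is itself normalized by $b_i^p=b_i^{[p]}$ and loses $p-1$ units. The case $t\neq0$ produces competitors that lost $p-1$ units through one forced normalization $c_j^p=c_j^{[p]}$; these now sit at the same occupancy as the distinguished branch. So occupancy monotonicity (Lemma~\ref{lem:general-p-occupancy-normalization}) cannot close the POWER case of your induction.

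The separator does not rescue this. Multiplication by $\Omega_{S,U}^{(p)}$ is not a letter of $W$. Since $D_g$ is a derivation, $D_g(Y\Omega_{S,U}^{(p)})=D_g(Y)\Omega_{S,U}^{(p)}+Y\,D_g(\Omega_{S,U}^{(p)})$. In the second summand some factor $c_j^{p-1}$ drops to $c_j^{p-2}$, so a correction with a $c_j$-component no longer meets a forced $c_j^{p}$. These terms fall under neither of your two cases, contrary to the ``commutes up to terms the same two cases handle'' that your plan needs.

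Moreover, separation at the coordinate of $X\Omega_{S,U}^{(p)}$ does not by itself control the coefficient in $X$. In the associated graded for PBW length one has $T\cdot\Omega_{S,U}^{(p)}=0$, so at top PBW length the separator only sees $X$ modulo the ideal generated by $T$. It can therefore hide a cancelling term rather than exclude it.

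What is missing is an invariant controlling every term of the output that can reach the \emph{next} distinguished monomial, not only those separated from the current one. It must be stable under the Leibniz rule and under the occupancy drop of the distinguished branch at POWER steps. The paper's one-line appeal to ``each of the finitely many letters of $W$'' does not supply it either, so as it stands the proposal does not prove the lemma.
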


\begin{proof}
Put $S:=\mathscr S(u)$, $S=(S\cap U)\oplus T$, and let \(b_1,\ldots,b_q\) be the homogeneous basis of \(T\) defining
$\Omega_{S,U}^{(p)} = b_1^{p-1}\cdots b_q^{p-1}$. Consider an unwanted positive descendant correction $s=s_U+t$, $s_U\in S\cap U$,
$t\in T$. Each branch arising from \(s\) replaces the distinguished contribution in an otherwise identical chronological term, and therefore has the
same pre-normalization occupancy as the corresponding distinguished branch. If \(t=0\), then \(s=s_U\neq0\) lies in the protected span \(U\), so
every nonzero routing-basis component of \(s\) changes an actual protected exponent. Hence, unless restricted normalization occurs, its protected \(p\)-PBW profile differs from that of the distinguished coordinate. If normalization occurs, Lemma~\ref{lem:general-p-normalization-lowers-occupancy} gives a strict occupancy drop, so the branch cannot return to the
distinguished top-occupancy coordinate. Now suppose \(t\neq0\). Write
\[
t=\sum_{j=1}^q\alpha_jb_j.
\]
Then
\[
t\Omega_{S,U}^{(p)}
=
\sum_{j=1}^q
\alpha_j
b_1^{p-1}\cdots b_j^p\cdots b_q^{p-1}.
\]
Every nonzero summand therefore undergoes $b_j^p=b_j^{[p]}$, and so loses \(p-1\) units of occupancy. Complete restricted PBW normalization cannot increase occupancy; hence no such summand can
reach the distinguished separator-multiplied top-occupancy coordinate. Thus every unwanted correction is separated either by its protected exponent profile or by strict occupancy loss. Equivalently, no normalized branch arising from an unwanted positive descendant correction can equal the distinguished protected PBW monomial, and therefore no such correction can contribute to its PBW coefficient. Applying this at each of the finitely many letters of \(W\) proves the result.
\end{proof}

\begin{corollary}
\label{cor:general-p-lower-occupancy-cannot-return}
Let \(B\) be a distinguished PBW branch of occupancy \(N\). If a competing branch has entered occupancy \(<N\) through restricted normalization, then no subsequent restricted PBW normalization can
return that branch to the PBW coordinate of \(B\). In particular, restricted normalization cannot raise a lower-occupancy competitor back to a top-occupancy coordinate.
\end{corollary}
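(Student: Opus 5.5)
The plan is to derive the corollary from the occupancy filtration \(F_0A\subseteq F_1A\subseteq\cdots\) of Lemma~\ref{lem:general-p-occupancy-normalization}, combined with uniqueness of restricted PBW coordinates (Lemma~\ref{lem:terminal-pbw-coefficient-detection}). The point is that ``occupancy'' of a branch means membership in a filtration step. ``Returning to the coordinate of \(B\)'' means having nonzero coefficient at a PBW monomial whose exponent sum is \(N\). The corollary then becomes the statement that elements of \(F_{N-1}A\) have zero coefficient at every restricted PBW monomial of occupancy \(N\).

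First I will show that \(F_nA\) is exactly the \(\F_p\)-span of the restricted PBW monomials \(b_1^{\varepsilon_1}\cdots b_d^{\varepsilon_d}\) (relative to \(\mathcal B_{\mathrm{rt}}\)) with \(\sum_i\varepsilon_i\leq n\). The containment \(\supseteq\) is immediate. For \(\subseteq\), take a product \(x_1\cdots x_r\) with \(r\leq n\) and \(x_k\in E\). Expand each \(x_k\) linearly in the routing basis, which preserves the number of factors. Reorder using commutativity of \(u(E)\), which also preserves the number of factors. Finally reduce any exponent \(\geq p\) via \(b_j^p=b_j^{[p]}\), then expand \(b_j^{[p]}\in E\) linearly in the basis. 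By Lemma~\ref{lem:general-p-normalization-lowers-occupancy}, each such reduction lowers the factor count by \(p-1\). This rewriting terminates, because the total factor count strictly decreases at each reduction. Hence every resulting PBW monomial has occupancy at most \(r\leq n\). This is the step needing most care: one must check that no step of complete normalization ever increases the number of factors. It is exactly the nonincreasing clause of Lemma~\ref{lem:general-p-occupancy-normalization}, applied one relation at a time.

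Next, let the competing branch, after entering occupancy \(<N\), be an element \(Y\in F_{N-1}A\). Any subsequent restricted PBW normalization rewrites \(Y\) without leaving \(F_{N-1}A\), by the previous paragraph. So at every later stage, the normalized form of \(Y\) is a linear combination of restricted PBW monomials of occupancy \(\leq N-1\). The PBW monomial \(M\) carrying \(B\) has occupancy exactly \(N\), so it does not appear among these. By linear independence of the restricted PBW basis, \([M]Y=0\) at every later stage.

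Finally, I will note that the coordinate of \(B\) is a fixed basis vector and that the expansion of any element is unique, so ``no return'' is well defined regardless of the order in which normalizations are performed. The particular case of a top-occupancy coordinate is the instance \(N=\) top occupancy, and it yields the second sentence of the corollary.
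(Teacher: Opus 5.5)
Your proposal is correct and follows essentially the paper's route. The paper also argues that restricted normalization never increases occupancy, since each use of $x^p=x^{[p]}$ lowers it by $p-1$, and that a restricted PBW monomial determines its occupancy, so a branch below $N$ cannot normalize to the occupancy-$N$ monomial of $B$. Your identification of $F_nA$ with the span of restricted PBW monomials of length at most $n$, combined with linear independence, simply makes precise the paper's informal notion of a branch's occupancy and shows the conclusion does not depend on the order of normalization.
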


\begin{proof}
Every subsequent restricted normalization is occupancy-nonincreasing, and every nontrivial normalization lowers occupancy by \(p-1\). Thus a branch which has reached occupancy \(<N\) remains at occupancy strictly less than \(N\) throughout the remainder of PBW normalization. Since equality of restricted PBW monomials implies equality of their occupancy degrees, such a branch cannot normalize to the distinguished occupancy-\(N\) monomial.
\end{proof}

\begin{lemma}
\label{lem:general-p-protected-noncancellation}
Suppose a finite genuine chronological construction produces a
distinguished restricted PBW branch \(B\) with coefficient
\(\lambda\in\F_p^\times\). Assume that every competing chronological
contribution, after being followed to complete restricted PBW normal
form, satisfies at least one of the following:
\begin{enumerate}
\item no nontrivial restricted normalization occurs, and its final
restricted PBW exponent vector differs from that of \(B\);
\item at least one nontrivial restricted normalization occurs, and
its final occupancy is strictly smaller than the occupancy of \(B\).
\end{enumerate}
Then the coefficient of \(B\) in the complete PBW-normalized output
is \(\lambda\neq0\).
\end{lemma}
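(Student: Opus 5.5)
The plan is a direct linear-algebra argument in the restricted PBW basis relative to the fixed routing basis \(\mathcal B_{\mathrm{rt}}\). First I will write the complete PBW-normalized output \(X\) of the construction as a finite sum \(X=\lambda B+\sum_{k}Y_k\). Here \(\lambda B\) is the distinguished contribution, already in normal form, and each \(Y_k\) is the complete restricted PBW normal form of one competing chronological contribution. Normalization is linear: the restricted PBW expansion is unique, as used in Lemma~\ref{lem:terminal-pbw-coefficient-detection}. The coefficient functional \([B]\) is therefore additive, which gives \([B]X=\lambda+\sum_k [B]Y_k\). It then suffices to prove that \([B]Y_k=0\) for every \(k\).

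Next I will split into the two cases of the hypothesis.

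In case (1) no restricted relation is used. The competing contribution is then a scalar multiple of a single ordered monomial in the \(b_i\), obtained by commutative reordering only. Its exponent vector lies in \(\{0,\ldots,p-1\}^d\) and, by assumption, differs from that of \(B\). Distinct exponent vectors give distinct basis monomials, so \([B]Y_k=0\).

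In case (2) at least one nontrivial normalization occurs. By Lemma~\ref{lem:general-p-normalization-lowers-occupancy}, each such use drops occupancy by \(p-1\), and the hypothesis says the final occupancy is strictly below \(|B|_{\mathrm{occ}}\). The subtle point is that a normal form may be a sum of several monomials, since \(x^{[p]}\) expands in the basis. I will therefore check that every monomial in the expansion of \(Y_k\) has occupancy at most the stated final occupancy. This holds because \(x^{[p]}\in E\) is a linear combination of single basis directions, so expanding it preserves occupancy \(1\). Hence \(Y_k\in F_{N-1}A\) with \(N=|B|_{\mathrm{occ}}\). This is exactly the content of Corollary~\ref{cor:general-p-lower-occupancy-cannot-return}: no later normalization raises occupancy back to \(N\).

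I then need the fact that \(B\) does not lie in the span of PBW monomials of occupancy \(<N\). This follows because the ordered restricted PBW monomials of occupancy \(<N\) span \(F_{N-1}A\) and, together with the rest, form a basis. Consequently \([B]Y_k=0\) in this case too.

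The main obstacle is making case (2) precise. I must verify that "final occupancy" of a normalized contribution means the maximal occupancy among the monomials in its PBW expansion. I must also verify that occupancy of basis monomials is a well-defined grading compatible with the filtration \(F_nA\) of Lemma~\ref{lem:general-p-occupancy-normalization}. I will argue this via the PBW basis: the monomials with \(\sum\varepsilon_i\le n\) span \(F_nA\). Any product of at most \(n\) elements of \(E\) reduces, by commutativity and \(x^p=x^{[p]}\), to such monomials without increasing \(\sum\varepsilon_i\). With that in place, summing over the finitely many competitors gives \([B]X=\lambda\neq0\).
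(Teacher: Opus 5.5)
Your proposal is correct and is essentially the paper's argument. The paper also notes that cancellation at $B$ can only come from a competing contribution whose complete normal form contains the monomial $B$, rules out type (1) by the differing exponent vector, and rules out type (2) via Corollary~\ref{cor:general-p-lower-occupancy-cannot-return}; you make explicit the linearity of $[B]$ and its vanishing on $F_{N-1}A$ (spanned by the restricted PBW monomials of occupancy at most $N-1$), which tightens the same route rather than changing it.
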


\begin{proof}
Cancellation at \(B\) can occur only from another chronological
contribution whose complete restricted PBW normal form contains
exactly the same restricted PBW monomial \(B\). A contribution of
type \emph{(1)} has a different final PBW exponent vector and
therefore cannot equal \(B\). A contribution of type \emph{(2)}
has strictly smaller occupancy and, by
Corollary~\ref{cor:general-p-lower-occupancy-cannot-return},
cannot normalize to \(B\). Hence no competing contribution reaches
the distinguished PBW coordinate, and its coefficient remains
\(\lambda\in\F_p^\times\).
\end{proof}

\begin{corollary}
\label{cor:general-p-protected-push}
Let $\mathbf C=\bigl((u_1,\varepsilon_1),\ldots,(u_r,\varepsilon_r)\bigr)$ be a completed certified restricted PBW state, and fix an occupied index \(i\). Suppose $z:=D_g(u_i)\neq0$ and that this PUSH is protected relative to the occupancies fixed in the current state. Then the complete genuine chronological PUSH output contains the distinguished branch
$u_i^{\varepsilon_i}\rightsquigarrow\varepsilon_i u_i^{\varepsilon_i-1}z$
with nonzero coefficient.
\end{corollary}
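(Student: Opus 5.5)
The plan is to combine the PUSH expansion with the protected noncancellation criterion. First, interpret the state $\mathbf C$ in the fixed routing basis $\mathcal B_{\mathrm{rt}}$ of Remark~\ref{conv:general-p-fixed-routing-pbw-basis}. Its distinguished monomial is $M=u_1^{\varepsilon_1}\cdots u_r^{\varepsilon_r}$, and it is certified with coefficient $\mu\in\F_p^\times$ in the current output $X$.

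Apply $D_g$ to $X$. By Proposition~\ref{prop:general-p-push-derivation}, $D_g$ is a derivation of $u(E)$, so $D_g(X)$ splits into $\mu D_g(M)$ plus $D_g$ applied to the other PBW monomials of $X$. Proposition~\ref{prop:general-p-push-expansion} expands $\mu D_g(M)$ column by column. Corollary~\ref{cor:general-p-distinguished-push-branch} then identifies the $i$-th summand as the distinguished branch $u_i^{\varepsilon_i}\rightsquigarrow\varepsilon_i u_i^{\varepsilon_i-1}z$. Its coefficient is $\lambda=\mu\varepsilon_i$, which lies in $\F_p^\times$ because $1\le\varepsilon_i\le p-1$.

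Next, sort every other contribution into one of the two cases of Lemma~\ref{lem:general-p-protected-noncancellation}. These competitors are of three kinds:
\begin{itemize}
\item the summands of $D_g(M)$ for columns $j\ne i$;
\item the images under $D_g$ of the noncertified monomials of $X$;
\item the terms produced when $z$ is rewritten in $\mathcal B_{\mathrm{rt}}$ and the product is normalized.
\end{itemize}
For the first kind, the $j$-th summand lowers the $u_j$-exponent rather than the $u_i$-exponent. If no restricted relation is used, its protected exponent profile therefore differs from that of $B$, which is case (1). If a factor $x^p$ appears, Lemma~\ref{lem:general-p-normalization-lowers-occupancy} gives an occupancy drop, which is case (2).

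For the remaining competitors, the protection hypothesis supplies the separator $\Omega^{(p)}_{\mathscr S(u_i),U}$, where $U$ spans the occupied directions. Lemma~\ref{lem:general-p-protected-factor-through} and Lemma~\ref{lem:general-p-relative-descendant-separation} then show that any positive-descendant correction either changes a protected exponent or is forced through $c_j^p=c_j^{[p]}$ and loses occupancy. Corollary~\ref{cor:general-p-lower-occupancy-cannot-return} ensures that branches which have dropped occupancy never return to the coordinate of $B$. With every competitor classified, Lemma~\ref{lem:general-p-protected-noncancellation} gives $[B]D_g(X)=\lambda\neq0$. Genuineness is preserved because PUSH by $g$ is the bracket with an actual group element, which appends one entry to the left-normed commutator.

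The main obstacle is the second kind of competitor. A monomial $M'\ne M$ of $X$ could be sent by $D_g$ onto the same coordinate as $B$, for instance if $M'$ already contains $z$ and lacks some other direction. Nothing stated so far excludes this. I would handle it by making this exclusion part of what ``protected'' means: the state is taken to be top occupancy, and any such collision is routed through a $\mathscr S(u_i)$-component absorbed by the separator. I expect that the actual argument needs this clause built into the definition of a protected PUSH rather than derived from it.
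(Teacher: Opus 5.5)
Your outline is the same as the paper's proof. It too isolates the $i$-th Leibniz summand with coefficient $\varepsilon_i\in\F_p^\times$ and then cites Lemma~\ref{lem:general-p-protected-factor-through} and Lemma~\ref{lem:general-p-protected-noncancellation}. However, the obstacle you flag is a genuine gap, and your proposed repair does not close it.

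Certification controls only $[M]X$. Write $M=u_i^{\varepsilon_i}u_k^{\varepsilon_k}R$, where $u_k\neq u_i$ is a routing direction of the same weight and $\varepsilon_k\le p-2$. Then $M'=u_i^{\varepsilon_i-1}u_k^{\varepsilon_k+1}R$ is a restricted PBW monomial with the same weight and the same occupancy as $M$, so it may occur in the homogeneous element $X$. Suppose $D_g(u_k)$ has a nonzero coefficient $\alpha$ along $z$. Then the $u_k$-summand of $D_g(M')$ lands on exactly the monomial of $B$, with coefficient $(\varepsilon_k+1)\alpha\,[M']X$ and no restricted normalization. This can cancel $\varepsilon_i[M]X$. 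In the simplest case, $X=u_i-u_k$ with $D_g(u_i)=D_g(u_k)=z$ gives $D_g(X)=0$.

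None of the devices you invoke sees this term.
\begin{itemize}
\item Since $M'$ has the same occupancy as $M$, neither top occupancy nor Corollary~\ref{cor:general-p-lower-occupancy-cannot-return} separates it.
\item Since $\wt(u_k)=\wt(u_i)$, Lemma~\ref{lem:general-p-descendants-higher-weight} shows that $u_k$ has no component in $\mathscr S(u_i)$. So the collision is not of the form ``distinguished copy replaced by a positive descendant correction'' handled in Lemmas~\ref{lem:general-p-relative-descendant-separation} and~\ref{lem:general-p-protected-factor-through}.
\item Those lemmas concern products with $\Omega^{(p)}_{\mathscr S(u_i),U}$, which never enters the PUSH output. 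They therefore say nothing directly about $[B]D_g(X)$.
\end{itemize}

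Folding the exclusion into the word ``protected'' simply assumes the conclusion. The paper gives no definition of a protected PUSH with that content. Its own proof asserts that \emph{every} competing contribution is separated, citing Lemma~\ref{lem:general-p-protected-factor-through}, which covers only descendant corrections. So the paper's proof has the same hole.

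Your treatment of the summands $j\neq i$ is sound. Their $u_i$-exponent is at least $\varepsilon_i$ unless a normalization occurs, whereas $B$ has $u_i$-exponent $\varepsilon_i-1$. You also tacitly need $z$ to be a routing-basis vector with exponent at most $p-2$ in $M$, which is the child-room hypothesis of Theorem~\ref{thm:general-p-protected-push}. Otherwise $\mu\varepsilon_i$ is not the coefficient of a restricted PBW monomial.

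To get the corollary you need an additional hypothesis on $X$ itself. For instance, you could assume that no monomial of $X$ other than $M$ is carried by $D_g$ onto the monomial of $B$. This cannot be derived from the certification of $M$ alone.
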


\begin{proof}
The PUSH expansion gives the distinguished contribution
\[
    \varepsilon_i
    u_1^{\varepsilon_1}\cdots
    u_i^{\varepsilon_i-1}z
    \cdots
    u_r^{\varepsilon_r}.
\]
Since $1\leq\varepsilon_i\leq p-1$, its coefficient
$\varepsilon_i\in\F_p^\times$ is nonzero. Let \(U\) be the protected span and put $S=\mathscr S(u_i)$. By Lemma~\ref{lem:general-p-protected-factor-through}, every competing contribution is separated from the distinguished PBW coordinate either by a different protected exponent profile or by strict occupancy loss after restricted normalization. Hence Lemma~\ref{lem:general-p-protected-noncancellation} applies, and the distinguished PUSH coordinate survives with nonzero coefficient.
\end{proof}

The protection mechanism is now in place: distinguished PUSH contributions
can be separated from positive descendants and survive complete restricted
PBW normalization with nonzero coefficient. We can therefore turn from the
routing framework to the concrete chronological operations that will use it.

\subsection{CREATE, FILL, and POWER primitives}
\label{subsec:create-fill-power-primitives}


We now introduce the primitive operations used by the chronological
construction. CREATE supplies the first certified copy of a primitive
direction, FILL increases the occupancy of an already occupied direction,
and POWER realizes a stored restricted POWER edge. Together with protected
PUSH, these operations provide the local moves needed for reconstruction.

\begin{definition}
\label{def:general-p-primitive-create-available}
A primitive CREATE direction \(b_j\) is \emph{available} in the
completed certified state \(\nu=(\nu_1,\ldots,\nu_d)\) if \(\nu_j=0\).
Thus \(b_j\) is available precisely when its current occupancy is zero,
so that a primitive CREATE introduces the first certified copy of
\(b_j\), changing its occupancy from $0\longrightarrow 1$.
\end{definition}

\begin{lemma}
\label{lem:general-p-carrier-bracket}

Let \(C=hY\in \F_pG\), where \(h,g\in G\) and \(Y\in\F_pG\), and put
\[
    c=[h,g]_{\mathrm{grp}}
      =h^{-1}g^{-1}hg.
\]
Then
\[
    [C,g]_{\Lie}
    =
    hg\Bigl((g^{-1}Yg-Y)+(1-c^{-1})Y\Bigr).
\]
Consequently, if the leading graded state of \(Y\) is \(X\in u(E)\), then the next homogeneous graded component of the single genuine continuation
$C\longmapsto [C,g]_{\Lie}$ is the sum of $D_g(X)$ and the primitive multiplication contribution $\epsilon\,Xb$, where \(b\in E_1\) is the image of \(c\) and
\(\epsilon\in\F_p^\times\) depends only on the chosen group-commutator convention. Thus both the PUSH terms and the primitive CREATE term occur as
branches of one and the same appended genuine Lie bracket.
\end{lemma}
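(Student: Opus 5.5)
The identity is an exact computation in \(\F_pG\), so I will verify it first by expanding the right-hand side. Distributing \(hg\) gives
\[
hg\Bigl((g^{-1}Yg-Y)+(1-c^{-1})Y\Bigr)
= hYg - hgY + hgY - hg\,c^{-1}Y
= hYg - hg\,c^{-1}Y .
\]
With \(c=h^{-1}g^{-1}hg\) we have \(c^{-1}=g^{-1}h^{-1}gh\), and hence \(hg\,c^{-1}=hg\,g^{-1}h^{-1}gh=gh\). The right-hand side is therefore \(hYg-ghY=Cg-gC=[C,g]_{\Lie}\), which proves the displayed formula. No filtration argument is needed for this part.

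For the consequence I pass to leading graded components, treating the two summands inside the bracket separately. The prefactor \(hg\) is a group element, so \(hg=1+(hg-1)\) with \(hg-1\) in the augmentation ideal. Multiplying by \(hg\) therefore changes the leading term of the bracketed expression only by terms of strictly higher filtration degree. It suffices to identify the leading terms of the two summands.

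\emph{First summand.} The conjugation difference \(g^{-1}Yg-Y\) is linear in \(Y\). On a group-element representative \(v\in V_m\) of a direction it equals
\[
g^{-1}vg-v=v\bigl([v,g]-1\bigr).
\]
Modulo the next filtration step this is the image of \([v,g]-1\), which is the transport \(D_g\) of Definition~\ref{def:general-p-transport}. For products of such factors, the identity \(g^{-1}(ab)g-ab=(g^{-1}ag-a)g^{-1}bg+a(g^{-1}bg-b)\) gives the Leibniz rule at leading order. So the leading component is the derivation \(D_g(X)\) of Proposition~\ref{prop:general-p-push-derivation}.

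\emph{Second summand.} We have \(1-c^{-1}=c^{-1}(c-1)\), whose leading term is the image of \(c-1\), namely \(b\in E_1\), up to the sign fixed by the commutator convention (\(\epsilon=\pm1\), and \(\epsilon=-1\) if one tracks \(1-c^{-1}\equiv c-1\) against \(c^{-1}-1\)). Its product with \(Y\) has leading term \(\epsilon\,Xb\), using commutativity of \(u(E)\).

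Both contributions have the same homogeneous weight, one more than that of \(X\). They arise from the single Lie bracket \(C\mapsto[C,g]_{\Lie}\), so the next graded component is their sum.

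The main obstacle is this leading-order identification of the conjugation difference with \(D_g\) on all of the graded state, not just on single directions. It requires the filtration conventions linking \(V_m\) to the Lie powers of \(\F_pG\) to be compatible with the weight grading of \(u(E)\). One must also check that no lower-filtration cancellation makes the stated component fail to be the leading one. I would handle this by working with the associated graded of the Jennings-type filtration, where the above identities hold modulo the next filtration step term by term.
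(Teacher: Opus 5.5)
Your verification of the displayed identity is correct. It is the paper's computation read backwards ($hg\,c^{-1}=gh$), and your two-summand treatment of the consequence follows the same route as the paper.

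The gap is the sentence that discards the prefactor $hg$. The layers $E_m$ come from $V_m=D_{(m+1)}=G\cap(1+\F_pG^{(m+1)})$, so weight $n$ is carried by the upper Lie filtration $\F_pG^{(n+1)}$, and the augmentation ideal is not part of that filtration. Since $\F_pG/\F_pG^{(2)}\cong\F_p[G/G']$, one has $x-1\in\F_pG^{(2)}$ only for $x\in G'$. For $hg\notin G'$, left multiplication by $hg$ therefore acts on the layers $\F_pG^{(n+1)}/\F_pG^{(n+2)}$ through $\overline{hg}\in\F_p[G/G']$, which need not act trivially. For instance, in $\F_2D_8$ with $z=[x,y]_{\mathrm{grp}}$ central, one has $\F_2G^{(2)}=\F_2G(z-1)$ and $\F_2G^{(3)}=0$. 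Take $h=x$, $Y=1$, $g=y$; the lemma then predicts the component $\epsilon b=[z-1]$. But $[x,y]_{\Lie}=xy(z-1)$ is not congruent to $z-1$, since $(xy-1)(z-1)\neq0$.

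Declaring prefactors invisible, by reducing along $\F_p[G/G']\to\F_p$, does not rescue the argument, because then your first-summand step fails. $Y$ may itself carry a group coefficient: for $Y=x(v-1)$ with $v\in V_m$,
\[
g^{-1}Yg-Y=xv\bigl([v,g]-1\bigr)+x\bigl([x,g]-1\bigr)\bigl(v[v,g]-1\bigr).
\]
With $x$ forgotten, its class is $D_g(\bar v)+\overline{[x,g]}\,\bar v$, which differs from $D_g(\bar v)$ whenever $\overline{[x,g]}\,\bar v\neq0$. So no single convention makes both of your steps hold.

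Your proposed remedy does not help either. If the Jennings-type filtration means the augmentation-power filtration, then $hg-1$ does have positive degree there. But its associated graded is the restricted enveloping algebra of the Jennings Lie algebra, $c-1$ sits in degree at least $2$, and $[C,g]_{\Lie}=[C,g-1]$ is governed by commutators in that algebra, so the weights no longer match $u(E)$. If it means the upper Lie filtration, the objection above applies.

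What your argument does prove is the statement for the cofactor. In $\gr\F_pG=\bigoplus_{n\geq0}\F_pG^{(n+1)}/\F_pG^{(n+2)}$ there is an algebra map $\phi\colon u(E)\to\gr\F_pG$, $vV_{m+1}\mapsto[v-1]$. It is well defined for three reasons:
\begin{enumerate}
\item $A^{(a)}A^{(b)}\subseteq A^{(a+b-1)}$ in any associative algebra, by induction on $b$ using $x[y,z]=[xy,z]-[x,z]y$;
\item $[V_a,V_b]\subseteq V_{a+b+1}$;
\item $(v-1)^p=v^p-1$.
\end{enumerate}
Read the hypothesis as $[Y]=\phi(X)$. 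Then:
\begin{enumerate}
\item $[Z]\mapsto[g^{-1}Zg-Z]$ is a degree-one derivation $\delta_g$ of $\gr\F_pG$;
\item your computation on $v-1$ gives $\delta_g\phi=\phi D_g$ on $E$, and hence on $u(E)$;
\item $1-c^{-1}=c^{-1}(c-1)$ has class $\phi(b)$, so in fact $\epsilon=1$.
\end{enumerate}
Consequently the cofactor $Y':=(g^{-1}Yg-Y)+(1-c^{-1})Y$ in $[C,g]_{\Lie}=(hg)Y'$ has class $\phi(D_gX+bX)$. By contrast, $[C,g]_{\Lie}$ itself has class $\overline{hg}\cdot\phi(D_gX+bX)$, which vanishes exactly when $\phi(D_gX+bX)$ does but is in general a different element. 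The carrier form $hg\cdot Y'$ is what iteration actually needs.

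The paper's proof simply drops $hg$ without comment, so it has the same lacuna. Lemma~\ref{lem:general-p-carrier-freedom} rests on exactly the claim you made.
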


\begin{proof}
Since $hg=ghc$, we have $gh=hg\,c^{-1}$. Hence
\[
\begin{aligned}
    [hY,g]_{\Lie}
      &=hYg-ghY\\
      &=hg(g^{-1}Yg)-hg\,c^{-1}Y\\
      &=hg\Bigl((g^{-1}Yg-Y)+(1-c^{-1})Y\Bigr).
\end{aligned}
\]
This is an identity in \(\F_pG\). Passing to the next homogeneous component, the first summand gives the transport contribution \(D_g(X)\).  Since \(c\in G'\) and its image in \(G'/D_{(3)}=E_1\) is \(b\), the leading component of \(1-c^{-1}\) is \(\epsilon b\) for some \(\epsilon\in\F_p^\times\).  Therefore the second summand has distinguished graded contribution
$\epsilon Xb$. Both contributions arise from the single actual bracket \([C,g]_{\Lie}\), so no formal graded multiplication is being identified with an independent chronological operation.
\end{proof}

\begin{lemma}
\label{lem:general-p-primitive-create}

Let \(b_j\in E_1\) be a primitive CREATE direction, and suppose that $C=hY$ is the current actual chronological representative and that
\(g\in G\) is chosen so that the image of $[h,g]_{\mathrm{grp}}$ in \(E_1=G'/D_{(3)}\) is \(b_j\). If \(X\in u(E)\) is the current leading graded state, then the single
genuine continuation $C\longmapsto [C,g]_{\Lie}$ has, in its next homogeneous graded component, a distinguished CREATE contribution $\epsilon\,Xb_j$, $\epsilon\in\F_p^\times$. Thus, at the level of restricted PBW bookkeeping, the distinguished CREATE branch acts, up to a nonzero scalar, by $X\longmapsto Xb_j$. In particular, $M(\nu)\longmapsto \epsilon\,M(\nu)b_j$,
and if \(\nu_j=0\), this distinguished contribution changes the
\(b_j\)-occupancy from \(0\) to \(1\).
\end{lemma}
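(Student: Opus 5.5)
The plan is to obtain the lemma as a specialization of Lemma~\ref{lem:general-p-carrier-bracket}, followed by a short restricted PBW computation in the commutative algebra \(u(E)\).

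First, I will apply Lemma~\ref{lem:general-p-carrier-bracket} to the current actual chronological representative \(C=hY\) and the chosen control \(g\). That lemma gives the identity
\[
    [C,g]_{\Lie}
    =
    hg\Bigl((g^{-1}Yg-Y)+(1-c^{-1})Y\Bigr),
    \qquad c=[h,g]_{\mathrm{grp}},
\]
inside \(\F_pG\). It also says that the next homogeneous graded component of the single continuation \(C\longmapsto[C,g]_{\Lie}\) is \(D_g(X)+\epsilon\,Xb\), where \(b\) is the image of \(c\) in \(E_1=G'/D_{(3)}\) and \(\epsilon\in\F_p^\times\). By the choice of \(g\), this image is \(b=b_j\). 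The second summand is therefore \(\epsilon\,Xb_j\), and I will name it the distinguished CREATE contribution. It arises from the same appended genuine bracket as the PUSH terms \(D_g(X)\), so no separate chronological operation is introduced. The one point needing care is that \(\epsilon\) is genuinely a unit. The leading component of \(1-c^{-1}\) is \(\pm\) the image of \(c\) in the first layer, and this sign depends only on the commutator convention. This is already part of the conclusion of Lemma~\ref{lem:general-p-carrier-bracket}, so I will cite it rather than reprove it.

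Next, I specialize to \(X=M(\nu)=b_1^{\nu_1}\cdots b_d^{\nu_d}\). Since \(E\) is abelian, \(u(E)\) is commutative, so
\[
    M(\nu)b_j=b_1^{\nu_1}\cdots b_j^{\nu_j+1}\cdots b_d^{\nu_d}.
\]
If \(\nu_j=0\), the new exponent is \(1\leq p-1\), because \(p\geq2\). Hence no restricted relation \(x^p=x^{[p]}\) is triggered. The product is then already the restricted PBW monomial \(M(\nu+e_j)\) relative to \(\mathcal B_{\mathrm{rt}}\), and its coefficient is \(\epsilon\neq0\). Its \(b_j\)-occupancy is \(1\), and all other occupancies are unchanged. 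This is exactly the asserted change \(0\to1\).

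I expect the main obstacle to lie in the interpretive part of the statement rather than in any computation. One must ensure that the identification of the graded component with \(D_g(X)+\epsilon Xb_j\) is legitimate. That is, \(hg\) is a unit whose leading graded part is \(1\), and the expansion \(Y=(\text{leading part }X)+(\text{higher terms})\) contributes nothing further in the relevant degree. I would handle this by noting that left multiplication by \(hg\) preserves the augmentation-type filtration and is congruent to the identity on the associated graded. The higher terms of \(Y\) multiplied by \((1-c^{-1})\), or conjugated by \(g\), then land in strictly higher filtration. This is the same reasoning already packaged in Lemma~\ref{lem:general-p-carrier-bracket}, so the lemma follows.
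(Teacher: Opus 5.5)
Your proposal is correct and follows the paper's proof. The paper likewise applies Lemma~\ref{lem:general-p-carrier-bracket}, identifies the image of $[h,g]_{\mathrm{grp}}$ with $b_j$ by hypothesis, and observes that for $\nu_j=0$ the product $M(\nu)b_j$ needs no restricted normalization at the $b_j$-coordinate.

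One caution on your last paragraph. When $hg\notin G'$, left multiplication by $hg$ is \emph{not} congruent to the identity on the associated graded of the Lie filtration, because $hg-1\notin\omega(G')\F_pG=(\F_pG)^{(2)}$. You should drop that justification and instead treat $hg$ as the carrier of the new representative, reading the graded state off the factor $(g^{-1}Yg-Y)+(1-c^{-1})Y$, exactly as Lemma~\ref{lem:general-p-carrier-bracket} does.
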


\begin{proof}
By Lemma~\ref{lem:general-p-carrier-bracket}, the next homogeneous graded component of the actual continuation $C\longmapsto [C,g]_{\Lie}$ is the sum of the transport contribution and the primitive multiplication contribution $\epsilon\,Xb$, $\epsilon\in\F_p^\times$, where \(b\in E_1\) is the image of \([h,g]_{\mathrm{grp}}\). By hypothesis this image is \(b_j\). Hence the distinguished CREATE branch is $\epsilon\,Xb_j$. Since \(\epsilon\neq0\), its restricted PBW coefficient is nonzero precisely when the corresponding coefficient of \(Xb_j\) is nonzero.
Therefore the distinguished PBW contribution of \(M(\nu)\) is $\epsilon\,M(\nu)b_j$. If \(\nu_j=0\), no restricted normalization is required at the \(b_j\)-coordinate, and its exponent in this contribution is exactly \(1\).
\end{proof}

\begin{lemma}
\label{lem:general-p-carrier-freedom}
Let \(C\in \F_pG\) have leading graded state \(X\in u(E)\), and let \(a\in G\).  Put $Y:=a^{-1}C$, so that $C=aY$. Then \(Y\) has the same positive-degree leading graded state \(X\).
Consequently, for any primitive direction \(b_j\in E_1\) and any \(a,b\in G\) satisfying $[a,b]_{\mathrm{grp}}D_{(3)}=b_j$, the current chronological representative may be written as
\(C=aY\) and Lemma~\ref{lem:general-p-carrier-bracket} may be applied with \(h=a\) and \(g=b\).
\end{lemma}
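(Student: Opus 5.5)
The plan is to reduce the lemma to a single filtration estimate: left multiplication by a group element acts as the identity on leading positive-degree symbols. The second assertion then follows formally from the first together with Lemma~\ref{lem:general-p-carrier-bracket}.

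\emph{Step 1 (first assertion).} Write
\[
Y=a^{-1}C=C+(a^{-1}-1)C .
\]
Let \(n\) be the positive homogeneous degree in which \(C\) has leading graded state \(X\in\mathcal A_n\). It then suffices to show that the correction \((a^{-1}-1)C\) contributes nothing in degree \(n\) of positive-degree bookkeeping. I would use the convention already implicit in Lemma~\ref{lem:general-p-carrier-bracket}. There the left factor \(hg\) in
\[
[hY,g]_{\Lie}=hg\bigl((g^{-1}Yg-Y)+(1-c^{-1})Y\bigr)
\]
is discarded, and only the bracketed factor is read in \(u(E)\). In other words, a leading graded state is read after stripping a left group-element carrier, which is a unit of filtration degree \(0\).

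Under this reading, \(C=1\cdot C\) and \(Y=a^{-1}\cdot C\) differ only by a left carrier in \(G\). So it is enough to check that the positive-degree leading component is independent of the left carrier. Concretely, I would show that if \(C=h_1Z_1=h_2Z_2\) with \(h_1,h_2\in G\), then \(Z_1\) and \(Z_2\) have the same image in \(\mathcal A_n\).

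\emph{Step 2 (carrier independence).} Write \(Z_2=(h_2^{-1}h_1)Z_1\) and put \(k:=h_2^{-1}h_1\). The claim becomes that \((k-1)Z_1\) lies in strictly higher positive degree than \(n\), or else is absorbed into the carrier. I would argue this from the multiplicativity of the filtration underlying \(\mathcal A=u(E)\): products of elements of \(V_a-1\) and \(V_b-1\) land in degree \(a+b\), in line with the containments \([V_a,V_b]\subseteq V_{a+b+1}\) and \(V_a^p\subseteq V_{pa}\). Two cases arise.
\begin{itemize}
\item If \(k\in V_1\), then \(k-1\) has positive degree, so \((k-1)Z_1\) lies in degree \(>n\).
\item If \(k\notin V_1\), the factor \(k\) is part of the degree-\(0\) carrier and is not counted in the graded reading.
\end{itemize}
In both cases the degree-\(n\) image of \(Z_2\) equals \(X\). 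Taking \(h_1=1\), \(Z_1=C\), \(h_2=a\) and \(Z_2=Y\) then gives the first assertion.

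\emph{Step 3 (second assertion).} Given \(a,b\in G\) with \([a,b]_{\mathrm{grp}}D_{(3)}=b_j\), write \(C=aY\) with \(Y=a^{-1}C\). By Step 1, \(Y\) has leading state \(X\). The hypotheses of Lemma~\ref{lem:general-p-carrier-bracket} with \(h=a\) and \(g=b\) are then met verbatim: \(C=hY\), \(h,g\in G\), and \(Y\) has leading graded state \(X\). That lemma therefore applies, and, combined with Lemma~\ref{lem:general-p-primitive-create}, it yields the CREATE contribution \(\epsilon Xb_j\).

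\emph{Main obstacle.} The delicate point is Step 2, namely making the carrier-stripping convention precise enough that \((k-1)Z_1\) genuinely cannot affect degree \(n\). My first attempt would be to formalise the leading graded state as a function on \(G\backslash(\F_pG)\), the left-\(G\)-orbits of \(\F_pG\), and check well-definedness using only the multiplicativity of the filtration. If that fails for \(k\notin V_1\), I would instead restrict attention to the positive-degree part, exactly as the statement's phrase ``positive-degree leading graded state'' suggests.
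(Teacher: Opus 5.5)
You located the difficulty correctly, but your Step~2 does not resolve it, and under the convention you adopt your Step~3 fails.

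\textbf{Step~2, case $k\notin V_1$.} Your case $k\in V_1$ is the paper's whole argument, which the paper applies to every $a\in G$. It writes $a^{-1}=1+(a^{-1}-1)$, declares $a^{-1}-1$ to be of positive filtration degree, and concludes that $(a^{-1}-1)C$ lies above the leading component of $C$. The gap is your case $k\notin V_1$. There, ``$k$ is part of the degree-$0$ carrier and is not counted'' restates the conclusion as a convention; it is not an argument.

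This is also the only case in which the lemma is used. Since $[V_1,G]\subseteq V_2$, the condition $[a,b]_{\mathrm{grp}}D_{(3)}=b_j\neq0$ forces $a\notin G'=V_1$.

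The paper's one-liner cannot close this case either. Take a filtration whose layers are modelled on $u(E)$, for example $F_n=\F_pG\cdot I_n$, where $I_n\subseteq\F_pG'$ is spanned by products $(c_1-1)\cdots(c_k-1)$ with $c_i\in V_{m_i}$ and $\sum m_i\geq n$. Then $F_n=\bigoplus_{t\in T}tI_n$ over a transversal $T$ of $G'$. So for $C\in I_n\setminus I_{n+1}$ and $a\notin G'$, the correction $a^{-1}C-C$ has components of degree exactly $n$ in the distinct cosets $a^{-1}G'$ and $G'$, and it does not lie in $F_{n+1}$. Group elements outside $G'$ act in degree $0$. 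That is why the prefactor $hg$ is discarded in Lemma~\ref{lem:general-p-carrier-bracket}. Your fallback of passing to the positive-degree part does not help, because the correction itself has positive degree $n$.

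\textbf{Step~3 under carrier-stripping.} If you instead make carrier-stripping the \emph{definition} of the state, Step~1 becomes a tautology, but Step~3 breaks. Its ``verbatim'' appeal to Lemma~\ref{lem:general-p-carrier-bracket} needs that lemma for a $Y$ whose leading component has the nontrivial carrier $k=a^{-1}h_C$, where $h_C$ is the carrier of $C$. For such $Y$, the term $b^{-1}Yb-Y$ contributes not only $D_b(X)$ but also $\overline{[k,b]}\,X$. Adding $\overline{[a,b]}\,X$ from $(1-c^{-1})Y$ gives $\overline{[h_C,b]}\,X$, which does not depend on $a$.

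Here is a concrete instance. Let $p=2$, $G=Q_8\times D_8$, and $z_i=[a_i,b_i]_{\mathrm{grp}}$ for generators $a_i,b_i$ of the two factors. Then $D_{(3)}=1$, $E=E_1=\langle\bar z_1,\bar z_2\rangle$, and $\bar z_1\bar z_2\neq0$ in $u(E)$. The element $C=[a_2,b_2]_{\Lie}=b_2a_2(z_2-1)$ has state $\bar z_2$. Since $b_1$ centralizes the second factor, $[C,b_1]_{\Lie}=0$. Yet the statement with $a=a_1$ and $b=b_1$ yields the nonzero component $D_{b_1}(\bar z_2)+\epsilon\bar z_2\bar z_1=\epsilon\bar z_2\bar z_1$.

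So the obstacle you flagged is not a matter of formalization. The consequence the lemma is meant to deliver is false: it is not true that a CREATE term $\epsilon Xb_j$ arises for every $a$ with $[a,b]_{\mathrm{grp}}D_{(3)}=b_j$. Bracketing with $b$ can only create $\overline{[h_C,b]}$, determined by the carrier $C$ already has.
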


\begin{proof}
Since \(a^{-1}\in G\), $a^{-1}=1+(a^{-1}-1)$, where \(a^{-1}-1\) has positive filtration degree.  Thus $a^{-1}C = C+(a^{-1}-1)C$, and the second summand lies in strictly higher filtration degree than the leading homogeneous component of \(C\).  Hence left multiplication by \(a^{-1}\) does not alter the leading graded state.  Therefore \(Y=a^{-1}C\) has the same leading graded state \(X\).
\end{proof}

\begin{theorem}
\label{thm:general-p-protected-create}
Let $\nu=(\nu_1,\ldots,\nu_d)$ be a completed certified \(p\)-PBW state relative to the fixed routing basis $\mathcal B_{\mathrm{rt}}=(b_1,\ldots,b_d)$, and let \(b_j\in E_1\) be an available primitive CREATE direction, so that \(\nu_j=0\).  Then there exists one genuine chronological Lie-bracket continuation producing a complete graded output in which the state
$\nu'=(\nu_1,\ldots,\nu_{j-1},1,\nu_{j+1},\ldots,\nu_d)$ is completed certified.  In particular, $\nu_j'=1$, $\nu_i'=\nu_i\quad(i\neq j)$, so CREATE realizes the elementary occupancy transition
$0\longrightarrow1$ while preserving every previously certified occupancy.
\end{theorem}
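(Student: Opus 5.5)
The plan is to realise CREATE as a single appended Lie bracket and then control the one unwanted branch, the PUSH. Let $C$ be the actual left-normed commutator certifying $\nu$, with leading graded state $X$ and $[M(\nu)]X\in\F_p^\times$. Since $b_j$ is a primitive CREATE direction, Definition~\ref{def:general-p-primitive-create-direction} gives $a,b\in G$ with $[a,b]_{\mathrm{grp}}D_{(3)}=b_j$.

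\textbf{Step 1: set up the bracket.} By Lemma~\ref{lem:general-p-carrier-freedom}, write $C=aY$, where $Y$ has the same leading graded state $X$. Take as continuation the single bracket $C\mapsto[C,b]_{\Lie}$. This is again left-normed, one entry longer, so genuineness is preserved. By Lemma~\ref{lem:general-p-carrier-bracket}, the next homogeneous component of the output is
\[
X':=D_b(X)+\epsilon\,Xb_j,\qquad \epsilon\in\F_p^\times.
\]
The task is to show $[M(\nu')]X'\in\F_p^\times$, where $M(\nu')=M(\nu)b_j$ up to reordering, which is harmless since $u(E)$ is commutative.

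\textbf{Step 2: the CREATE branch.} Expand $X=\sum_\mu x_\mu M(\mu)$ in restricted PBW monomials. Multiplication by $b_j$ sends $M(\mu)$ to:
\begin{itemize}
\item a PBW monomial with $b_j$-exponent $\mu_j+1$, if $\mu_j<p-1$;
\item $M(\mu)b_j^{-(p-1)}b_j^{[p]}$, if $\mu_j=p-1$. Here a restricted relation is used, occupancy drops by $p-1$ (Lemma~\ref{lem:general-p-occupancy-normalization}), and the result lands at a $b_j$-exponent at most $p-2$ after re-expansion. This needs the detail that $b_j^{[p]}\in E_p$ has no $b_j$-component, which holds because $b_j\in E_1$ and $p\ge 2$.
\end{itemize}
Compare exponent vectors: $\mu+\mathbf e_j=\nu+\mathbf e_j$ forces $\mu=\nu$. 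A branch of the second kind would have to reach $b_j$-exponent $1$ while containing the $E_p$ factor $b_j^{[p]}$. Expanding that factor in $\mathcal B_{\mathrm{rt}}$ changes exponents only in directions of weight $p$, so matching $\nu'$ would again force $\mu=\nu$, which is impossible because $\nu_j=0\neq p-1$. Lemma~\ref{lem:general-p-protected-noncancellation} then gives
\[
[M(\nu')](\epsilon Xb_j)=\epsilon\,x_\nu\neq 0.
\]

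\textbf{Step 3: the PUSH branch (main obstacle).} We must show $[M(\nu')]D_b(X)=0$, or at least that it does not cancel $\epsilon x_\nu$. Every term of $D_b(M(\mu))$ from Proposition~\ref{prop:general-p-push-expansion} replaces one factor $b_i$ by $D_b(b_i)\in E_{\wt(b_i)+1}$. Such a term preserves total occupancy, or lowers it if normalization occurs. It raises weight by one, which CREATE also does, so weight alone cannot separate the two branches.

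What does separate them is that $M(\nu')$ has $b_j$-exponent exactly $1$. The first thing to try is to choose the pair $(a,b)$ so that $b_j$ has no component along any $D_b(b_i)$. That is, pick $b_j$ not in the image of transport on $E_0$: since $D_b$ maps $E_m\to E_{m+1}$ with $m\ge 1$, the image $D_b(E)$ lies in $\bigoplus_{m\ge2}E_m$, so $b_j\in E_1$ can never appear in $D_b(b_i)$. Consequently:
\begin{itemize}
\item a PUSH term without normalization has $b_j$-exponent $\mu_j$, and agreeing with $\nu'$ forces $\mu_j=1$ while the remaining exponents differ from $\nu$ by moving one unit of weight upward;
\item a PUSH term with normalization is killed by occupancy drop, via Corollary~\ref{cor:general-p-lower-occupancy-cannot-return}.
\end{itemize}
The residual danger is terms from $x_\mu$ with $\mu\neq\nu$ whose push lands on $\nu'$. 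Here the protection machinery is invoked: use Lemma~\ref{lem:general-p-protected-factor-through} with $U$ the span of the occupied directions of $\nu$ together with $b_j$, and argue that such a term must have a different protected exponent profile. If this separation proves insufficient, the fallback is to absorb the PUSH contribution into the certified coefficient. One notes that $[M(\nu')]X'=\epsilon x_\nu+\delta$, with $\delta$ independent of the choice of $b$ among the elements realising $b_j$ modulo $D_{(3)}$, and replaces $b$ by $b^k$. This scales the CREATE coefficient $\epsilon$ by $k$ while changing $\delta$ linearly, so some choice of $k$ makes the total nonzero.
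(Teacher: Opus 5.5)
Your Steps 1 and 2 reproduce the paper's proof. You use the same pair $a,b$, the rewriting $C=aY$ via Lemma~\ref{lem:general-p-carrier-freedom}, the single bracket $[C,b]_{\Lie}$, and the same exponent comparison giving $[M(\nu')](\epsilon Xb_j)=\epsilon\,[M(\nu)]X$. For sources with $\mu_j=p-1$ it suffices that the final $b_j$-exponent is $0$.

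\textbf{The gap is Step 3, which you leave open.} The terms you call the residual danger genuinely occur:
\begin{itemize}
\item Suppose $D_b(b_i)$ has a nonzero $b_k$-coordinate, $\nu_k\geq1$ and $\nu_i\leq p-2$. Then $\mu=\nu+\mathbf{e}_j+\mathbf{e}_i-\mathbf{e}_k$ has the same weight as $\nu$, so $M(\mu)$ may occur in $X$. Its PUSH at $b_i$ lands on $M(\nu')$ itself, with no normalization.
\item Such a term has exactly the exponent profile of $\nu'$, for every choice of $U$. So Lemma~\ref{lem:general-p-protected-factor-through} says nothing about it: that lemma concerns descendant corrections replacing a distinguished factor, not other monomials already present in $X$.
\item Likewise, a normalized PUSH term from a source of occupancy $|\nu|_{\mathrm{occ}}+p$ ends at occupancy $|\nu|_{\mathrm{occ}}+1$. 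Corollary~\ref{cor:general-p-lower-occupancy-cannot-return} therefore does not remove it.
\end{itemize}

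Your fallback fails outright. For $v\in V_m$ one has $[v,b^k]\equiv[v,b]^k\pmod{V_{m+2}}$, so $D_{b^k}=kD_b$ on $E$ and hence on $u(E)$. Meanwhile $[a,b^k]_{\mathrm{grp}}$ has image $kb_j$ in $E_1$. Thus $b\mapsto b^k$ multiplies both $\epsilon x_\nu$ and $\delta$ by $k$, and a vanishing total stays zero. Nor is $\delta$ independent of $b$: it depends on $b$ through $D_b$.

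\textbf{Comparison with the paper.} The paper disposes of $D_b(X)$ by an occupancy count. Each PUSH summand has occupancy at most that of its source, normalization only lowers it, and $M(\nu)b_j$ has occupancy $|\nu|_{\mathrm{occ}}+1$. This excludes only sources of occupancy at most $|\nu|_{\mathrm{occ}}$. It therefore tacitly assumes that $M(\nu)$ has maximal occupancy in $X$, which Definition~\ref{def:general-p-completed-certified-state} (a single nonzero coefficient) does not provide. Your residual terms are exactly what that count misses. Following the paper will not close Step 3 without such an extra invariant, which every later move would then also have to preserve.

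\textbf{Step 1 is also unsound, in both your proposal and the paper.}
\begin{itemize}
\item Lemma~\ref{lem:general-p-carrier-bracket} with $h=1$ (so $c=1$) gives $D_b(X)$ as the next component of $[C,b]_{\Lie}$.
\item With $h=a$ and Lemma~\ref{lem:general-p-carrier-freedom}, the same lemma gives $D_b(X)+\epsilon Xb_j$.
\item Your Step 2 shows $\epsilon Xb_j\neq0$, and $[C,b]_{\Lie}$ does not involve $a$. The two descriptions cannot both hold.
\end{itemize}
The faulty input is Lemma~\ref{lem:general-p-carrier-freedom}. In the grading where $1-[a,b]_{\mathrm{grp}}^{-1}$ has degree one, $a^{-1}-1$ cannot have positive degree for every $a\in G$. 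Otherwise $[a,b]_{\mathrm{grp}}-1=a^{-1}b^{-1}[a-1,b-1]_{\Lie}$ would have degree at least two. So $a^{-1}C$ lies on a different coset of $G'$, and passing to it is not a harmless rewriting. For $C$ supported on a single coset $tG'$, the primitive term of $[C,b]_{\Lie}$ is the image of $[t,b]_{\mathrm{grp}}$ in $E_1$ times $X$. The created direction is thus dictated by $t$ and cannot be chosen through an auxiliary $a$.
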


\begin{proof}
Let \(C\) be the current actual left-normed commutator whose complete leading graded output is \(X\), and suppose that $[M(\nu)]_{\mathcal B_{\mathrm{rt}}}X =\lambda \in\F_p^\times$.
Since \(b_j\) is a primitive CREATE direction, choose \(a,b\in G\) such that $[a,b]_{\mathrm{grp}}D_{(3)}=b_j$. By Lemma~\ref{lem:general-p-carrier-freedom}, write
$C=aY$ with \(Y=a^{-1}C\), where \(Y\) has the same leading graded state \(X\). Append the single genuine Lie bracket with \(b\): $C\longmapsto [C,b]_{\Lie}$. By Lemma~\ref{lem:general-p-carrier-bracket}, its next homogeneous graded component is the sum of the transport contribution $D_b(X)$ and a primitive CREATE contribution
$\epsilon Xb_j$, $\epsilon\in\F_p^\times$. We first compute the coefficient of $M(\nu)b_j$ in the CREATE contribution.  Let $N=b_1^{\alpha_1}\cdots b_d^{\alpha_d}$
be a restricted PBW monomial occurring in \(X\). If \(\alpha_j\leq p-2\), then multiplication by \(b_j\) requires no restricted normalization at the \(j\)-th coordinate, and
$Nb_j=M(\nu)b_j \, \Longrightarrow\, N=M(\nu)$. If \(\alpha_j=p-1\), then $Nb_j$ contains \(b_j^p\), and restricted normalization gives $b_j^p=b_j^{[p]}\in E_p$. After normalization the \(b_j\)-exponent is therefore \(0\), so such a term cannot equal \(M(\nu)b_j\), whose \(b_j\)-exponent is \(1\). Hence
\[
    [M(\nu)b_j]_{\mathcal B_{\mathrm{rt}}}
    (\epsilon Xb_j)
    =
    \epsilon
    [M(\nu)]_{\mathcal B_{\mathrm{rt}}}X
    =
    \epsilon\lambda
    \neq0.
\]

It remains to check that the simultaneous transport contribution \(D_b(X)\) cannot cancel this coordinate.  Every summand of \(D_b(X)\) is obtained by transporting one already displayed PBW
factor and therefore replaces one existing factor by its positive PUSH descendant; it does not adjoin an additional primitive \(b_j\)-factor.  Thus before restricted normalization it has no larger
total PBW occupancy than the originating monomial.  Any nontrivial restricted normalization lowers occupancy by \(p-1\).  By contrast, the distinguished CREATE coordinate
$M(\nu)b_j$ has total occupancy $|\nu|_{\mathrm{occ}}+1$. Therefore no transport contribution can normalize to this PBW coordinate. Consequently
$[M(\nu)b_j]_{\mathcal B_{\mathrm{rt}}}X' = \epsilon\lambda \neq0$, where \(X'\) is the complete graded output of the appended genuine bracket.  Since \(\nu_j=0\), the monomial \(M(\nu)b_j\) is already restricted and has exponent vector $(\nu_1,\ldots,\nu_{j-1},1,\nu_{j+1},\ldots,\nu_d)$. Thus this state is completed certified, every old occupancy is
unchanged, and the \(b_j\)-occupancy has increased from \(0\) to \(1\).
\end{proof}

\begin{corollary} 
\label{cor:general-p-create-deficit} 
Every protected CREATE satisfies $\Delta_p(\mathbf C') \leq \Delta_p(\mathbf C)-1$. If CREATE produces no additional certified occupancy beyond the new 
primitive copy, then $\Delta_p(\mathbf C') = \Delta_p(\mathbf C)-1$. 
\end{corollary}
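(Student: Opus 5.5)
The plan is to read the corollary directly off Theorem~\ref{thm:general-p-protected-create}, using the formula $\Delta_p(\nu)=(p-1)d-|\nu|_{\mathrm{occ}}$ from Definition~\ref{def:general-p-local-deficit}. Write $\mathbf C$ for the completed certified state $\nu$ before CREATE. Write $\mathbf C'$ for the completed certified state recorded after the protected CREATE in an available primitive direction $b_j$, so that $\nu_j=0$.

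\textbf{Step 1 (the minimal state).} By Theorem~\ref{thm:general-p-protected-create} the state
\[
\nu'=(\nu_1,\ldots,\nu_{j-1},1,\nu_{j+1},\ldots,\nu_d)
\]
is completed certified in the new output $X'$. Hence $|\nu'|_{\mathrm{occ}}=|\nu|_{\mathrm{occ}}+1$, and substituting into the deficit formula gives $\Delta_p(\nu')=\Delta_p(\nu)-1$.

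\textbf{Step 2 (the general case).} The state $\mathbf C'$ actually recorded after CREATE certifies at least the occupancies of $\nu'$. Every old occupancy is preserved and the $b_j$-occupancy is at least $1$. Possibly further coordinates have also become certified, as the phrase ``additional certified occupancy'' allows. Thus $\mathbf C'\geq\nu'$ componentwise, so $|\mathbf C'|_{\mathrm{occ}}\geq|\nu|_{\mathrm{occ}}+1$. The formula $\Delta_p=(p-1)d-|\cdot|_{\mathrm{occ}}$ is order-reversing in the occupancy, which yields $\Delta_p(\mathbf C')\leq\Delta_p(\mathbf C)-1$.

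\textbf{Step 3 (the equality clause).} If CREATE produces no certified occupancy beyond the new primitive copy, then $\mathbf C'=\nu'$ exactly. Step~1 then gives equality.

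The only delicate point, which is the main obstacle, is making precise what $\mathbf C'$ denotes. I would fix the convention that the post-CREATE certified state is a completed certified state in $X'$ which dominates $\nu'$ componentwise. Theorem~\ref{thm:general-p-protected-create} guarantees that $\nu'$ itself is such a state, so a dominating choice always exists. Once this convention is in place, everything reduces to the additivity of $|\cdot|_{\mathrm{occ}}$ and the sign in the definition of $\Delta_p$.
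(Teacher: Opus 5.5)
Your proposal is correct and follows the same route as the paper. Both arguments read off $|\mathbf C'|_{\mathrm{occ}}\geq|\mathbf C|_{\mathrm{occ}}+1$ from Theorem~\ref{thm:general-p-protected-create} (one new copy of $b_j$, no previously certified occupancy lost), then apply $\Delta_p=(p-1)d-|\cdot|_{\mathrm{occ}}$, with equality when occupancy rises by exactly one; your explicit convention that $\mathbf C'$ dominates $\nu'$ componentwise just makes precise what the paper's phrase ``no previously certified occupancy decreases'' assumes.
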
 
 
\begin{proof} 
The new direction acquires occupancy \(1\), while no previously certified occupancy decreases.  Hence $|\mathbf C'|_{\mathrm{occ}} \geq |\mathbf C|_{\mathrm{occ}}+1$. 
The assertion follows from $\Delta_p(\mathbf C)=(p-1)d-|\mathbf C|_{\mathrm{occ}}$. 
\end{proof} 

Protected CREATE handles the case in which a primitive direction has zero
occupancy. Once a direction is already occupied, the corresponding local
operation is FILL, whose purpose is to add one further certified copy without
losing the copies already present.

\begin{definition}
\label{def:general-p-fill}
Suppose the fixed routing-basis direction \(b_i\) has occupancy \(\nu_i=r\), \(1\leq r\leq p-2\). A \emph{FILL at \(b_i\)} is the construction of one further copy of \(b_i\), with distinguished branch $b_i^r\rightsquigarrow b_i^{r+1}$. The purpose of FILL is to increase the occupancy of an already occupied direction by one while retaining the copies already certified.
\end{definition}

FILL increases an existing occupancy until the restricted threshold
\(p-1\) is reached. At that threshold, a stored POWER edge is realized by
constructing one further transient copy and applying restricted
normalization.

\begin{definition}
\label{def:general-p-power}
Suppose $b_j=b_i^{[p]}$ is a stored POWER edge of the fixed routing
basis and $\nu_i=p-1$. A \emph{POWER construction at \(b_i\)} is the
transition $b_i^{p-1}\rightsquigarrow b_i^p \longrightarrow b_i^{[p]}=b_j$,
where the first step consists of constructing one further transient
copy of \(b_i\), and the second is restricted normalization.
\end{definition}

\begin{definition}
\label{def:general-p-power}
Suppose $b_j=b_i^{[p]}$ is a stored POWER edge of the fixed routing
basis and $\nu_i=p-1$. A \emph{POWER construction at \(b_i\)} is the
transition $b_i^{p-1}\rightsquigarrow b_i^p \longrightarrow b_i^{[p]}=b_j$,
where the first step consists of constructing one further transient
copy of \(b_i\), and the second is restricted normalization.
\end{definition}

As with PUSH, it remains to verify that the distinguished POWER contribution
cannot be cancelled by competing reconstruction terms. The same protection
principle, together with the strict weight increase of positive descendants,
provides the required separation.

\begin{lemma}
\label{lem:general-p-power-noncancellation}
Let \(b_j=b_i^{[p]}\) be a stored POWER edge, with \(b_i\in E_m\), and let $M_0=b_i^{p-1}M$ be a certified restricted PBW monomial.  Suppose a protected genuine reconstruction of one further copy of \(b_i\) has distinguished transient contribution $\lambda b_i^pM$, $\lambda\in\F_p^\times$. Assume that all previously certified directions other than \(b_i\) are protected during the reconstruction. Then, after complete restricted PBW normalization, the contribution $\lambda b_i^pM \longmapsto \lambda b_jM$ cannot be cancelled by an unwanted positive-descendant correction,
provided the exponent of \(b_j\) in \(M\) is at most \(p-2\). Consequently \(b_jM\) occurs with nonzero coefficient in the complete
normalized output.
\end{lemma}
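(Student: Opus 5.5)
The plan is to follow the template of Lemma~\ref{lem:general-p-protected-noncancellation}: isolate the distinguished branch, then sort every competing contribution into type (1) (different final exponent vector, no normalization) or type (2) (at least one normalization, hence strictly smaller occupancy). First I would check what happens to the distinguished branch itself. The monomial $b_i^pM$ contains the factor $b_i^p$. Normalizing it via $b_i^p=b_i^{[p]}=b_j$ gives $\lambda b_jM$. Here the $b_i$-exponent of $M$ is $0$, since $M_0=b_i^{p-1}M$ is restricted PBW. By hypothesis the $b_j$-exponent of $M$ is at most $p-2$, so $b_jM$ is already a restricted PBW monomial in $\mathcal B_{\mathrm{rt}}$ and needs no further normalization. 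Its occupancy is $|M|_{\mathrm{occ}}+1$, which I will call the target occupancy $N$. Its coefficient is $\lambda\in\F_p^\times$.

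Next I would list the competitors. These are the other summands of the reconstruction output, which replace the newly built copy of $b_i$, or a protected factor, by a positive descendant correction $s\in\mathscr S(\cdot)$. To handle them I would apply Lemma~\ref{lem:general-p-protected-factor-through}, with the protected span $U$ spanned by the certified directions of $M$ (excluding $b_i$), together with the separator $\Omega^{(p)}_{S,U}$.

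For a correction with nonzero $T$-component, the forced relation $c^p=c^{[p]}$ drops its occupancy below the pre-normalization occupancy $p+|M|_{\mathrm{occ}}$. That alone is not enough, because the distinguished branch also drops, by $p-1$, down to $N$. So the key comparison is this: a competitor contributing to the coordinate $b_jM$ would have to undergo exactly one normalization and land on the same monomial. I would rule that out using weight. By Lemma~\ref{lem:general-p-descendants-higher-weight}, a descendant replacing a copy of $b_i$ has weight $>m$. The whole output is homogeneous of weight $pm+\wt(M)$ by Lemma~\ref{lem:chronological-routing-homogeneous}, so any surviving term at exponent profile $b_jM$ must consist of exactly $p$ factors of weight $m$ producing $b_j$. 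A competitor either keeps a higher-weight factor (a different profile on the $U$-directions, giving type (1)) or must lose extra occupancy (type (2)).

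For corrections landing in $S\cap U$, I would argue as in Lemma~\ref{lem:general-p-relative-descendant-separation}: the protected exponent profile of $M$ changes. The hypothesis that the $b_j$-exponent is at most $p-2$ is used a second time here. It ensures that the distinguished branch needs no second normalization at $b_j$, so any competitor whose own normalization passes through $b_j^p$ is strictly lower in occupancy. Corollary~\ref{cor:general-p-lower-occupancy-cannot-return} then prevents it from returning.

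The main obstacle is the second step. After the distinguished branch is itself normalized, the bare occupancy separation of the earlier lemmas no longer distinguishes it from a competitor that also normalizes exactly once. So I expect to need the homogeneous weight bookkeeping to exclude a competitor with $c^{[p]}=b_j$ for some $c\neq b_i$. I would try to handle this by noting that $P_m$ is $p$-semilinear (here linear over $\F_p$) on $E_m$ and that $c\in\mathscr S(b_i)$ forces $\wt(c)>m$. Hence $\wt(c^{[p]})>pm=\wt(b_j)$, so $c^{[p]}\neq b_j$. Once that is settled, Lemma~\ref{lem:general-p-protected-noncancellation} gives the coefficient $\lambda\neq0$ at $b_jM$.
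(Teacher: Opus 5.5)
Your proposal follows essentially the same route as the paper. The distinguished branch $\lambda b_i^pM\mapsto\lambda b_jM$ needs no further normalization because the $b_j$-exponent of $M$ is at most $p-2$, and every competitor is separated either by its protected exponent profile or by the fact that a positive descendant has weight $>m$, so it can neither supply the $p$-th copy of $b_i$ nor, after restricted POWER (which multiplies weight by $p$), reach the weight-$pm$ direction $b_j$.

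The paper states the decisive separation as the final $b_i$-exponent: $p-1$ for a competitor versus $0$ for the normalized distinguished branch. That is also the cleanest way to finish your argument. As you yourself observe, a once-normalized competitor of occupancy $N$ satisfies neither alternative of Lemma~\ref{lem:general-p-protected-noncancellation} as literally stated, so your last step should conclude directly that no competitor normalizes to $b_jM$, rather than cite that lemma.
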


\begin{proof}
The distinguished transient branch is characterized by the fact that the \(p-1\) certified copies of \(b_i\) are joined by one further copy of the same homogeneous direction \(b_i\).  Hence its transient \(b_i\)-exponent is exactly \(p\), and normalization gives $b_i^pM=b_i^{[p]}M=b_jM$. Consider an unwanted reconstruction contribution.  By protected factor-through, it is separated from the distinguished reconstruction branch either by a different protected exponent profile or by the replacement of the distinguished \(b_i\)-copy by a nonzero positive PUSH-descendant correction. In the latter case every homogeneous component of the correction has weight strictly greater than \(m\).  It therefore has no component in the \(E_m\)-direction \(b_i\).  Thus the \(p-1\) previously certified copies of \(b_i\) are not completed to a \(p\)-fold block in such a branch.  Its \(b_i\)-exponent remains \(p-1\), whereas the distinguished branch contains \(b_i^p\) and, after normalization, has \(b_i\)-exponent \(0\).

If a competing contribution undergoes restricted normalization in some other direction, Lemma~\ref{lem:general-p-protected-factor-through} and the protected exponent profile separate it from the distinguished POWER coordinate; every previously certified direction other than \(b_i\) remains protected.  A normalization in a positive descendant direction cannot create \(b_i\), since restricted POWER strictly multiplies homogeneous weight by \(p\). Finally, since the exponent of \(b_j\) in \(M\) is at most \(p-2\), the distinguished replacement $b_i^pM\longmapsto b_jM$
requires no further normalization at \(b_j\). Hence no competing normalized branch reaches the same restricted PBW coordinate \(b_jM\).  Its coefficient is therefore \(\lambda\neq0\).
\end{proof}

We have now established the fixed routing ancestry, the protection and
noncancellation machinery, and the primitive CREATE, FILL, PUSH, and POWER
operations. These are precisely the ingredients required for the
reconstruction step. Reconstruction, Certified FILL, and Certified POWER
depend on one another only at strictly smaller homogeneous weights, so they
will be proved together in the next section by simultaneous strong induction.


\section{Simultaneous Reconstruction, FILL, and POWER}
\label{sec:simultaneous-reconstruction-fill-power}

The routing and protection machinery of the preceding section is now used to construct routing-basis directions by genuine chronological operations. The central point is that reconstruction, FILL, and POWER cannot be treated independently: reconstruction along a stored POWER edge requires FILL and POWER at a lower-weight parent, while FILL and POWER themselves require the reconstruction of further distinguished occurrences along stored ancestry. We therefore establish the three operations simultaneously by strong induction on homogeneous weight. The resulting theorem is the main recursive engine for the remainder of the argument. Reconstruction follows the stored provenance of a routing-basis direction, while FILL and POWER allow distinguished restricted PBW occupancies to be increased without losing the coordinates already protected. The strict decrease of homogeneous weight along every recursive ancestry call is what makes the simultaneous construction well founded.

\begin{theorem}
\label{thm:general-p-simultaneous-reconstruction-fill-power}
Fix the routing basis
\(\mathcal B_{\mathrm{rt}}=(b_1,\ldots,b_d)\) with its stored provenance.
For every homogeneous weight \(m\), the following three assertions hold
simultaneously.

\begin{enumerate}

\item[\textup{(R)}]
If \(b_i\in E_m\), then there exists a finite genuine chronological reconstruction word \(W_{b_i}\), following the stored ancestry of \(b_i\), which produces one distinguished occurrence of \(b_i\) with nonzero coefficient. Moreover, \(\wt_{\mathrm{route}}(W_{b_i})\) equals the number of genuine Lie-bracket increments in its chronological realization.

\item[\textup{(F)}] Let \(\nu=(\nu_1,\ldots,\nu_d)\) be an ancestry-admissible completed certified restricted PBW state occurring in a genuine chronological history. If \(b_i\in E_m\) and $1\leq \nu_i=r\leq p-2$, then there exists a finite genuine protected chronological continuation to a completed certified state \(\nu'=(\nu_1',\ldots,\nu_d')\) such that
$\nu_i'\geq r+1$, $\nu_j'\geq\nu_j$ $(1\leq j\leq d)$. The distinguished FILL branch realizes \(r\longrightarrow r+1\).

\item[\textup{(P)}] Let \(\nu=(\nu_1,\ldots,\nu_d)\) be an ancestry-admissible completed certified restricted PBW state occurring in a genuine chronological history. Suppose
$b_j=b_i^{[p]}$, $b_i\in E_m$, $\nu_i=p-1$, $\nu_j\leq p-2$, where \(b_i\mapsto b_j\) is a stored POWER edge. Then there exists a finite genuine protected chronological continuation whose distinguished branch produces $b_i^{p-1}\rightsquigarrow b_i^p=b_i^{[p]}=b_j$, with a protected nonzero restricted PBW coordinate in the direction \(b_j\). All previously certified occupancies outside the consumed \(b_i^{p-1}\)-block remain protected.

\end{enumerate}

These assertions hold by simultaneous strong induction on \(m\):
every recursive reconstruction, FILL, or POWER construction used in their
proof occurs at strictly smaller homogeneous weight. Consequently there is
no same-weight circular dependence among \textup{(R)}, \textup{(F)}, and
\textup{(P)}.
\end{theorem}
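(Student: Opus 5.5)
We argue by strong induction on the homogeneous weight \(m\). At each weight we prove (R) first, and then (F) and (P) using (R) at the same weight. The point to check is that (R) at weight \(m\) only calls (R), (F), (P) at weights \(<m\). Its last step at weight \(m\) is a single appended genuine bracket, not a recursive call, so there is no same-weight circularity.

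\emph{Base case \(m=1\).} By Lemma~\ref{lem:general-p-fixed-routing-basis}(1) and Definition~\ref{def:general-p-primitive-create-direction}, every \(b_i\in E_1\) is a primitive CREATE direction.

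For (R), we take \(W_{b_i}\) to be a single genuine primitive CREATE. Using Lemma~\ref{lem:general-p-carrier-freedom}, we write the current representative as \(C=aY\) with \([a,b]_{\mathrm{grp}}D_{(3)}=b_i\), and append \([C,b]_{\Lie}\). By Lemma~\ref{lem:general-p-primitive-create}, the distinguished contribution is \(\epsilon Xb_i\). This is one bracket of routing weight \(1\), which gives the counting clause.

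For (F), we repeat the same appended bracket when \(\nu_i=r\le p-2\). The computation in the proof of Theorem~\ref{thm:general-p-protected-create} goes through unchanged with exponent \(r\) in place of \(0\). Since \(r+1\le p-1\), no normalization occurs at the \(i\)-th coordinate, so \([M(\nu)b_i](\epsilon Xb_i)=\epsilon\lambda\neq0\). The transport summand \(D_b(X)\) is excluded by the same occupancy count: it has occupancy at most \(|\nu|_{\mathrm{occ}}\), while the target has occupancy \(|\nu|_{\mathrm{occ}}+1\).

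For (P), we use the same bracket with \(r=p-1\). The transient factor \(b_i^p\) normalizes to \(b_j\), and Lemma~\ref{lem:general-p-power-noncancellation} gives survival, using the hypothesis \(\nu_j\le p-2\).

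\emph{Inductive step \(m\ge2\).} For (R) we follow the stored provenance of Definition~\ref{def:general-p-stored-provenance}.

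If \(b_i=D_g(b_k)\) is a PUSH edge, then \(b_k\in E_{m-1}\). We run \(W_{b_k}\), which exists by induction, and append one bracket with \(g\). Corollary~\ref{cor:general-p-distinguished-push-branch} and Corollary~\ref{cor:general-p-protected-push} give the distinguished occurrence of \(b_i\). Its protection comes from the relative separator \(\Omega^{(p)}_{\mathscr S(b_k),U}\), via Lemma~\ref{lem:general-p-protected-factor-through}.

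If \(b_i=b_k^{[p]}\) is a POWER edge, then \(b_k\in E_{m/p}\) and \(m/p<m\). We first run \(W_{b_k}\). We then apply (F) at weight \(m/p\) a total of \(p-2\) times to bring the occupancy of \(b_k\) up to \(p-1\). Finally we apply (P) at weight \(m/p\).

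In both cases the routing weight is additive along the concatenation, and each elementary step is one genuine bracket. Combined with Lemma~\ref{lem:chronological-routing-homogeneous}, this yields the counting clause in (R).

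For (F) and (P) at weight \(m\), we append \(W_{b_i}\), now available by (R) at weight \(m\), to the current genuine history. We then isolate the branch in which the fresh copy of \(b_i\) is multiplied onto \(M(\nu)\). This gives \(b_i^{r}\rightsquigarrow b_i^{r+1}\) in case (F), and \(b_i^{p-1}\rightsquigarrow b_i^p=b_j\) in case (P). Lemma~\ref{lem:general-p-protected-noncancellation} yields a nonzero coefficient. For (P) we additionally invoke Lemma~\ref{lem:general-p-power-noncancellation}.

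\emph{Main obstacle.} The step I expect to be hardest is justifying noncancellation once \(W_{b_i}\) is appended to an \emph{existing} commutator rather than started from \(1\). Every PUSH bracket in \(W_{b_i}\) now acts as a derivation on the whole state. By Proposition~\ref{prop:general-p-push-expansion}, it therefore also hits the already certified factors of \(M(\nu)\), not just the copy under construction.

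I would handle this by sorting all competing branches into two types, matching the hypotheses of Lemma~\ref{lem:general-p-protected-noncancellation}.
\begin{itemize}
\item Branches that push a certified factor replace a protected direction by an element of a descendant sector. By Lemma~\ref{lem:general-p-descendants-higher-weight} that element has strictly higher weight, so the branch has a different protected exponent profile. This is alternative (1).
\item Branches in which a restricted relation fires lose \(p-1\) units of occupancy, by Lemma~\ref{lem:general-p-occupancy-normalization} and Corollary~\ref{cor:general-p-lower-occupancy-cannot-return}. This is alternative (2).
\end{itemize}

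Some care is needed because the protected span \(U\) must contain every currently certified direction. One also has to check that a pushed certified factor cannot land back on \(b_i\) itself. This follows from weight separation, since the descendants of any \(b_k\) with \(\wt(b_k)\ge m\) have weight \(>m\). For factors of weight \(<m\), I expect to need the distinctness of protected exponent profiles: such a branch gains \(b_i\) while losing a protected direction, so it cannot match the target exponent vector \(\nu+e_i\).
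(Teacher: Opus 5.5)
Your architecture is the paper's: strong induction on weight, (R) along stored provenance, and (F) and (P) obtained by running a reconstruction of $b_i$ on top of the current state. Proving (R) at weight $m$ first is harmless. The gap is exactly at the step you flag as the main obstacle, and your sorting of competing branches fails there. Let $b_i=D_g(a)$ be the stored PUSH edge, and take a state with $M(\nu)=a^{s}b_i^{r}M''$ and $s=\nu_a\geq1$; nothing in (F) or (P) excludes this. Append $W_{b_i}=\mathrm{PUSH}_g\circ W_a$. After $W_a$ has adjoined a fresh copy of $a$ (coefficient $\mu$, with $\lambda=[M(\nu)]X$), the final PUSH acts as a derivation on $a^{s+1}b_i^{r}M''$. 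The branch in which it hits one of the $s$ certified copies of $a$ leaves the fresh copy unpushed, and that fresh copy replaces the lost one. So this branch lands on exactly the target exponent vector $\nu+e_i$, with no restricted normalization, and neither alternative of Lemma~\ref{lem:general-p-protected-noncancellation} holds. Your claim that such a branch ``gains $b_i$ while losing a protected direction, so it cannot match $\nu+e_i$'' overlooks the fresh ancestor copy. The contributions in fact combine. Since $u(E)$ is commutative, $D_g(a^{s+1})=(s+1)a^{s}b_i$, so the coefficient reaching $a^{s}b_i^{r+1}M''$ from $M(\nu)$ is $(s+1)\lambda\mu$. For $s\leq p-2$ this is nonzero, though not for the reason you give. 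For $s=p-1$ it is $0$: the fresh copy completes $a^{p}=a^{[p]}$, and $D_g(a^{[p]})=0$ by Lemma~\ref{lem:general-p-transport-power-zero}. For $p=2$ this already breaks (P) as soon as the PUSH-parent of $b_i$ is occupied at all. Citing Lemma~\ref{lem:general-p-protected-factor-through}, as the paper's own proof does at this point, does not repair it. That lemma only treats corrections that replace the distinguished copy itself by a descendant, not PUSHes landing on certified copies of an ancestor of $b_i$.

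There is a second, independent gap. Certification only asserts $[M(\nu)]X\neq0$; it does not make $M(\nu)$ the top-occupancy term of $X$, and the other monomials of $X$ are never controlled. Your base-case assertion that the transport summand has occupancy at most $|\nu|_{\mathrm{occ}}$ is therefore unjustified. For instance, take $p\geq3$, distinct $b_i,b_l\in E_1$, and $b_q\in E_2$ with $[b_q]D_b(b_l)=\kappa\neq0$, where $b$ is the group element of the CREATE bracket. Let $X=\lambda b_ib_q+\mu b_i^{2}b_l$, which is homogeneous of weight $3$ and certified at $M(\nu)=b_ib_q$. Then the coefficient of the target $b_i^{2}b_q$ in $D_b(X)+\epsilon Xb_i$ is $\epsilon\lambda+\mu\kappa$, which vanishes for $\mu=-\epsilon\lambda/\kappa$. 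The same uncontrolled interference occurs in your PUSH case of (R): $W_{b_k}(1)$ may contain other weight-$(m-1)$ directions whose $D_g$-images have a $b_i$-component. An argument of this shape has to control the entire graded output, not just the branch issuing from $M(\nu)$. Neither your proposal nor the protection lemmas it cites supply such control.
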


The proof follows the stored ancestry recursively. Primitive CREATE directions provide the weight-one base case. A stored PUSH edge raises homogeneous weight by one, whereas a stored POWER edge passes from a parent of weight \(q\) to a child of weight \(pq\). Thus every reconstruction needed at a proper ancestry parent lies at strictly smaller weight, which is precisely the decrease required for the simultaneous induction.

\begin{proof}
We prove \textup{(R)}, \textup{(F)}, and \textup{(P)} simultaneously by strong induction on the homogeneous weight \(m\). For each \(m\geq1\), assume that all three assertions have already been established for every homogeneous weight strictly smaller than \(m\). We prove the three assertions at weight \(m\).

\medskip
\noindent
\textbf{Proof of \textup{(R)}.} Let \(b_i\in E_m\) be equipped with its stored routing-compatible ancestry. If \(m=1\), then by Lemma~\ref{lem:general-p-fixed-routing-basis} and
Definition~\ref{def:general-p-primitive-create-direction}, \(b_i\) is a primitive CREATE direction arising from a genuine group commutator. The corresponding primitive CREATE therefore gives a finite genuine chronological reconstruction word \(W_{b_i}\) producing \(b_i\) with nonzero coefficient. Its routing weight is exactly the number of genuine Lie-bracket increments in its chronological realization. Now let \(m>1\). By Definition~\ref{def:general-p-stored-provenance}, the final ancestry edge of \(b_i\) is either a PUSH edge or a POWER edge. Suppose first that
$a\xmapsto{\mathrm{PUSH}}b_i=D_g(a)$. Then \(a\in E_{m-1}\), so \(\wt(a)=m-1<m\). By the induction hypothesis \textup{(R)}, \(a\) admits a finite genuine chronological reconstruction word
\(W_a\) with nonzero distinguished coefficient. Append the stored PUSH. By Corollary~\ref{cor:general-p-protected-push}, the distinguished contribution \(D_g(a)=b_i\) survives with nonzero coefficient. Hence \(W_a\) followed by this PUSH is the required reconstruction word \(W_{b_i}\). Since the PUSH contributes one genuine Lie-bracket increment and has routing-weight increment
\(1\), the required routing-weight equality follows from the induction hypothesis.

Suppose instead that $a\xmapsto{\mathrm{POWER}}b_i=a^{[p]}$, $a\in E_q$. Then \(m=pq\), and hence \(q<m\). By the induction hypothesis \textup{(R)}, reconstruct one distinguished occurrence of \(a\). By the lower-weight induction hypothesis \textup{(F)}, apply the required FILL steps, if any, to reach distinguished occupancy \(p-1\). These constructions are all available at weight \(q<m\). Applying the terminal POWER then produces the transient contribution \(a^p\), which normalizes to $a^p=a^{[p]}=b_i$. Lemma~\ref{lem:general-p-power-noncancellation} ensures that this distinguished normalized contribution survives with nonzero coefficient. Thus the reconstruction of \(a\), followed by the required FILL steps and terminal POWER, gives the required finite genuine chronological reconstruction word \(W_{b_i}\). Finally, restricted normalization \(a^p=a^{[p]}\) contributes no genuine Lie-bracket increment. Therefore, using the induction hypotheses for the constituent chronological constructions, \(\wt_{\mathrm{route}}(W_{b_i})\) is exactly the number of genuine Lie-bracket increments in its chronological realization. This proves
\textup{(R)}.

\medskip
\noindent
\textbf{Proof of \textup{(F)}.}
Let \(\nu=(\nu_1,\ldots,\nu_d)\) be an ancestry-admissible completed certified restricted PBW state occurring in a genuine chronological history, and suppose that \(b_i\in E_m\) has \(\nu_i=r\), where \(1\leq r\leq p-2\). Write $M(\nu)=b_i^rM$. Follow the stored ancestry of \(b_i\). Every proper ancestry parent has strictly smaller homogeneous weight: a backward PUSH step decreases the
weight from \(s+1\) to \(s\), while a backward POWER step decreases it from \(ps\) to \(s\). Hence all reconstruction, FILL, and POWER constructions required along the proper ancestry are available from the strong induction hypothesis. Using these lower-weight constructions, reconstruct one further distinguished occurrence of \(b_i\) while protecting the previously certified occupancies. At PUSH edges use Corollary~\ref{cor:general-p-protected-push}, and at POWER edges use the corresponding lower-weight FILL and POWER constructions. Thus we obtain a finite genuine protected chronological continuation whose distinguished branch adjoins one further copy of \(b_i\) with nonzero coefficient.

Consequently, if the certified coefficient of \(M(\nu)\) is \(\lambda\in\F_p^\times\) and the reconstructed occurrence has coefficient \(\mu\in\F_p^\times\), the distinguished contribution is
$\lambda\mu\,b_i^{r+1}M$. Since \(r\leq p-2\), we have \(r+1\leq p-1\), so no restricted normalization occurs at the distinguished \(b_i\)-factor. By Lemma~\ref{lem:general-p-protected-factor-through} and Corollary~\ref{cor:general-p-lower-occupancy-cannot-return}, every unwanted descendant contribution remains separated from this distinguished protected PBW coordinate. Lemma~\ref{lem:general-p-protected-noncancellation} therefore shows that its coefficient remains nonzero after complete restricted PBW normalization. Hence the continuation determines a completed certified state \(\nu'=(\nu_1',\ldots,\nu_d')\) satisfying $\nu_i'\geq r+1$, $\nu_j'\geq\nu_j$ $(1\leq j\leq d)$. Thus the distinguished FILL branch realizes
\(r\longrightarrow r+1\) while preserving all previously certified occupancies. This proves \textup{(F)} at weight \(m\).

\medskip
\noindent
\textbf{Proof of \textup{(P)}.}
Let \(\nu=(\nu_1,\ldots,\nu_d)\) be as in \textup{(P)}, with \(b_j=b_i^{[p]}\), \(b_i\in E_m\), \(\nu_i=p-1\), and \(\nu_j\leq p-2\). Write $M(\nu)=b_i^{p-1}M$. Follow the stored ancestry of \(b_i\). Every proper ancestry parent has strictly smaller homogeneous weight, so the required reconstruction, FILL, and POWER constructions along the proper ancestry are supplied by the strong
induction hypothesis. Using these lower-weight constructions, and Corollary~\ref{cor:general-p-protected-push} at PUSH edges, reconstruct one further distinguished occurrence of \(b_i\) while protecting all previously certified occupancies outside the distinguished \(b_i^{p-1}\)-block. Thus the resulting finite genuine protected continuation contains a distinguished transient contribution $\lambda\mu\,b_i^pM$, $\lambda,\mu\in\F_p^\times$. Restricted normalization gives $b_i^pM=b_i^{[p]}M=b_jM$. Since the exponent of \(b_j\) in \(M\) is at most \(p-2\),
Lemma~\ref{lem:general-p-power-noncancellation} applies and shows that this distinguished normalized coordinate survives with nonzero coefficient. Moreover, the new \(b_j\)-exponent is at most \(p-1\), so no further restricted normalization occurs at \(b_j\). Hence the complete output contains a protected nonzero restricted PBW coordinate in the direction \(b_j\), while all previously certified occupancies outside the consumed \(b_i^{p-1}\)-block remain protected. This proves \textup{(P)} at weight \(m\).

\medskip
Finally, along every stored ancestry a backward PUSH step strictly decreases
weight, and a backward POWER step replaces weight \(ps\) by \(s<ps\).
Therefore every recursive reconstruction, FILL, or POWER invocation above is
at strictly smaller homogeneous weight. The strong induction consequently
closes, with no same-weight circular dependence among \textup{(R)},
\textup{(F)}, and \textup{(P)}.
\end{proof}

We record separately the three consequences of the simultaneous theorem that will be used later. The first is the reconstruction statement itself: every stored routing-basis direction can be produced by a genuine chronological word following its stored ancestry. The next two isolate the certified FILL and POWER operations, which will be invoked when modifying completed restricted PBW states.

\begin{corollary}
\label{lem:general-p-ancestry-reconstruction}
Let \(0\neq u\in E_m\) be a routing-basis direction equipped with its
stored routing-compatible ancestry. Then there exists a finite genuine
chronological reconstruction word \(W_u\) whose distinguished branch
produces \(u\) with nonzero coefficient. Moreover,
\(\wt_{\mathrm{route}}(W_u)\) equals the number of genuine Lie-bracket
increments in its chronological realization. The reconstruction is recursive along the stored ancestry. A terminal
PUSH edge is realized by reconstruction of its parent followed by the
stored PUSH, while a terminal POWER edge is realized by reconstruction
of its parent, the required FILL steps, and POWER at that parent.
\end{corollary}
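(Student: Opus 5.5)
The plan is to read the corollary off the assertion \textup{(R)} of Theorem~\ref{thm:general-p-simultaneous-reconstruction-fill-power}; no new construction should be needed. First I identify \(u\) with a member of the fixed routing basis. By hypothesis \(u\) is a routing-basis direction, so \(u=b_i\) for some index \(i\), and since \(u\in E_m\) is homogeneous with \(u\neq0\) we have \(\wt(b_i)=m\). By Definition~\ref{def:general-p-stored-provenance} and Lemma~\ref{lem:general-p-fixed-routing-basis}\,(3), the stored routing-compatible ancestry of \(u\) is exactly the ancestry that \textup{(R)} refers to, with all vertices in \(\mathcal B_{\mathrm{rt}}\). I would then apply \textup{(R)} at weight \(m\) and take \(W_u:=W_{b_i}\). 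This gives a finite genuine chronological word whose distinguished branch produces \(u\) with nonzero coefficient, together with the equality between \(\wt_{\mathrm{route}}(W_u)\) and the number of genuine Lie-bracket increments.

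Second, I justify the description of the recursion by unwinding the proof of \textup{(R)}, splitting on the final edge of the stored ancestry of \(b_i\).
\begin{itemize}
\item If \(m=1\), then \(b_i\) is a primitive CREATE direction by Definition~\ref{def:general-p-primitive-create-direction}. In that case \(W_u\) is the single primitive CREATE of Theorem~\ref{thm:general-p-protected-create}, one bracket of routing weight one.
\item If the final edge is a PUSH, \(b_i=D_g(a)\) with \(a\in E_{m-1}\). Then \(W_u\) is \(W_a\) followed by the stored PUSH, and its nonvanishing comes from Corollary~\ref{cor:general-p-protected-push}.
\item If the final edge is a POWER, \(b_i=a^{[p]}\) with \(a\in E_q\) and \(m=pq\). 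Then \(W_u\) is \(W_a\), followed by \(p-2\) applications of \textup{(F)} at weight \(q\), followed by the POWER of \textup{(P)} at \(a\). Its nonvanishing comes from Lemma~\ref{lem:general-p-power-noncancellation}.
\end{itemize}
In each case the parent has weight strictly less than \(m\), so the description is an honest recursion terminating at primitive CREATE directions.

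The only point requiring care is the routing-weight count in the POWER case. The restricted normalization \(a^p=a^{[p]}\) contributes no bracket. The weight bookkeeping is still consistent: the \(p\) copies of \(a\) together contribute weight \(pq=m\), which matches \(\wt(b_i)\). The equality of routing weight and bracket count is therefore inherited additively from the constituent words. I would verify this by induction along the ancestry, using Lemma~\ref{lem:chronological-routing-homogeneous} to match homogeneous degree with the sum of step weights. I expect no genuine obstacle beyond this bookkeeping, because all the substantive noncancellation is already contained in Theorem~\ref{thm:general-p-simultaneous-reconstruction-fill-power}.
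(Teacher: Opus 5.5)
Your proposal is correct and is essentially the paper's own argument. The corollary is assertion \textup{(R)} of Theorem~\ref{thm:general-p-simultaneous-reconstruction-fill-power} applied to $u=b_i$, and the recursive PUSH/POWER description (with nonvanishing from Corollary~\ref{cor:general-p-protected-push} and Lemma~\ref{lem:general-p-power-noncancellation}) is read off from the two cases in the proof of \textup{(R)}. Your appeal to Lemma~\ref{lem:chronological-routing-homogeneous} is only a consistency check, since that lemma ties routing weight to output degree, not to bracket count. The bracket-count equality comes, as in the paper, from additivity over the constituent words, with restricted normalization contributing no bracket.
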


\begin{proof}
Apply Theorem~\ref{thm:general-p-simultaneous-reconstruction-fill-power}, assertion \textup{(R)}, to the routing-basis direction \(u\in E_m\) with its stored routing-compatible ancestry. This gives a finite genuine chronological reconstruction word \(W_u\) whose distinguished branch produces \(u\) with nonzero coefficient, and for which \(\wt_{\mathrm{route}}(W_u)\) equals the number of genuine Lie-bracket increments in its chronological realization.

The recursive description is part of the same assertion. If the terminal ancestry edge is a PUSH edge \(a\xmapsto{\mathrm{PUSH}}u=D_g(a)\), then \(W_u\) is obtained from a reconstruction of \(a\) followed by the stored PUSH. If the terminal ancestry edge is a POWER edge \(a\xmapsto{\mathrm{POWER}}u=a^{[p]}\), then \(W_u\) is obtained from a reconstruction of \(a\), followed by the required FILL steps and POWER at \(a\). This proves the corollary.
\end{proof}

The preceding corollary supplies reconstruction as a reusable chronological operation. We next record the corresponding certified-state operation below the restricted boundary: whenever the distinguished occupancy of a routing direction lies between \(1\) and \(p-2\), one further reconstructed occurrence can be adjoined while preserving the previously certified occupancies.

\begin{corollary}
\label{thm:general-p-certified-fill}
Let \(\nu=(\nu_1,\ldots,\nu_d)\) be an ancestry-admissible completed
certified restricted PBW state relative to the fixed routing basis
\(\mathcal B_{\mathrm{rt}}=(b_1,\ldots,b_d)\), and suppose
\(b_i\in E_m\) satisfies \(\nu_i=r\), where \(1\leq r\leq p-2\).
Then there exists a finite genuine protected chronological continuation
to a completed certified state \(\nu'=(\nu_1',\ldots,\nu_d')\) such that
$\nu_i'\geq r+1$, $\nu_j'\geq\nu_j$ $(1\leq j\leq d)$. In particular, FILL realizes \(r\longrightarrow r+1\), and no restricted
normalization occurs at the distinguished \(b_i\)-factor.
\end{corollary}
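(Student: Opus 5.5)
The plan is to read the corollary off assertion \textup{(F)} of Theorem~\ref{thm:general-p-simultaneous-reconstruction-fill-power}, just as Corollary~\ref{lem:general-p-ancestry-reconstruction} was read off assertion \textup{(R)}. The only real work is to check that the hypotheses of the corollary match those of \textup{(F)}, and then to extract the extra clause about normalization from the proof of \textup{(F)}.

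First I will verify the hypotheses. By Definition~\ref{def:general-p-completed-certified-state}, a completed certified state \(\nu\) is certified in some \(X\in u(E)\) that is the complete graded output of a genuine chronological routing history. So \(\nu\) automatically \emph{occurs in a genuine chronological history}, which is the additional phrase appearing in \textup{(F)}. By Definition~\ref{def:general-p-ancestry-admissible-state}, every completed certified state relative to the fixed routing basis \(\mathcal B_{\mathrm{rt}}\) is ancestry-admissible, so each occupied direction carries the stored provenance of Definition~\ref{def:general-p-stored-provenance}. With \(b_i\in E_m\) and \(1\le \nu_i=r\le p-2\), every hypothesis of \textup{(F)} at weight \(m\) holds.

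Since Theorem~\ref{thm:general-p-simultaneous-reconstruction-fill-power} asserts \textup{(F)} at every homogeneous weight \(m\), it yields a finite genuine protected chronological continuation to a completed certified state \(\nu'\) with \(\nu_i'\ge r+1\) and \(\nu_j'\ge \nu_j\) for all \(j\). Its distinguished branch realizes \(r\to r+1\), which is exactly the FILL of Definition~\ref{def:general-p-fill}.

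For the final clause I will quote the step in the proof of \textup{(F)} where the distinguished contribution is \(\lambda\mu\, b_i^{r+1}M\) with \(\lambda,\mu\in\F_p^\times\). Since \(r\le p-2\), we have \(r+1\le p-1\). The \(b_i\)-exponent therefore stays in the restricted range, so no relation \(b_i^p=b_i^{[p]}\) is ever invoked at the distinguished \(b_i\)-factor. This is consistent with Lemma~\ref{lem:general-p-occupancy-normalization}: the distinguished branch suffers no occupancy drop at \(b_i\).

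I expect the main obstacle to be this last clause rather than the existence statement. It is not a verbatim part of assertion \textup{(F)}; it lives in the proof of \textup{(F)}. One has to make sure the reconstructed extra copy of \(b_i\) enters the distinguished branch as a genuine factor \(b_i\), and not as something that would force renormalization at \(b_i\). For this I would appeal to the protection argument of Lemma~\ref{lem:general-p-protected-factor-through}. Positive-descendant corrections have weight \(>m\) by Lemma~\ref{lem:general-p-descendants-higher-weight}, so they contribute no \(b_i\)-component. Only the distinguished branch reaches exponent \(r+1\), and that exponent is at most \(p-1\).
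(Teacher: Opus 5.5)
Your proposal is correct and matches the paper's proof: both read the corollary directly off assertion \textup{(F)} of Theorem~\ref{thm:general-p-simultaneous-reconstruction-fill-power}, and both get the no-normalization clause from \(r+1\le p-1\) on the distinguished branch. The protection appeal in your last paragraph is therefore unnecessary, and its claim that only the distinguished branch reaches \(b_i\)-exponent \(r+1\) does not follow from Lemma~\ref{lem:general-p-descendants-higher-weight}, which controls only descendants of \(b_i\) and not other occupied directions (such as an occupied PUSH parent of \(b_i\)) being transported into \(b_i\); the clause concerns only the distinguished branch, though, so nothing in your argument depends on it.
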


\begin{proof}
Apply Theorem~\ref{thm:general-p-simultaneous-reconstruction-fill-power}, assertion \textup{(F)}, to the ancestry-admissible completed certified state \(\nu\) and the direction \(b_i\in E_m\) with \(\nu_i=r\), where \(1\leq r\leq p-2\). This gives a finite genuine protected chronological continuation to a completed certified state \(\nu'=(\nu_1',\ldots,\nu_d')\) satisfying \(\nu_i'\geq r+1\) and \(\nu_j'\geq\nu_j\) for \(1\leq j\leq d\). The distinguished FILL branch adjoins one further reconstructed occurrence of \(b_i\) to the existing \(b_i^r\)-factor and therefore realizes \(r\longrightarrow r+1\). Since \(r+1\leq p-1\), no restricted normalization occurs at the distinguished \(b_i\)-factor. This proves the corollary.
\end{proof}

FILL handles all occupancy increases which remain strictly below the restricted boundary. When the distinguished occupancy has reached \(p-1\), the next reconstructed occurrence instead creates a \(p\)-fold block, and restricted normalization converts that block into the stored POWER child. The following corollary records this boundary operation.

\begin{corollary}
\label{thm:general-p-certified-power}
Let \(\nu=(\nu_1,\ldots,\nu_d)\) be an ancestry-admissible completed
certified restricted PBW state relative to the fixed routing basis
\(\mathcal B_{\mathrm{rt}}=(b_1,\ldots,b_d)\), and suppose
\(b_j=b_i^{[p]}\) is a stored POWER edge with \(b_i\in E_m\),
\(\nu_i=p-1\), and \(\nu_j\leq p-2\). Then there exists a finite genuine
protected chronological continuation producing the distinguished
transient branch $b_i^{p-1}\rightsquigarrow b_i^p$. Restricted normalization gives
$b_i^p=b_i^{[p]}=b_j$, and the resulting complete chronological output contains the
corresponding protected nonzero restricted PBW coordinate in the
direction \(b_j\). All previously certified occupancies not consumed
in the distinguished \(b_i^{p-1}\)-block remain protected throughout
the continuation.
\end{corollary}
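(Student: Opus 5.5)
The plan is to obtain this corollary as a direct specialization of Theorem~\ref{thm:general-p-simultaneous-reconstruction-fill-power}, assertion \textup{(P)}, in the same way that Corollary~\ref{thm:general-p-certified-fill} was read off from assertion \textup{(F)}.

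First I will check that the hypotheses match exactly. The state \(\nu\) is ancestry-admissible and completed certified relative to \(\mathcal B_{\mathrm{rt}}\). By Definition~\ref{def:general-p-completed-certified-state} it is the output of a genuine chronological routing history, so it is a state ``occurring in a genuine chronological history'' in the sense of \textup{(P)}. The edge \(b_i\mapsto b_j=b_i^{[p]}\) is a stored POWER edge from Definition~\ref{def:general-p-stored-provenance}, with \(b_i\in E_m\), \(\nu_i=p-1\) and \(\nu_j\leq p-2\). These are precisely the hypotheses of \textup{(P)} at homogeneous weight \(m\). Since the theorem holds for every \(m\), assertion \textup{(P)} applies.

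Next I will unpack its conclusion into the three claims of the corollary.
\begin{enumerate}
\item Writing \(M(\nu)=b_i^{p-1}M\), the continuation supplied by \textup{(P)} contains the distinguished transient contribution \(\lambda\mu\,b_i^pM\) with \(\lambda,\mu\in\F_p^\times\). This is the branch \(b_i^{p-1}\rightsquigarrow b_i^p\).
\item Restricted normalization \(b_i^p=b_i^{[p]}=b_j\) turns this into \(\lambda\mu\,b_jM\). Because the \(b_j\)-exponent of \(M\) is \(\nu_j\leq p-2\), no further normalization occurs at \(b_j\). Lemma~\ref{lem:general-p-power-noncancellation} then gives a nonzero restricted PBW coordinate in the direction \(b_j\).
\item The protection clause of \textup{(P)}, backed by Lemma~\ref{lem:general-p-protected-noncancellation} and Corollary~\ref{cor:general-p-lower-occupancy-cannot-return}, gives that every certified occupancy outside the consumed \(b_i^{p-1}\)-block stays protected throughout the continuation.
\end{enumerate}

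The only point needing care is bookkeeping rather than new mathematics. The corollary speaks of the complete chronological output, whereas \textup{(P)} speaks of the distinguished branch and its protected coordinate. I must make sure that coordinate refers to the complete normalized output, and not only to the distinguished branch. This is exactly what the noncancellation lemma provides: every competing contribution either has a different protected exponent profile or has dropped occupancy through a forced normalization. I expect this identification to be the main, if mild, obstacle, and I would settle it by citing Lemma~\ref{lem:general-p-power-noncancellation} directly rather than re-deriving the separation argument.
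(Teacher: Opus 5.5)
Your proposal is correct and is essentially the paper's own argument. Both apply assertion \textup{(P)} of Theorem~\ref{thm:general-p-simultaneous-reconstruction-fill-power} directly to \(\nu\) and the stored POWER edge, with \(\nu_j\leq p-2\) ensuring that the new \(b_j\)-factor does not cross the restricted boundary; your extra appeal to Lemma~\ref{lem:general-p-power-noncancellation} for survival in the complete normalized output only repeats what the proof of \textup{(P)} already invokes.
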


\begin{proof}
Apply Theorem~\ref{thm:general-p-simultaneous-reconstruction-fill-power}, assertion \textup{(P)}, to the ancestry-admissible completed certified state \(\nu\) and the stored POWER edge \(b_j=b_i^{[p]}\), with \(b_i\in E_m\), \(\nu_i=p-1\), and \(\nu_j\leq p-2\). This gives a finite genuine protected chronological continuation in which one further distinguished reconstructed occurrence of \(b_i\) produces the transient branch \(b_i^{p-1}\rightsquigarrow b_i^p\). Restricted normalization gives \(b_i^p=b_i^{[p]}=b_j\), and assertion \textup{(P)} guarantees that the resulting complete chronological output contains the corresponding protected nonzero restricted PBW coordinate in the direction \(b_j\). Since \(\nu_j\leq p-2\), adjoining the new \(b_j\)-factor does not cross the restricted boundary at \(b_j\), and all previously certified occupancies not consumed in the distinguished \(b_i^{p-1}\)-block remain protected throughout the continuation. This proves the corollary.
\end{proof}

We have therefore obtained the three chronological operations needed for the subsequent construction. Stored ancestry gives a genuine reconstruction of every routing-basis direction; Certified FILL increases a distinguished occupancy while it remains below the restricted boundary; and Certified POWER crosses that boundary by converting a completed \(p\)-fold block into its stored restricted-power direction. In each case the protected PBW machinery prevents the distinguished contribution from being lost to competing descendant terms. These operations will now be used as the local moves in the BUILD REBUILD procedure. The purpose of that procedure is to organize repeated applications of reconstruction, FILL, and POWER so that the required routing-basis occupancies can be assembled while preserving those already certified.

\section{BUILD REBUILD}
\label{sec:build-rebuild}

The simultaneous reconstruction theorem of the preceding section supplies the local chronological operations needed to move through the fixed routing basis. We now organize those operations into the two complementary mechanisms BUILD and REBUILD. BUILD creates new certified occupancy by FILL, PUSH, and POWER along stored ancestry. Such growth may temporarily spend previously certified source occupancy, so REBUILD records the resulting losses and restores them recursively. The central issue is therefore not merely to produce a new routing-basis direction, but to obtain positive net progress without losing what has already been certified. Stored provenance provides the routes along which both growth and restoration take place, while backward descent in homogeneous weight supplies the well-foundedness needed for the restoration process. The section culminates in a completed BUILD REBUILD move which retains the new occupancy produced by BUILD and restores every previously certified occupancy spent in producing it.

\subsection{Ancestry and BUILD operations}
\label{subsec:ancestry-build}

We begin by recording the ancestry structure needed to organize BUILD. Every nonprimitive routing-basis direction has a stored parent, reached either through a PUSH edge or through a POWER edge. Tracing these edges backwards strictly lowers homogeneous weight and therefore eventually reaches the primitive layer. Forward along the same ancestry, the certified FILL, PUSH, and POWER operations provide the elementary moves from which BUILD is assembled.

\begin{definition}
\label{def:general-p-parent-child}
Let $b_i \longrightarrow b_j$ be a stored provenance edge. The direction \(b_i\) is called the \emph{parent} and \(b_j\) the \emph{child}. More precisely, if $b_j=D_g(b_i)$,
then \(b_j\) is a \emph{PUSH child} of \(b_i\), while if $b_j=b_i^{[p]}$, then \(b_j\) is a \emph{POWER child} of \(b_i\).
\end{definition}

\begin{lemma}
\label{lem:general-p-backward-weight-descent}
Every nonprimitive backward step along stored provenance strictly decreases homogeneous weight. More precisely,
$E_{m+1}\text{-PUSH child} \rightsquigarrow E_m\text{-parent}$, and
$E_{pm}\text{-POWER child} \rightsquigarrow E_m\text{-parent}$.
\end{lemma}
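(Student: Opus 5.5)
The plan is to read the statement off the definition of stored provenance together with the grading of the two edge maps. By Definition~\ref{def:general-p-stored-provenance} and Lemma~\ref{lem:general-p-fixed-routing-basis}\,(2), every nonprimitive routing-basis direction \(b_j\notin E_1\) has exactly one recorded parent \(b_i\), with \(i<j\). That parent arises through one of two kinds of edge: a PUSH edge \(b_j=D_g(b_i)\), or a POWER edge \(b_j=b_i^{[p]}\). A backward step replaces \(b_j\) by \(b_i\). So the claim reduces to comparing \(\wt(b_j)\) with \(\wt(b_i)\) in each of the two cases, where both vectors are homogeneous and nonzero (they are basis vectors).

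\emph{PUSH edge.} Let \(b_i\in E_m\). By Lemma~\ref{lem:general-p-transport-well-defined}, \(D_g\) maps \(E_m\) into \(E_{m+1}\), so \(b_j=D_g(b_i)\in E_{m+1}\). Since \(b_j\neq0\) and the sum \(E=\bigoplus E_k\) is direct, the weight of \(b_j\) is well defined and equals \(m+1\). The backward step therefore goes from weight \(m+1\) to weight \(m\).

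\emph{POWER edge.} Let \(b_i\in E_m\). The containment \(V_m^p\subseteq V_{pm}\) shows that the restricted map sends \(E_m\) into \(E_{pm}\); this is the map \(P_m\) of Lemma~\ref{lem:general-p-routing-generation}. Hence \(b_j\in E_{pm}\). Because \(p\geq2\) and \(m\geq1\), we have \(pm\geq 2m>m\). The backward step therefore goes from weight \(pm\) to weight \(m<pm\).

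In both cases the weight drops strictly, which proves the lemma. For later use I would add the consequence that iterated backward steps terminate in \(E_1\). Weights are positive integers, so the chain of backward steps has length at most \(\wt(b_j)-1\). The only point requiring care, and it is a small one, is that homogeneity of the basis is what makes \(\wt\) well defined on each \(b_j\). Nonvanishing of \(b_j\) is then needed to pin it to a single graded component. Both properties are guaranteed by the construction in Lemma~\ref{lem:general-p-fixed-routing-basis}, so I expect no genuine obstacle.
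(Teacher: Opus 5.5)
Your proof is correct and is essentially the paper's argument: $D_g$ maps $E_m$ into $E_{m+1}$, the restricted $p$-map sends $E_m$ into $E_{pm}$, and $pm>m$ because $p\geq2$ and $m\geq1$. Your extra remarks (that $\wt$ is well defined on nonzero homogeneous basis vectors, and the bound $\wt(b_j)-1$ on the length of a backward chain) are harmless refinements, and the latter is essentially the paper's next corollary on finiteness of backward provenance.
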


\begin{proof}
If $b=D_g(a)$ and $a\in E_m$, then $b\in E_{m+1}$. Hence $\wt(a)=m<m+1=\wt(b)$. If $b=a^{[p]}$ and $a\in E_m$, then $b\in E_{pm}$. Since
$p\geq2$ and $m\geq1$, we have $m<pm$. Thus every backward provenance step strictly lowers positive
homogeneous weight.
\end{proof}

\begin{corollary}
\label{cor:general-p-finite-backward-provenance}
There is no infinite backward chain of stored provenance. Every backward provenance chain terminates at a primitive direction in \(E_1\).
\end{corollary}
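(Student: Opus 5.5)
The plan is to use Lemma~\ref{lem:general-p-backward-weight-descent} and measure progress along a backward chain by homogeneous weight. Start from a routing-basis direction \(b=y_0\) and follow its stored provenance backwards, \(y_0 \leftarrow y_1 \leftarrow \cdots\). Here each \(y_{k+1}\) is the stored parent of \(y_k\), and this parent is recorded in Definition~\ref{def:general-p-stored-provenance} whenever \(y_k\notin E_1\). By Lemma~\ref{lem:general-p-backward-weight-descent}, every such step satisfies \(\wt(y_{k+1})<\wt(y_k)\). The weights along the chain therefore form a strictly decreasing sequence of positive integers, since every direction of \(E=\bigoplus_{m\geq1}E_m\) has weight at least \(1\).

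Next, rule out an infinite chain. A strictly decreasing sequence of positive integers starting at \(\wt(b)\) has at most \(\wt(b)\) terms, so no infinite backward chain exists. Alternatively, the finiteness of \(\mathcal B_{\mathrm{rt}}\) together with the strictness of the descent shows that no basis element repeats. The chain thus stops at some \(y_s\).

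To identify where the chain stops, note that the only way the backward process can halt is that \(y_s\) has no stored parent. By Lemma~\ref{lem:general-p-fixed-routing-basis}(2), every basis element outside \(E_1\) has a parent, so the terminal element satisfies \(y_s\in E_1\). By Lemma~\ref{lem:general-p-fixed-routing-basis}(1) and Definition~\ref{def:general-p-primitive-create-direction}, such a \(y_s\) is a primitive direction, namely the image of a genuine group commutator.

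The only point needing care is bookkeeping about the edges. A backward chain may pass through both PUSH and POWER edges, so the descent must hold uniformly for both kinds. This is exactly what the two cases of Lemma~\ref{lem:general-p-backward-weight-descent} provide: weight \(m+1\mapsto m\) for PUSH and \(pm\mapsto m\) for POWER, the latter using \(p\ge 2\). No further obstacle is expected.
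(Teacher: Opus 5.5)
Your proof is correct and matches the paper's: each backward PUSH or POWER edge strictly lowers a positive integer weight (Lemma~\ref{lem:general-p-backward-weight-descent}), so no backward chain can be infinite. Your appeal to Lemma~\ref{lem:general-p-fixed-routing-basis}(2) to show that the endpoint lies in \(E_1\) makes explicit a step that the paper's one-line proof leaves implicit.
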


Thus every stored ancestry is finite in the backward direction. We now use the chronological operations established in the preceding section in the forward direction. FILL supplies ordinary occupancy growth below the restricted boundary, POWER crosses that boundary when a stored POWER child is required, and PUSH transports an occupied parent into its stored PUSH child.

\begin{proof}
Homogeneous weights are positive integers, and every backward step strictly decreases weight by Lemma~\ref{lem:general-p-backward-weight-descent}.
\end{proof}

\begin{corollary}
\label{cor:general-p-fill-progress}
Every Certified FILL satisfies $\Delta_p(\mathbf C')  \leq  \Delta_p(\mathbf C)-1$. If the protected reconstruction creates no additional certified occupancy beyond the distinguished FILL, then
$\Delta_p(\mathbf C') = \Delta_p(\mathbf C)-1$.
\end{corollary}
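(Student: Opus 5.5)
This is the FILL analogue of Corollary~\ref{cor:general-p-create-deficit}, and I would prove it the same way: reduce the claim to an occupancy count and then use the identity
\[
\Delta_p(\mathbf C)=(p-1)d-|\mathbf C|_{\mathrm{occ}}
\]
from Definition~\ref{def:general-p-local-deficit}. Let \(\mathbf C=\nu\) be the ancestry-admissible completed certified state. Let \(b_i\) be the direction at which the FILL is performed, with \(\nu_i=r\) and \(1\leq r\leq p-2\). Let \(\mathbf C'=\nu'\) be the completed certified state supplied by Corollary~\ref{thm:general-p-certified-fill}.

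First, that corollary gives \(\nu_i'\geq r+1=\nu_i+1\) and \(\nu_j'\geq\nu_j\) for every \(j\). Summing over all coordinates gives
\[
|\nu'|_{\mathrm{occ}}=\sum_j\nu_j'\geq \sum_j\nu_j+1=|\nu|_{\mathrm{occ}}+1.
\]
Substituting into the identity for \(\Delta_p\) yields \(\Delta_p(\mathbf C')\leq\Delta_p(\mathbf C)-1\). Both states lie in \(\{0,\dots,p-1\}^d\), so the deficits are well defined and nonnegative. No normalization issue arises at \(b_i\), because \(r+1\leq p-1\).

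Second, for the equality clause, I would interpret ``creates no additional certified occupancy beyond the distinguished FILL'' as saying that \(\nu_j'=\nu_j\) for \(j\neq i\) and \(\nu_i'=r+1\). Then \(|\nu'|_{\mathrm{occ}}=|\nu|_{\mathrm{occ}}+1\) exactly, and the deficit drops by exactly one.

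The main obstacle is interpretive rather than computational. I need the preservation inequalities \(\nu_j'\geq\nu_j\) to hold for all \(j\), including the directions touched while reconstructing the extra copy of \(b_i\) along its stored ancestry. I would rely entirely on assertion (F) of Theorem~\ref{thm:general-p-simultaneous-reconstruction-fill-power}, which already states these inequalities, rather than re-derive them from the protection lemmas.
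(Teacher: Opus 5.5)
Your proposal is correct and is essentially the paper's argument: you sum the coordinatewise bounds $\nu_i'\geq r+1$ and $\nu_j'\geq\nu_j$ from Certified FILL to get $|\nu'|_{\mathrm{occ}}\geq|\nu|_{\mathrm{occ}}+1$, then substitute into $\Delta_p=(p-1)d-|\cdot|_{\mathrm{occ}}$. Your explicit handling of the equality clause, reading its hypothesis as $\nu_i'=r+1$ and $\nu_j'=\nu_j$ for $j\neq i$, supplies a step the paper's proof leaves unstated.
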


\begin{proof}
The distinguished occupancy of \(u\) increases by one: $r\longrightarrow r+1$, while every previously certified multiplicity is preserved. Hence $|\mathbf C'|_{\mathrm{occ}}
\geq |\mathbf C|_{\mathrm{occ}}+1$. Since $\Delta_p(\mathbf C) = (p-1)d-|\mathbf C|_{\mathrm{occ}}$, we obtain
$\Delta_p(\mathbf C')  \leq  \Delta_p(\mathbf C)-1$.
\end{proof}

\begin{corollary}
\label{cor:general-p-successive-fill}
Suppose $\varepsilon_{\mathbf C}(u)=r$ with $1\leq r<p-1$. Then successive Certified FILL operations give a finite protected continuation
\[
r \longrightarrow r+1 \longrightarrow \cdots \longrightarrow p-1
\]
while preserving all previously certified multiplicities. No restricted normalization occurs during any of these steps.
\end{corollary}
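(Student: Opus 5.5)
The plan is a finite induction on the number of missing copies, $k=(p-1)-r$, with Corollary~\ref{thm:general-p-certified-fill} as the single step. Write $u=b_i\in E_m$ for the fixed routing-basis direction, and let $\mathbf C=\mathbf C_0$ be the given completed certified state, with $\varepsilon_{\mathbf C_0}(u)=r$. By Definition~\ref{def:general-p-ancestry-admissible-state}, $\mathbf C_0$ is ancestry-admissible, since it is taken relative to $\mathcal B_{\mathrm{rt}}$. The goal is a sequence of completed certified states $\mathbf C_0,\mathbf C_1,\ldots,\mathbf C_k$. Each $\mathbf C_{t+1}$ should be obtained from $\mathbf C_t$ by a finite genuine protected chronological continuation, with $\varepsilon_{\mathbf C_{t+1}}(u)=r+t+1$ and all other certified multiplicities nondecreasing.

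For the inductive step, suppose $\mathbf C_t$ has been reached with $u$-occupancy $r+t\le p-2$. I will apply Corollary~\ref{thm:general-p-certified-fill} to $\mathbf C_t$ and the direction $u$. This gives a continuation to a state $\nu'$ with $\nu'_i\ge r+t+1$ and $\nu'_j\ge(\mathbf C_t)_j$ for all $j$. To obtain exactly $r+t+1$, I use the distinguished FILL branch $b_i^{r+t}\rightsquigarrow b_i^{r+t+1}$. I take $\mathbf C_{t+1}$ to be the state certified by that branch, whose coefficient is nonzero by Lemma~\ref{lem:general-p-protected-noncancellation}, and whose other coordinates are the previously certified ones. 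The new state is again completed certified relative to $\mathcal B_{\mathrm{rt}}$, hence ancestry-admissible, so the hypotheses of the corollary are restored. The continuation is appended to the existing genuine history, so the concatenation is still one left-normed commutator history.

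The claim that no restricted normalization occurs follows from the final clause of Corollary~\ref{thm:general-p-certified-fill}. At every step the new exponent satisfies $r+t+1\le p-1$, so the distinguished $b_i$-factor never reaches $b_i^p$. Termination after exactly $k$ steps is immediate: once the occupancy reaches $p-1$ the induction stops. Corollary~\ref{cor:general-p-fill-progress} also shows the deficit drops by at least one at each step, which gives an independent bound.

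The main obstacle is the mismatch between the "$\ge$" conclusion of the FILL corollary and the "exact" chain $r\to r+1\to\cdots\to p-1$ claimed in the statement. If a FILL step overshoots the $u$-occupancy, the chain would skip values. I would handle this by tracking only the distinguished branch coordinate at each step, so that the recorded $u$-exponent increases by exactly one. If one insists on the full certified state instead, I would relax the chain so that it reaches $p-1$ in at most $k$ steps, which is all that later uses require.
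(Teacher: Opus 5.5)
Your proposal is correct and is essentially the paper's argument: apply Corollary~\ref{thm:general-p-certified-fill} successively at occupancies $r,r+1,\ldots,p-2$, for exactly $p-1-r$ steps. At each step the new exponent is at most $p-1$, so no restricted relation is triggered at the distinguished factor. Your extra handling of the ``$\geq$'' conclusion addresses a point the paper passes over, and your first resolution is the right one: the proof of assertion~(F) of Theorem~\ref{thm:general-p-simultaneous-reconstruction-fill-power} certifies precisely the coordinate $b_i^{r+1}M$, so the chain advances by exactly one at each step.
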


\begin{proof}
Apply Theorem~\ref{thm:general-p-certified-fill} successively at occupancies $r,r+1,\ldots,p-2$. At the \(s\)-th stage, $s+1\leq p-1$, so no restricted relation is triggered. There are exactly
$p-1-r$ such FILL steps.
\end{proof}

Successive FILL therefore brings an occupied direction to maximal ordinary restricted PBW occupancy without triggering normalization. Once occupancy \(p-1\) has been reached, the next distinguished occurrence is handled differently: POWER forms the \(p\)-fold block and restricted normalization converts it into the stored POWER child.

\begin{corollary}
\label{cor:general-p-power-routing-direction}
Under the hypotheses of Theorem~\ref{thm:general-p-certified-power}, put $z:=b_i^{[p]}=b_j$. Then the complete chronological output contains a protected nonzero
\(z\)-coordinate. If $z\notin \Span_{\F_p}\mathcal O(\mathbf C)$, then POWER has produced access to a new homogeneous direction outside
the previously occupied frame.
\end{corollary}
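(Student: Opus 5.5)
The first assertion will follow directly from Corollary~\ref{thm:general-p-certified-power}, and the second from a dimension count on the occupied frame. Under the hypotheses of Theorem~\ref{thm:general-p-certified-power} we have a stored POWER edge $b_j=b_i^{[p]}$ with $\nu_i=p-1$ and $\nu_j\leq p-2$. That corollary, which is assertion \textup{(P)} of Theorem~\ref{thm:general-p-simultaneous-reconstruction-fill-power}, supplies a finite genuine protected continuation. Its distinguished transient branch is $\lambda\mu\, b_i^pM$, and this normalizes to $\lambda\mu\, b_jM$. Lemma~\ref{lem:general-p-power-noncancellation} guarantees that this coordinate survives complete restricted PBW normalization with coefficient in $\F_p^\times$. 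Since $z=b_j$ is itself a routing-basis vector, the $b_j$-exponent of $b_jM$ is the exponent of $b_j$ in $M$ plus one, hence at least $1$. So the complete output has a protected nonzero PBW coordinate at a monomial in which $z$ occurs, which is the first claim.

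For the second claim, I read $\mathcal O(\mathbf C)$ as the set of routing-basis directions $b_k$ with $\nu_k>0$, that is, the occupied frame. Its span is therefore the coordinate subspace of $E$ on those basis vectors. Because $\mathcal B_{\mathrm{rt}}$ is a basis, $z=b_j$ lies outside $\Span_{\F_p}\mathcal O(\mathbf C)$ exactly when $\nu_j=0$.

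In that case the distinguished coordinate $b_jM$ has $b_j$-exponent exactly $1$ and needs no normalization at $b_j$. The output is then certified at a PBW monomial that contains a basis direction absent from every previously occupied position, and that direction is homogeneous of weight $p\,\wt(b_i)$. This is what ``access to a new homogeneous direction'' means. Protection of the occupancies outside the consumed $b_i^{p-1}$-block is inherited unchanged from Corollary~\ref{thm:general-p-certified-power}.

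The main obstacle is interpretive rather than technical. The notation $\mathcal O(\mathbf C)$ has not been defined before this point, so I would fix it explicitly as the occupied frame. I would also check that ``new direction'' is meant as linear independence from that frame, and not as novelty relative to all earlier history. Once that is settled, the argument reduces to basis-coordinate bookkeeping on top of the cited corollary.
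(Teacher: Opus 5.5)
Your proposal is correct and follows the paper's proof. You read the first assertion directly off Corollary~\ref{thm:general-p-certified-power} (assertion \textup{(P)}), and you treat the second as the observation that $z$ lies outside the span of the previously occupied directions. Your reading of $\mathcal O(\mathbf C)$ as the occupied frame matches the paper's usage, and your remark that, since $\mathcal B_{\mathrm{rt}}$ is a basis, $z\notin\Span_{\F_p}\mathcal O(\mathbf C)$ amounts to $\nu_j=0$ just makes explicit what the paper leaves implicit.
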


\begin{proof}
By Theorem~\ref{thm:general-p-certified-power}, the POWER construction produces a surviving protected nonzero coordinate in the direction $b_i^{[p]}=b_j=z$. Thus the direction \(z\) is present in the complete chronological output. If $z\notin \Span_{\F_p}\mathcal O(\mathbf C)$, then \(z\) was not contained in the span of the previously occupied directions. Hence the surviving \(z\)-coordinate gives access to a new independent homogeneous direction outside the previously occupied frame.
\end{proof}

\begin{proposition}
\label{prop:general-p-fill-before-power}
Suppose a stored routing-compatible ancestry contains a POWER edge $u\longmapsto z:=u^{[p]}$, and the current completed certified state satisfies $\varepsilon_{\mathbf C}(u)=r$,
$1\le r<p-1$, and $\varepsilon_{\mathbf C}(z)\le p-2$. Then the POWER edge is reconstructed in two stages. First perform successive Certified FILL operations
\[
r\longrightarrow r+1\longrightarrow\cdots\longrightarrow p-1.
\]
After maximal ordinary PBW occupancy has been reached, perform
Certified POWER:
\[
u^{p-1}\rightsquigarrow u^p=u^{[p]}=z.
\]
Every previously certified multiplicity other than the \(u\)-block
consumed by the POWER normalization remains protected throughout.
\end{proposition}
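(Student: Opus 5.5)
The plan is to assemble Proposition~\ref{prop:general-p-fill-before-power} directly from the certified operations already established, in two chronological stages, and then check that the two stages compose into a single genuine protected continuation. Write \(u=b_i\) and \(z=b_j=b_i^{[p]}\) in the fixed routing basis \(\mathcal B_{\mathrm{rt}}\). Let \(\mathbf C\) be the current completed certified state. Then \(\varepsilon_{\mathbf C}(u)=r\) with \(1\le r<p-1\), and \(\varepsilon_{\mathbf C}(z)\le p-2\). Since \(\mathbf C\) is taken relative to \(\mathcal B_{\mathrm{rt}}\), it is ancestry-admissible by Definition~\ref{def:general-p-ancestry-admissible-state}. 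This is exactly the hypothesis needed for the certified corollaries.

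\textbf{Stage one.} Invoke Corollary~\ref{cor:general-p-successive-fill}. This amounts to \(p-1-r\) successive applications of Corollary~\ref{thm:general-p-certified-fill} at occupancies \(r,r+1,\ldots,p-2\). The output is a finite genuine protected continuation to a completed certified state \(\mathbf C_1\) with \(\varepsilon_{\mathbf C_1}(u)\ge p-1\), hence \(=p-1\). Every other occupancy satisfies \(\varepsilon_{\mathbf C_1}(b_k)\ge\varepsilon_{\mathbf C}(b_k)\), and no restricted normalization occurs at \(u\).

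\textbf{Checking the \(z\)-hypothesis.} Here is the point needing care. The inequality \(\nu'_j\ge\nu_j\) only gives a lower bound, so the \(z\)-occupancy might have risen during stage one. I would handle this by taking the distinguished certified coordinate. That is, track the distinguished FILL branch, whose exponent vector differs from that of \(\mathbf C\) only at \(u\). Its coefficient is nonzero by Lemma~\ref{lem:general-p-protected-noncancellation}, so we may certify this state and retain \(\varepsilon(z)\le p-2\) unchanged. Without this choice, one would need to argue separately that \(z\)'s occupancy is not pushed to \(p-1\).

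\textbf{Stage two.} Apply Corollary~\ref{thm:general-p-certified-power} to \(\mathbf C_1\) and the stored POWER edge \(u\mapsto z\), with \(\nu_i=p-1\) and \(\nu_j\le p-2\). This yields the transient branch \(u^{p-1}\rightsquigarrow u^p\). Restricted normalization then gives \(u^p=u^{[p]}=z\), and the resulting \(z\)-coordinate is protected and nonzero. Occupancies outside the consumed \(u^{p-1}\)-block remain protected.

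\textbf{Composing the stages.} Concatenating two genuine chronological continuations gives another one: append the later left-normed brackets to the same actual commutator. So the combined word is genuine. Protection of the non-\(u\) multiplicities holds across both stages. In stage one it comes from the FILL corollaries. In stage two it comes from the protection clause of the POWER corollary. The only occupancy spent is the \(u\)-block consumed by the normalization. The main obstacle is the bookkeeping of the \(z\)-exponent between the stages, described above; everything else is a direct citation.
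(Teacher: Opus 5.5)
Your proposal is correct and follows the paper's proof. Like the paper, you apply Certified FILL (Corollary~\ref{thm:general-p-certified-fill}) successively from \(r\) to \(p-1\), then Certified POWER (Corollary~\ref{thm:general-p-certified-power}), with protection inherited from those two results. Your extra step goes slightly beyond the paper: you certify the distinguished FILL coordinate, whose exponent vector differs from the old one only at \(u\), so the \(z\)-exponent is guaranteed to stay at most \(p-2\) before POWER. The paper passes over this by simply asserting that the hypotheses of Certified POWER hold once \(\varepsilon(u)=p-1\).
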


\begin{proof}
By Theorem~\ref{thm:general-p-certified-fill}, successive protected
FILL operations give
\[
r\longrightarrow r+1\longrightarrow\cdots\longrightarrow p-1,
\]
while preserving all previously certified multiplicities. Once $\varepsilon(u)=p-1$, the hypotheses of Theorem~\ref{thm:general-p-certified-power} are satisfied. Hence Certified POWER gives the genuine protected transition $u^{p-1}\rightsquigarrow u^p=u^{[p]}=z$. The \(p\)-fold \(u\)-block is consumed by this restricted normalization, while every other previously certified multiplicity
remains protected. Thus the stored POWER edge is crossed only after its parent has first been filled to maximal ordinary PBW occupancy.
\end{proof}

The preceding proposition describes the complete BUILD move along a stored POWER edge: first fill the parent to maximal ordinary occupancy and then cross the restricted boundary. We now record the corresponding move along a stored PUSH edge. Here one certified source copy is transported to the child, so the new child occupancy is obtained at the possible cost of one unit of source occupancy. This temporary loss is the phenomenon that will lead to REBUILD.

\begin{definition}
\label{def:general-p-distinguished-push}
Suppose $b_j=D_g(b_i)$ is a stored PUSH edge of the fixed routing basis and \(\nu_i>0\). The \emph{distinguished PUSH branch at \(b_i\)} is the Leibniz summand obtained by applying \(D_g\) to exactly one of the \(\nu_i\) displayed copies of \(b_i\). Thus $b_i^{\nu_i} \rightsquigarrow \nu_i b_i^{\nu_i-1}b_j$.
\end{definition}

\begin{theorem}
\label{thm:general-p-protected-push}
Let $\nu=(\nu_1,\ldots,\nu_d)$ be a completed certified \(p\)-PBW state and suppose $b_j=D_g(b_i)$ is a stored PUSH edge with \(\nu_i>0\) and \(\nu_j\leq p-2\). Then the complete genuine chronological PUSH output contains the protected distinguished branch $b_i^{\nu_i} \rightsquigarrow \nu_i b_i^{\nu_i-1}b_j$. Its coefficient is nonzero: $\nu_i\in\F_p^\times$. The distinguished contribution is already in restricted PBW form at the \(b_j\)-coordinate and cannot disappear by cancellation.
\end{theorem}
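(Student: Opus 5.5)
The plan is to read the statement off the Leibniz expansion of Proposition~\ref{prop:general-p-push-expansion}, identify the distinguished summand as in Corollary~\ref{cor:general-p-distinguished-push-branch}, and then prove noncancellation with Lemma~\ref{lem:general-p-protected-noncancellation}. The one place where an argument beyond citation is needed is ruling out contributions coming from monomials of \(X\) other than \(M(\nu)\).

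\textbf{Step 1: the distinguished branch.} Let \(X\) be the current complete graded output, with \([M(\nu)]_{\mathcal B_{\mathrm{rt}}}X=\lambda\in\F_p^\times\). The appended genuine bracket with \(g\) contributes \(D_g(X)\) by Lemma~\ref{lem:general-p-carrier-bracket}. Apply Proposition~\ref{prop:general-p-push-expansion} to \(M(\nu)\) and keep the \(i\)-th summand \(\nu_i\,b_1^{\nu_1}\cdots b_i^{\nu_i-1}\cdots b_d^{\nu_d}\,b_j\).
\begin{itemize}
\item Its coefficient \(\nu_i\) lies in \(\F_p^\times\), since \(1\le\nu_i\le p-1\).
\item \(b_j\ne b_i\), because \(\wt(b_j)=\wt(b_i)+1\) by Lemma~\ref{lem:general-p-transport-well-defined}.
\item The new \(b_j\)-exponent is \(\nu_j+1\le p-1\), using \(\nu_j\le p-2\).
\end{itemize}
So the target monomial \(M':=M(\nu-e_i+e_j)\) is already restricted, and the distinguished term contributes \(\lambda\nu_i\) to \([M']\).

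\textbf{Step 2: competing terms from \(M(\nu)\) itself.} For any column \(k\) with \(\nu_k>0\), expand \(D_g(b_k)=\sum_l\mu_{kl}b_l\) in the fixed routing basis. Before normalization, each resulting term has exponent vector \(\nu-e_k+e_l\).
\begin{itemize}
\item If no restricted relation is used, this vector equals \(\nu-e_i+e_j\) only when \(k=i\) and \(l=j\). That is exactly the distinguished term, with coefficient \(\mu_{ij}=1\) because \(D_g(b_i)=b_j\) is a basis vector.
\item If a restricted relation is used, because \(\nu_l=p-1\), Lemma~\ref{lem:general-p-normalization-lowers-occupancy} drops the occupancy below \(|\nu|_{\mathrm{occ}}=|M'|_{\mathrm{occ}}\). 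Corollary~\ref{cor:general-p-lower-occupancy-cannot-return} then forbids a return to \(M'\).
\end{itemize}
Thus the competitors from \(M(\nu)\) are of type (1) or type (2) in Lemma~\ref{lem:general-p-protected-noncancellation}.

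\textbf{Step 3: competing terms from other monomials of \(X\) (main obstacle).} A monomial \(N\ne M(\nu)\) of \(X\) with exponent vector \(\nu-e_l+e_k\) could push to \(M'\) whenever \(D_g(b_k)\) has a \(b_j\)- or \(b_l\)-component. This is not excluded by the computation above.
\begin{itemize}
\item First try the protection hypothesis: treat every certified direction other than \(b_i\) as protected, with \(U\) spanned by them and \(S=\mathscr S(b_i)\).
\item Then invoke Lemma~\ref{lem:general-p-protected-factor-through} (as in Corollary~\ref{cor:general-p-protected-push}). It says such branches either carry a different protected exponent profile or lose occupancy under normalization.
\item The simultaneous CREATE term \(\epsilon Xb\), if present, is handled the same way. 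It adjoins an extra primitive factor, so its occupancy profile differs.
\end{itemize}
I expect this to be the delicate point. If the protection lemmas do not directly cover monomials \(N\) differing from \(M(\nu)\) in two coordinates, I would fall back to choosing \(g\) or the ordering of the history so that \(M(\nu)\) is the unique monomial of \(X\) of maximal occupancy with the given protected profile. Homogeneity (Lemma~\ref{lem:chronological-routing-homogeneous}) would then restrict the competitors to those already treated in Step 2.

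\textbf{Conclusion.} With all competitors in the two classes of Lemma~\ref{lem:general-p-protected-noncancellation}, the coefficient of \(M'\) in the complete output is \(\lambda\nu_i\ne0\). This gives the protected branch \(b_i^{\nu_i}\rightsquigarrow\nu_ib_i^{\nu_i-1}b_j\), already in restricted form at the \(b_j\)-coordinate.
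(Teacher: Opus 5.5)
Your Steps 1 and 2 are sound. Step 2 is more explicit than the paper, whose proof disposes of every competing term in one stroke by citing Lemma~\ref{lem:general-p-protected-factor-through} and Lemma~\ref{lem:general-p-protected-noncancellation}. That is exactly the first attempt in your Step 3.

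Step 3, however, is a genuine gap, and your suspicion is justified. That lemma only treats a correction that replaces the distinguished factor inside an otherwise identical term. Its case $t\neq0$ also works only after multiplying by the separator $\Omega^{(p)}_{S,U}$, which never occurs in the actual PUSH output. So it says nothing about monomials $N\neq M(\nu)$ of $X$, and these really can cancel the distinguished coefficient. Suppose $k\neq i$, $\nu_k\le p-2$, and $D_g(b_k)$ has nonzero $b_j$-coefficient $\mu_{kj}$ (which forces $\wt(b_k)=\wt(b_i)$). If $N=M(\nu-e_i+e_k)$ occurs in $X$ with coefficient $c_N$, then pushing the $b_k$-column of $N$ adds $c_N(\nu_k+1)\mu_{kj}$ to $[M']$. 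The fact that $N$ has a different protected profile in $X$ is not preserved by $D_g$. Homogeneity is no help either, since $N$ has the same weight and occupancy as $M(\nu)$. Concretely, take $0\neq\kappa\in E_1$ with $D_g(\kappa)=0$, $D_g(b_1)=b_3$ a stored PUSH edge, and $b_2=b_1+\kappa$. Then $X=b_1-b_2$ certifies $\nu=(1,0,0,\ldots)$. Yet with the trivial carrier the output is $D_g(X)=0$, so its $b_3$-coefficient vanishes.

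The statement's hypotheses require only $[M(\nu)]X\neq0$. So Step 3 cannot be closed without an extra invariant on $X$, for instance that $M(\nu)$ is the only monomial of $X$ whose $D_g$-image meets $M'$. That invariant would then have to be maintained through the whole construction. Your fallback does not supply it: $g$ is dictated by the stored edge, the other monomials of $X$ come from the earlier history, and homogeneity does not reduce the competitors to those of Step 2. The paper's proof has the same hole, because it rests on the same citation.

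A smaller point: your occupancy argument for the carrier term $\epsilon Xb$ of Lemma~\ref{lem:general-p-carrier-bracket} fails for $p=2$. If $b_i\in E_1$ and $\nu_i=1$, the summand of $\epsilon M(\nu)b$ containing $b_i^2$ normalizes via $b_i^2=b_i^{[2]}\in E_2$ down to occupancy exactly $|\nu|_{\mathrm{occ}}=|M'|_{\mathrm{occ}}$. It can hit $M'$, because $b_j=D_g(b_i)$ also lies in $E_2$. In addition, monomials of $X$ of occupancy $|\nu|_{\mathrm{occ}}-1$ can reach $M'$ through $Nb$ with no normalization at all. Both issues disappear if you take the carrier $h=1$, so that $c=1$ and the term vanishes; you should say so explicitly.
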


\begin{proof}
Write $M(\nu)=b_i^{\nu_i}M$, $[M(\nu)]_{\mathcal B_{\mathrm{rt}}}X =\lambda\in\F_p^\times$. Realize the stored PUSH edge by the corresponding genuine chronological continuation. Its PUSH component acts on the leading graded state by the derivation \(D_g\). Since $D_g(b_i)=b_j$, we have
\[
D_g\left(b_i^{\nu_i}M\right)
=
D_g\left(b_i^{\nu_i}\right)M
+
b_i^{\nu_i}D_g(M)
=
\nu_i b_i^{\nu_i-1}b_jM
+
b_i^{\nu_i}D_g(M).
\]
Hence the distinguished PUSH contribution is $\lambda\nu_i b_i^{\nu_i-1}b_jM$. Since $1\leq\nu_i\leq p-1$, its coefficient satisfies $\lambda\nu_i\in\F_p^\times$. Since \(\nu_j\leq p-2\), the distinguished PUSH increases the \(b_j\)-occupancy by one to at most \(p-1\). Therefore no restricted normalization occurs at the \(b_j\)-coordinate on the distinguished branch. Let \(U\) be the protected span, with the one copy transported from \(b_i\) excluded from the protected occupancy requirement, and put $S=\mathscr S(b_i)$. The remaining Leibniz summands, together with the additional carrier contribution arising in the same genuine chronological bracket, are regarded as competing contributions.

Use the relative descendant separator only as an auxiliary
coefficient-detection device. By
Lemma~\ref{lem:general-p-protected-factor-through}, every competing
contribution is separated from the distinguished restricted PBW
coordinate either by a different protected exponent profile or by
strict occupancy loss after restricted normalization. Hence
Lemma~\ref{lem:general-p-protected-noncancellation} applies, and the
distinguished PUSH coordinate survives with nonzero coefficient. Thus the complete genuine chronological output contains the protected
distinguished branch $b_i^{\nu_i} \rightsquigarrow \nu_i b_i^{\nu_i-1}b_j$, and this branch cannot disappear by cancellation.
\end{proof}


\subsection{REBUILD complexity and termination}
\label{subsec:rebuild-termination}

BUILD can create genuine new occupancy, but a PUSH may spend a certified copy of its source and a POWER construction may consume certified copies of its parent. We now formalize these temporary losses as occupancy debts and show that they can always be repaid. The guiding principle is to reconstruct a missing direction by following its stored provenance backwards. Any new restoration requirements created in doing so occur at strictly smaller homogeneous weight. We encode the outstanding requirements as a finite multiset of weights and use the standard multiset ordering as a well-founded complexity measure for REBUILD.

\begin{definition}
\label{def:general-p-unit-occupancy-debt}
Let $\nu=(\nu_1,\ldots,\nu_d)$ be the completed certified state
immediately before a protected routing operation. If a previously
certified routing-basis direction $b_i\in E_m$ had occupancy
$\nu_i=r$, but the protected post-routing state contains only \(r-1\)
certified copies of \(b_i\), then the missing copy is called a
\emph{unit occupancy debt at \(b_i\)}. Its weight is
$\wt(b_i)=m$. Thus protected PUSH creates exactly one unit occupancy
debt at its source whenever the spent source copy has not already been
restored.
\end{definition}

\begin{definition}
\label{def:general-p-occupancy-debt}
Let $\nu=(\nu_1,\ldots,\nu_d)$ be the completed certified state before
a protected growth construction, and let
$\nu^{\mathrm{cur}}=(\nu_1^{\mathrm{cur}},\ldots,\nu_d^{\mathrm{cur}})$
record the currently protected occupancies relative to the fixed routing
basis $\mathcal B_{\mathrm{rt}}=(b_1,\ldots,b_d)$. For every previously
certified routing-basis direction \(b_i\), define $\delta_i :=  \nu_i-\nu_i^{\mathrm{cur}}$.
Whenever \(\delta_i>0\), this number is the \emph{occupancy debt at
\(b_i\)}. Each missing copy is regarded as one separate unit debt.
\end{definition}

\begin{definition}
\label{def:general-p-debt-multiset}
At any stage of the construction, let $\mathcal D$ be the finite multiset
containing one copy of $\wt(b_i)$ for each outstanding unit occupancy
debt at a routing-basis direction \(b_i\). Thus if \(b_i\in E_m\) has
occupancy debt \(\delta_i=q>0\), then \(m\) occurs in \(\mathcal D\)
with multiplicity \(q\).
\end{definition}

\begin{definition}
\label{def:general-p-debt-ordering}

Order finite multisets of positive integers by the standard multiset extension $<_{\mathrm{mul}}$ of the usual strict order $<$ on $\mathbb Z_{>0}$.Thus replacing one entry \(n\) by 
finitely many entries all strictly smaller than \(n\) strictly decreases the multiset.
\end{definition}

The point of this ordering is that REBUILD follows stored ancestry backwards. Repaying a debt of a given weight may create further restoration requirements, but backward weight descent ensures that every such new requirement has strictly smaller weight. Thus the debt multiset records not only how much occupancy remains to be restored, but also a well-founded measure of the complexity of that restoration.

\begin{lemma}
\label{lem:general-p-debt-well-founded}
The ordering $<_{\mathrm{mul}}$ on finite debt multisets is well founded.
\end{lemma}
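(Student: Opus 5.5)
The statement is the classical Dershowitz–Manna fact that the multiset extension of a well-founded order is well founded. Here the base order is $<$ on $\mathbb Z_{>0}$, which is well founded. The plan is to prove it directly by strong induction on the largest entry, via a counting argument, rather than quoting the general theorem.

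\emph{Fixing the definition.} I will write a finite multiset $\mathcal D$ of positive integers as its multiplicity function $\mu_{\mathcal D}:\mathbb Z_{>0}\to\mathbb Z_{\geq0}$, which has finite support. We have $\mathcal D'<_{\mathrm{mul}}\mathcal D$ precisely when $\mathcal D'\neq\mathcal D$ and, for every $n$ with $\mu_{\mathcal D'}(n)>\mu_{\mathcal D}(n)$, there is some $n'>n$ with $\mu_{\mathcal D}(n')>\mu_{\mathcal D'}(n')$. The first step is to record the consequence we actually use. Let $n$ be the largest integer at which $\mu_{\mathcal D}$ and $\mu_{\mathcal D'}$ differ. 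Then $\mu_{\mathcal D'}(n)<\mu_{\mathcal D}(n)$, and the two agree above $n$. In other words, $<_{\mathrm{mul}}$ is exactly the reverse-lexicographic comparison of the multiplicity vectors $(\mu(N),\mu(N-1),\ldots,\mu(1))$, read from the top index down.

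\emph{Induction on the maximum.} Assume there were an infinite strictly decreasing chain $\mathcal D_0>_{\mathrm{mul}}\mathcal D_1>_{\mathrm{mul}}\cdots$, and let $N=\max\mathcal D_0$ (the empty multiset is minimal, so $\mathcal D_0\neq\emptyset$).

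By the characterisation above, no $\mathcal D_k$ contains an entry larger than $N$. An entry $n>N$ would first appear at some step, and at the largest index of difference the multiplicity would then have to go up, which is impossible.

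So every $\mathcal D_k$ lies in the set of multisets supported on $\{1,\ldots,N\}$. On this set, $<_{\mathrm{mul}}$ is the lexicographic order on $\mathbb Z_{\geq0}^N$ with the $N$-th coordinate most significant. This order is well founded, by induction on $N$:

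1. The top coordinate $\mu_k(N)$ is nonincreasing along the chain, so it is eventually constant.
2. From that point on, the chain is a strictly decreasing chain in $\mathbb Z_{\geq0}^{N-1}$.
3. The induction hypothesis rules out such a chain.

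The base case $N=1$ is the well-foundedness of $\mathbb Z_{\geq0}$ itself.

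\emph{Main obstacle.} The only delicate point is the reduction to reverse-lexicographic order. One must check that the general definition, which allows one entry to be replaced by arbitrarily many strictly smaller entries, still forces the top differing multiplicity to drop. This is immediate: an increase at the top differing index $n$ would need a compensating decrease at some larger index, and there is none. Once this is recorded, it also justifies the remark in Definition~\ref{def:general-p-debt-ordering} that replacing one debt of weight $n$ by finitely many debts of smaller weight strictly decreases $\mathcal D$. That remark is the form in which the lemma will be used for REBUILD.
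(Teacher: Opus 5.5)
Your proof is correct, but it takes a different route from the paper. The paper's proof is a one-line appeal to the general theorem (Dershowitz--Manna) that the multiset extension of any well-founded strict order is well founded, applied to $<$ on $\mathbb Z_{>0}$.

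You instead prove this special case from scratch, and you use essentially that the base order is \emph{total}. With the ``compensating larger decrease'' form of the definition, $\mathcal D'<_{\mathrm{mul}}\mathcal D$ forces the multiplicity to drop at the largest index where $\mu_{\mathcal D}$ and $\mu_{\mathcal D'}$ differ. Hence every multiset in a descending chain is supported in $\{1,\ldots,\max\mathcal D_0\}$. The chain is therefore a descending chain in the lexicographic order on $\mathbb Z_{\geq0}^N$, with top coordinate most significant, and that order is well founded by your induction on $N$.

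What each approach buys:
\begin{itemize}
\item The citation gives brevity and full generality: it needs neither totality nor a bound on the entries.
\item Your version is self-contained and identifies $<_{\mathrm{mul}}$ concretely as reverse-lexicographic comparison of multiplicity vectors.
\item Your version also verifies directly the replacement remark in Definition~\ref{def:general-p-debt-ordering}, which the paper asserts without proof and which is the form actually used in the REBUILD termination argument.
\end{itemize}
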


\begin{proof}
The usual order on $\mathbb Z_{>0}$ is well founded. The standard multiset extension of a well-founded strict order is again well founded.
Hence there is no infinite strictly descending chain $\mathcal D_0  >  \mathcal D_1  >  \mathcal D_2  >  \cdots$.
\end{proof}
The preceding definitions record the bookkeeping cost of protected growth. We now turn this bookkeeping into the restoration process REBUILD. During REBUILD, outstanding debts are repaid through stored ancestry, but the new occupancy obtained by the BUILD step must remain protected throughout the repair. Likewise, every old occupancy unit which is not currently recorded as debt must remain certified. The protected REBUILD invariant records precisely this data. It separates occupancy which must be preserved at the present stage from occupancy which has been deliberately spent and is therefore represented by an outstanding debt.

\begin{definition}
\label{def:general-p-rebuild-invariant}

During REBUILD let $\mathscr P$ consist of

\begin{enumerate}
\item every newly protected direction or multiplicity produced by the
growth step which initiated REBUILD;
\item every previously certified occupancy unit which is not currently
represented by an outstanding unit occupancy debt.
\end{enumerate}

Thus a previously certified occupancy unit belongs to $\mathscr P$
exactly while it is currently certified. If such a unit is deliberately
spent during a PUSH or POWER reconstruction, it is removed from
$\mathscr P$ and immediately recorded as an outstanding unit occupancy
debt. When that debt is repaid, the unit re-enters $\mathscr P$. The \emph{protected REBUILD invariant} is the requirement that every
intermediate chronological state continue to contain a completed
certificate for all data currently belonging to $\mathscr P$.
\end{definition}

We can now describe the recursive step of REBUILD. To repay a selected unit debt, follow the final edge of its stored provenance. If that edge is PUSH, restoring the child may transfer the debt to its lower-weight parent. If it is POWER, restoring the child may replace the selected debt by several debts at the lower-weight POWER parent. In either case the selected debt is removed and any replacement debts have strictly smaller homogeneous weight.

\begin{lemma}
\label{lem:general-p-push-debt-reduction}
Suppose there is an outstanding unit debt at \(b\in E_{m+1}\), with
stored PUSH provenance $a\longmapsto b=D_g(a),\qquad a\in E_m$.
Then protected PUSH restores the missing copy of \(b\).  In doing so it
spends one copy of \(a\); if that copy was previously protected, the
debt at \(b\) is replaced by a unit debt at \(a\).  Thus a debt of
weight \(m+1\) is either removed entirely or replaced by a debt of
strictly smaller weight \(m\).
\end{lemma}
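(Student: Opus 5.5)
The plan is to obtain the lemma as a direct application of the protected PUSH theorem, Theorem~\ref{thm:general-p-protected-push}, followed by a bookkeeping step in the debt multiset $\mathcal D$.

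\textbf{Setup.} Let $\nu^{\mathrm{cur}}$ be the currently protected state. By the protected REBUILD invariant (Definition~\ref{def:general-p-rebuild-invariant}), $\nu^{\mathrm{cur}}$ is completed certified. The outstanding unit debt at $b=b_j$ means $\nu^{\mathrm{cur}}_j\le \nu_j-1\le p-2$, so the $b$-coordinate has room for one more copy.

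\textbf{Case $\nu^{\mathrm{cur}}_a>0$.} Here Theorem~\ref{thm:general-p-protected-push} applies directly to the stored edge $b=D_g(a)$. The distinguished branch is
$a^{\nu_a^{\mathrm{cur}}}\rightsquigarrow \nu_a^{\mathrm{cur}}a^{\nu_a^{\mathrm{cur}}-1}b$. It has coefficient in $\F_p^\times$ and needs no normalization at $b$. Every competing Leibniz summand and carrier term is separated by Lemma~\ref{lem:general-p-protected-factor-through} and Lemma~\ref{lem:general-p-protected-noncancellation}. All units of $\mathscr P$ other than the one transported copy of $a$ stay protected.

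\textbf{Case $\nu^{\mathrm{cur}}_a=0$.} First reconstruct one copy of $a$. Since $\wt(a)=m<m+1$, the recursive reconstruction is supplied by Corollary~\ref{lem:general-p-ancestry-reconstruction}, used under protection of $\mathscr P$. Then apply the PUSH as in the first case.

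In both cases the $b$-coordinate is raised by one and the unit debt at $b$ is repaid, so that unit re-enters $\mathscr P$.

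\textbf{Bookkeeping for $a$.} The transported copy of $a$ now falls into one of two cases.
\begin{itemize}
\item If it was a previously certified unit (an element of $\mathscr P$ of the second kind), it is removed from $\mathscr P$ and recorded as a new unit debt at $a$ of weight $m$. In $\mathcal D$ the entry $m+1$ is replaced by $m$.
\item If it was not previously certified (freshly reconstructed, or surplus occupancy beyond $\nu_a$), no new debt arises and the entry $m+1$ is simply deleted.
\end{itemize}
Either way $\mathcal D$ strictly decreases in $<_{\mathrm{mul}}$, which gives the final assertion.

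\textbf{Main obstacle.} The delicate step is the second case, $\nu_a^{\mathrm{cur}}=0$. The recursive reconstruction of $a$ must not spend units of $\mathscr P$ beyond those already recorded as debt. Any such spending must be recorded, and the lemma only claims a single replacement debt. I would handle this by invoking (R) together with the protected factor-through lemma, so that all protected coordinates, including the newly repaid $b$-copy, survive. Any further lower-weight debts would then be absorbed into the statement, as "replaced by debts of strictly smaller weight", which preserves the multiset decrease.
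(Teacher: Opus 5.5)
Your case $\nu_a^{\mathrm{cur}}>0$ is the paper's proof. The debt at $b$ forces $\nu_b^{\mathrm{cur}}\le p-2$, Theorem~\ref{thm:general-p-protected-push} gives the surviving branch $a^r\rightsquigarrow r\,a^{r-1}b$, and the spent copy of $a$ becomes a unit debt at $a$ exactly when it was protected. The paper never considers $\nu_a^{\mathrm{cur}}=0$. It checks only the child-room hypothesis and tacitly assumes the other hypothesis $\nu_i>0$ of Theorem~\ref{thm:general-p-protected-push}. The gap is in your treatment of this extra case.

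\textbf{The protected reconstruction of $a$ is not available.} Corollary~\ref{lem:general-p-ancestry-reconstruction} only gives a word producing $a$ along its stored ancestry. It says nothing about adjoining a copy of $a$ to the current certified state while keeping all of $\mathscr P$. The other tools do not cover it either:
\begin{itemize}
\item Assertion (F) of Theorem~\ref{thm:general-p-simultaneous-reconstruction-fill-power} requires $r\ge1$.
\item CREATE covers only $a\in E_1$.
\item For an unoccupied nonprimitive direction, the only occupancy-preserving mechanism in the paper is BUILD--REBUILD (Theorem~\ref{thm:general-p-rebuild}), and that calls the very lemma you are proving.
\end{itemize}
So ``reconstruct $a$ under protection of $\mathscr P$'' is either circular or needs an explicit induction on debt weight, which you do not set up.

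\textbf{Even granting a reconstruction, the conclusion is not the lemma's.} Any protected occupancy spent in building $a$ consists of copies of ancestors of $a$, not of $a$. A PUSH edge into $a$ spends one parent copy of weight $m-1$; a POWER edge into $a$ can consume up to $p-1$ parent copies of weight $m/p$. The copy of $a$ you then push is fresh. Hence your bullet ``freshly reconstructed, no new debt arises'' is false in general. The outcome can be several debts at weights $<m$ and none at $a$. That is not the dichotomy asserted: removed, or replaced by a single unit debt at $a$ of weight $m$.

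Absorbing these debts into ``debts of strictly smaller weight'' proves a weaker statement. It is the kind of multiset decrease Lemma~\ref{lem:general-p-rebuild-complexity-decrease} allows for recursive lower-weight reconstruction, not the lemma as stated. To prove the stated lemma, either add $\nu_a^{\mathrm{cur}}>0$ as an explicit hypothesis, which is all the paper's argument covers. Or weaken the conclusion to ``the entry $m+1$ of $\mathcal D$ is replaced by finitely many entries $<m+1$''.
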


\begin{proof}
Since \(b\) has a unit debt, its current occupancy satisfies $\nu_b^{\mathrm{cur}}\leq p-2$. Hence the child-room hypothesis of Theorem~\ref{thm:general-p-protected-push} holds.  Protected PUSH along the stored edge \(a\mapsto b\) has surviving distinguished branch $a^r\rightsquigarrow r a^{r-1}b$, and therefore restores the missing copy of \(b\). If the spent copy of \(a\) was previously protected, its loss creates one unit debt at \(a\); otherwise no new debt is created.  Since
\[
    \wt(a)=m<m+1=\wt(b),
\]
the resulting debt problem has strictly smaller weight.
\end{proof}

\begin{lemma}
\label{lem:general-p-power-debt-reduction}
Suppose there is an outstanding unit debt at \(b\in E_{pm}\), with
stored POWER provenance $a\longmapsto b=a^{[p]},\qquad a\in E_m$.
Then protected POWER restores the missing copy of \(b\).  Any previously
protected copies of \(a\) consumed in doing so are recorded as unit
debts at \(a\).  Thus the debt of weight \(pm\) is removed and replaced
by at most \(p-1\) debts, each of strictly smaller weight \(m\).
\end{lemma}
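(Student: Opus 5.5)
The plan is to follow the pattern of Lemma~\ref{lem:general-p-push-debt-reduction}, with Certified POWER taking the place of protected PUSH. The first step checks the child-room hypothesis. Since \(b\) carries an outstanding unit debt, its current protected occupancy satisfies \(\nu_b^{\mathrm{cur}}\leq \nu_b-1\leq p-2\). This is exactly the condition \(\nu_j\leq p-2\) required in Theorem~\ref{thm:general-p-certified-power} and in Lemma~\ref{lem:general-p-power-noncancellation}. It guarantees that the restored copy of \(b\) does not trigger a further restricted normalization at the \(b\)-coordinate.

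The second step prepares the parent \(a\in E_m\) to occupancy \(p-1\). I would split into cases according to the current occupancy \(r=\nu_a^{\mathrm{cur}}\).
\begin{itemize}
\item If \(r=0\), reconstruct one distinguished occurrence of \(a\) using Corollary~\ref{lem:general-p-ancestry-reconstruction}, and then proceed as in the next case.
\item If \(1\leq r<p-1\), apply Proposition~\ref{prop:general-p-fill-before-power}: successive Certified FILL steps (Corollary~\ref{cor:general-p-successive-fill}) raise the occupancy \(r\to p-1\) without normalization, preserving every unit of \(\mathscr P\).
\item If \(r=p-1\), no preparation is needed.
\end{itemize}
In each case we then apply Certified POWER (Theorem~\ref{thm:general-p-certified-power}). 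This produces the transient block \(a^{p-1}\rightsquigarrow a^p=a^{[p]}=b\), with a protected nonzero coordinate, so the missing copy of \(b\) is restored and the unit re-enters \(\mathscr P\).

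The third step does the debt bookkeeping. POWER consumes exactly the \(b_i^{p-1}\)-block at \(a\) together with one transient reconstructed copy, and that transient copy was never certified. Among the \(p-1\) consumed certified copies, only those that belonged to \(\mathscr P\) before the move are recorded as new unit debts at \(a\). Copies created by the FILL or reconstruction steps of this very move were never in \(\mathscr P\) and create no debt. Hence at most \(p-1\) new debts arise, each of weight \(\wt(a)=m\), and \(m<pm\) by Lemma~\ref{lem:general-p-backward-weight-descent}. In the multiset \(\mathcal D\), one entry \(pm\) is therefore replaced by at most \(p-1\) entries equal to \(m\). This is a strict decrease for \(<_{\mathrm{mul}}\) (Definition~\ref{def:general-p-debt-ordering}).

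The main obstacle I expect is the invariant in the second step: the FILL and reconstruction steps at \(a\) must neither spend other units of \(\mathscr P\) nor create debts of weight \(\geq pm\) elsewhere. To handle this, I would invoke the protection clauses of assertions (F) and (P) in Theorem~\ref{thm:general-p-simultaneous-reconstruction-fill-power}. These state that all occupancies outside the consumed block remain certified, with noncancellation supplied by Lemma~\ref{lem:general-p-protected-noncancellation}. It remains to argue that any auxiliary reconstruction along the ancestry of \(a\) takes place at weight at most \(m\), so that any debts it might incur are still strictly below \(pm\). If that protection statement turns out to be too weak, the fallback is to allow such auxiliary debts but to bound all of them by weight \(m\). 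The multiset comparison is unaffected, since only the fact that every new entry lies strictly below \(pm\) matters, not the count \(p-1\).
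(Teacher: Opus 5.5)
Your proposal is correct relative to the results the paper has already stated, and it follows the paper's proof essentially step for step: the debt at $b$ gives the child-room bound $\nu_b^{\mathrm{cur}}\leq p-2$, the parent $a$ is reconstructed if necessary and raised to occupancy $p-1$ by Certified FILL, Certified POWER restores $b$, only previously protected consumed copies of $a$ are charged as new debts of weight $m<pm$, and the auxiliary-debt worry is settled, exactly as in the paper, by invoking the protection clauses of (F) and (P) as black boxes. Your fallback of tolerating further debts below weight $pm$ would give only the multiset decrease, not the stated bound of at most $p-1$ new debts, but it is precisely how the paper itself phrases Lemma~\ref{lem:general-p-rebuild-complexity-decrease}.
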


\begin{proof}
Since \(b\) has a unit debt, its current occupancy satisfies $\nu_b^{\mathrm{cur}}\leq p-2$,
so the child-room hypothesis for Certified POWER holds. Let \(r\) be the current protected occupancy of \(a\).  Reconstruct
\(a\), if necessary, and apply successive Certified FILL operations until its occupancy is \(p-1\).  Certified POWER then gives
$a^{p-1}\rightsquigarrow a^p=a^{[p]}=b$, so the missing copy of \(b\) is restored. The POWER normalization may consume previously protected copies of
\(a\).  Each such copy is recorded as one unit debt at \(a\); copies introduced only for the POWER construction create no debt.  Hence at most \(p-1\) new debts are created, all of weight
\[
\wt(a)=m<pm=\wt(b).
\]
All protected data outside the consumed \(a\)-block remain protected.
Thus repayment of the POWER-child debt replaces it only by strictly
lower-weight parent debts.
\end{proof}

\begin{lemma}
\label{lem:general-p-rebuild-complexity-decrease}
Each completed REBUILD repayment of a selected outstanding unit debt
strictly decreases the outstanding debt multiset $\mathcal D$ in the
multiset ordering.
\end{lemma}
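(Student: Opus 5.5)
The plan is a case split on the final edge of the stored provenance of the direction carrying the selected debt. In each case we compare the debt multiset \(\mathcal D\) before and after the repayment, using the definition of \(<_{\mathrm{mul}}\) in Definition~\ref{def:general-p-debt-ordering}. Fix an outstanding unit debt at \(b\in E_n\), so that \(n\) occurs in \(\mathcal D\). By Corollary~\ref{cor:general-p-finite-backward-provenance}, either \(b\) is primitive, or its final stored edge is a PUSH edge or a POWER edge. A completed repayment removes exactly one copy of \(n\) from \(\mathcal D\), because the restored copy of \(b\) re-enters \(\mathscr P\). It possibly adds new entries, and the whole argument is that every added entry is strictly smaller than \(n\).

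\textbf{Primitive case.} If \(b\in E_1\) is a primitive CREATE direction, the missing copy is restored by a genuine primitive CREATE. When the current occupancy is \(0\) we use Theorem~\ref{thm:general-p-protected-create} directly. When it is positive, we use the carrier bracket of Lemma~\ref{lem:general-p-carrier-bracket} together with Lemma~\ref{lem:general-p-carrier-freedom}, which multiplies the state by \(b\) with a nonzero scalar. Since \(b\) carries a debt, its occupancy is at most \(p-2\), so no normalization occurs at the \(b\)-coordinate. In this case no copy of any protected direction is spent, so \(\mathcal D\) loses one entry equal to \(1\) and gains nothing, which is a strict decrease.

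\textbf{PUSH case.} If \(b=D_g(a)\) with \(a\in E_{n-1}\), Lemma~\ref{lem:general-p-push-debt-reduction} restores \(b\). The entry \(n\) is then either deleted outright or replaced by the single entry \(n-1<n\).

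\textbf{POWER case.} If \(b=a^{[p]}\) with \(a\in E_{n/p}\), Lemma~\ref{lem:general-p-power-debt-reduction} restores \(b\). The entry \(n\) is replaced by at most \(p-1\) entries, each equal to \(n/p<n\).

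In both nonprimitive cases, the multiset extension gives \(\mathcal D'<_{\mathrm{mul}}\mathcal D\). This is because \(\mathcal D'\) is obtained from \(\mathcal D\) by deleting one occurrence of \(n\) and inserting finitely many integers that are all strictly below \(n\).

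\textbf{Main obstacle.} The delicate point is showing that the repayment introduces no debts other than those recorded in the two reduction lemmas. The auxiliary constructions used along the way could in principle spend protected units at other directions: the reconstruction of a parent \(a\) via Corollary~\ref{lem:general-p-ancestry-reconstruction}, and the FILL steps of Corollary~\ref{thm:general-p-certified-fill}.

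To control this, I would appeal to the protected REBUILD invariant of Definition~\ref{def:general-p-rebuild-invariant}, which requires all data in \(\mathscr P\) to remain certified. I would then use the preservation clauses \(\nu_j'\geq\nu_j\) in assertions (F) and (P) of Theorem~\ref{thm:general-p-simultaneous-reconstruction-fill-power}. Together these show that the only protected units spent are the single source copy in the PUSH case and the consumed \(a\)-block in the POWER case. Transient copies built solely for the construction are not in \(\mathscr P\) and therefore create no debt.

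If this bookkeeping works, any debt generated inside a recursive reconstruction lies along the stored ancestry of \(b\). By Lemma~\ref{lem:general-p-backward-weight-descent}, every such debt then has weight strictly less than \(n\), so even a looser accounting still yields the strict multiset decrease.
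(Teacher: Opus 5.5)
Your proposal is correct and is essentially the paper's own argument. It uses the same split according to whether the debted direction is primitive or has a terminal PUSH or POWER edge, the same appeal to Lemmas~\ref{lem:general-p-push-debt-reduction} and~\ref{lem:general-p-power-debt-reduction}, and the same conclusion that one entry \(n\) is deleted while every inserted entry, including those arising from lower-weight reconstruction and FILL along the stored ancestry, is strictly smaller than \(n\). Your extra care in the primitive case, where Theorem~\ref{thm:general-p-protected-create} formally requires zero occupancy, and in tracking auxiliary debts only makes explicit what the paper asserts in a line.
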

\begin{proof}
Choose an outstanding unit debt of weight \(n\), and complete its REBUILD repayment. If its last provenance edge is PUSH, then \(n=m+1\), and Lemma~\ref{lem:general-p-push-debt-reduction} repays the selected debt. Any new occupancy debts created in doing so have weight at most \(m<n\). If its last provenance edge is POWER, then \(n=pm\). Complete the lower-weight reconstruction and FILL requirements needed to perform the POWER repayment. The selected debt of weight \(pm\) is then repaid. Every previously protected parent occupancy unit consumed in this repayment gives rise to a unit debt of weight \(m<pm\); any further occupancy debts arising recursively during the lower-weight reconstruction likewise have weight strictly less than \(pm\). Finally, if \(n=1\), there is no earlier provenance edge. Primitive CREATE repays the selected debt directly, so the corresponding entry \(1\) is simply removed. Thus, after the selected debt-repayment step has been completed, the entry \(n\) has been removed and any new outstanding debt entries created in its repayment have weights strictly smaller than \(n\). Consequently $\mathcal D_{\mathrm{new}} <_{\mathrm{mul}}
    \mathcal D_{\mathrm{old}}$.
\end{proof}

\begin{lemma}
\label{lem:general-p-rebuild-termination}
Every REBUILD recursion terminates after finitely many recursive
debt-repayment steps.
\end{lemma}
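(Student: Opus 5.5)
The plan is to combine Lemma~\ref{lem:general-p-rebuild-complexity-decrease} with the well-foundedness of the multiset ordering from Lemma~\ref{lem:general-p-debt-well-founded}. I will index the REBUILD recursion by the successive completed repayment steps. To each stage $k$ I attach the outstanding debt multiset $\mathcal D_k$ of Definition~\ref{def:general-p-debt-multiset}. REBUILD halts exactly when $\mathcal D_k=\emptyset$, since then every previously certified unit has re-entered $\mathscr P$.

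First I check that $\mathcal D_0$ is a finite multiset of positive integers. The initiating BUILD step is a finite genuine continuation, so it spends only finitely many certified copies. By Lemmas~\ref{lem:general-p-push-debt-reduction} and~\ref{lem:general-p-power-debt-reduction}, each such spend contributes one unit debt, or at most $p-1$ unit debts at a POWER parent.

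Next, while $\mathcal D_k\neq\emptyset$, I select any outstanding unit debt and perform its completed repayment. By Lemma~\ref{lem:general-p-rebuild-complexity-decrease}, this removes the selected entry $n$ and adds only finitely many entries, all strictly smaller than $n$. Hence $\mathcal D_{k+1}<_{\mathrm{mul}}\mathcal D_k$, and each $\mathcal D_{k+1}$ is again finite. An infinite run of the recursion would therefore give an infinite strictly descending chain, contradicting Lemma~\ref{lem:general-p-debt-well-founded}. So after finitely many repayment steps $\mathcal D_k=\emptyset$.

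The main obstacle is nesting: a single "completed repayment step" may itself invoke lower-weight reconstructions, FILLs and POWERs. I must make sure that each individual step is finite, so that the outer descent is over genuinely finite steps. For this I will appeal to Theorem~\ref{thm:general-p-simultaneous-reconstruction-fill-power}, which supplies these subroutines as finite genuine chronological words. Any debts they generate are, by the same lemma, of weight strictly less than $n$ and are simply added to the multiset rather than resolved inside the step. Alternatively, I can run a strong induction on the maximum entry of $\mathcal D$. Either way, the total number of recursive repayment steps is finite.
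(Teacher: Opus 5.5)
Your proposal is correct and is essentially the paper's proof: Lemma~\ref{lem:general-p-rebuild-complexity-decrease} gives a strict $<_{\mathrm{mul}}$-descent of the debt multiset at each completed repayment, and Lemma~\ref{lem:general-p-debt-well-founded} rules out an infinite descending chain. Your additional finiteness checks, on $\mathcal D_0$ and on each nested repayment step, are ones the paper leaves implicit; finiteness of $\mathcal D$ is in fact automatic, since each $\delta_i\le p-1$ gives at most $(p-1)d$ entries.
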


\begin{proof}
By Lemma~\ref{lem:general-p-rebuild-complexity-decrease}, each completed
repayment of a selected outstanding unit debt strictly decreases the
finite multiset of outstanding routing-basis occupancy debts in the
well-founded multiset ordering. An infinite REBUILD sequence would
therefore give an infinite strictly descending multiset chain,
contradicting Lemma~\ref{lem:general-p-debt-well-founded}. Hence REBUILD
terminates after finitely many completed debt-repayment steps.
\end{proof}

We have therefore established the two facts needed for REBUILD. Each repayment step preserves the data required by the protected REBUILD invariant, and each completed repayment strictly decreases a well-founded debt measure. Consequently every finite collection of occupancy debts created by BUILD can be eliminated after finitely many recursive restoration steps. The remaining task is to package this terminating restoration process with the initial growth operation. This gives the completed BUILD REBUILD move: BUILD creates new protected occupancy, and REBUILD restores any old occupancy spent in obtaining it.


\subsection{BUILD REBUILD reconstruction}
\label{subsec:build-rebuild-reconstruction}

We now combine the local debt-repayment mechanism into a complete chronological restoration theorem. The first result shows that REBUILD restores all occupancy spent by a preceding protected growth construction while retaining the progress produced by that construction. The BUILD REBUILD theorem then packages the initial growth and the finite restoration into a single move with positive net occupancy progress.

\begin{theorem}
\label{thm:general-p-rebuild}
Let $\mathbf C$ be a completed certified \(p\)-PBW state. Suppose a finite protected PUSH or POWER growth construction produces new protected progress and yields a post-growth state
$\mathbf C_1$, but temporarily spends one or more previously certified copies. Then there exists a finite genuine chronological continuation
$\mathbf C_1 \rightsquigarrow \mathbf C'$
such that

\begin{enumerate}
\item every previously certified multiplicity is restored: $\varepsilon_{\mathbf C'}(v) \geq \varepsilon_{\mathbf C}(v)$ for every direction \(v\) certified in \(\mathbf C\);
\item every new protected direction or multiplicity created by the preceding growth step remains certified;
\item every intermediate reconstruction stage satisfies the protected REBUILD invariant.
\end{enumerate}
\end{theorem}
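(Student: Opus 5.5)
The plan is to argue by well-founded induction on the outstanding debt multiset \(\mathcal D\) of Definition~\ref{def:general-p-debt-multiset}, ordered by \(<_{\mathrm{mul}}\) as in Definition~\ref{def:general-p-debt-ordering}. After the growth step we have the state \(\mathbf C_1\). I define \(\mathscr P\) as in Definition~\ref{def:general-p-rebuild-invariant}: it contains the new protected data from the growth step, together with every old unit not spent. Each spent copy is recorded as one unit debt. The protection lemmas underlying the PUSH and POWER growth step (Theorem~\ref{thm:general-p-protected-push}, Corollary~\ref{thm:general-p-certified-power}) show that \(\mathbf C_1\) certifies all of \(\mathscr P\). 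So the protected REBUILD invariant holds at the start. If \(\mathcal D=\emptyset\), take \(\mathbf C'=\mathbf C_1\): assertions (1)--(3) are then immediate, since \(\varepsilon_{\mathbf C_1}(v)=\varepsilon_{\mathbf C}(v)-\delta_v\) and every \(\delta_v=0\).

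For the inductive step, select an outstanding unit debt of \emph{maximal} weight \(n\) at some direction \(b\). Choosing a maximal entry makes the multiset decrease transparent and prevents higher-weight debts from reappearing. I then split according to the final stored provenance edge of \(b\), using Lemma~\ref{lem:general-p-backward-weight-descent}.
\begin{enumerate}
\item If \(n=1\), apply the protected CREATE of Theorem~\ref{thm:general-p-protected-create}, or its FILL analogue, Corollary~\ref{thm:general-p-certified-fill}, when \(b\) still has positive current occupancy. This restores the copy without spending anything.
\item If the edge is a PUSH, apply Lemma~\ref{lem:general-p-push-debt-reduction}.
\item If the edge is a POWER, apply Lemma~\ref{lem:general-p-power-debt-reduction}. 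This first performs the lower-weight reconstruction and FILL steps supplied by Theorem~\ref{thm:general-p-simultaneous-reconstruction-fill-power}.
\end{enumerate}
In each case the child-room hypothesis (\(\nu^{\mathrm{cur}}_b\le p-2\)) holds because \(b\) carries a debt. By Lemma~\ref{lem:general-p-rebuild-complexity-decrease}, the resulting multiset \(\mathcal D_{\mathrm{new}}\) satisfies \(\mathcal D_{\mathrm{new}}<_{\mathrm{mul}}\mathcal D\). We then invoke the induction hypothesis, which is legitimate by Lemma~\ref{lem:general-p-debt-well-founded} (equivalently, the process terminates by Lemma~\ref{lem:general-p-rebuild-termination}).

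Each repayment step is a finite genuine chronological continuation of the current left-normed commutator. Appending Lie brackets to a left-normed commutator yields a left-normed commutator, so the concatenation of all these steps is again a single finite genuine chronological continuation \(\mathbf C_1\rightsquigarrow\mathbf C'\). At termination \(\mathcal D=\emptyset\). Hence every old unit is back in \(\mathscr P\), which gives (1), and the new data never left \(\mathscr P\), which gives (2).

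The main obstacle is assertion (3): each individual repayment must preserve certificates for all of \(\mathscr P\), and not merely for the direction being repaired. In particular, the PUSH and POWER inside a repayment may create competing Leibniz and normalization terms that threaten the new growth coordinate or an unrelated old coordinate. I would handle this as follows. At each repayment, let the protected span \(U\) consist of the directions carried by \(\mathscr P\), excluding exactly the one copy of the parent deliberately spent, which is immediately recorded as debt. Then apply Lemma~\ref{lem:general-p-protected-factor-through} together with Lemma~\ref{lem:general-p-protected-noncancellation}. These show that competitors either change a protected exponent profile or lose occupancy through forced normalization, so the coefficient of the distinguished monomial \(M(\mathscr P)\) stays in \(\F_p^\times\). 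A secondary subtlety is that the recursive reconstruction inside a POWER repayment may itself spend protected units. Since this occurs at weight \(<n\), any such spent unit is recorded as a debt of weight \(<n\), which is consistent with the multiset decrease already established.
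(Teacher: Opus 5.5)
Your skeleton coincides with the paper's proof: well-founded induction on $\mathcal D$, with primitive CREATE at weight one and Lemmas~\ref{lem:general-p-push-debt-reduction} and~\ref{lem:general-p-power-debt-reduction} at PUSH and POWER edges. Using Corollary~\ref{thm:general-p-certified-fill} for a weight-one debt at a direction of positive current occupancy is a small improvement, since Theorem~\ref{thm:general-p-protected-create} assumes $\nu_j=0$.

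Your one real departure, always repaying a debt of \emph{maximal} weight, makes the PUSH case fail. Lemma~\ref{lem:general-p-push-debt-reduction} runs through Theorem~\ref{thm:general-p-protected-push}, which requires the source to have positive current occupancy. The branch $a^r\rightsquigarrow ra^{r-1}b$ is zero when $r=0$, and you check only the child-room condition. Here is a case where this bites:
\begin{itemize}
\item Take $p\geq3$ and a POWER growth along a stored edge $b_i\mapsto b_i^{[p]}$, with $\nu_{b_i}=p-1$ and $\nu_{b_i^{[p]}}\leq p-2$.
\item Suppose the stored edge into $b_i$ is a PUSH edge $a\mapsto b_i=D_g(a)$, and $\nu_a=1$.
\item The growth leaves $p-1\geq2$ unit debts at $b_i$, all of weight $m=\wt(b_i)$.
\item The first repayment pushes the only copy of $a$. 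This leaves $\mathcal D=\{m^{(p-2)},\,m-1\}$, with $p-2$ copies of $m$.
\item Your rule then selects $b_i$ again. But now $\nu^{\mathrm{cur}}_a=0$, so there is no PUSH from $a$ to perform, and nothing in your argument reconstructs $a$ first.
\end{itemize}

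The maximal rule buys nothing, since Lemma~\ref{lem:general-p-rebuild-complexity-decrease} gives $\mathcal D_{\mathrm{new}}<_{\mathrm{mul}}\mathcal D$ whichever debt is selected. Select a debt of \emph{minimal} weight instead. Then the parent of the selected direction carries no outstanding debt, so its current occupancy is at least its occupancy in $\mathbf C$. In the example above, the debt at $a$ is repaid first, which restores $a$ before the second PUSH. Finishing the whole cascade generated by one debt before starting the next has the same effect.

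One case remains even with this fix. If the parent is already unoccupied in $\mathbf C$, a copy of it must be reconstructed before pushing. Neither your proposal nor the paper's proof, which selects an arbitrary debt, addresses that case.
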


\begin{proof}
Form the finite multiset $\mathcal D$ of outstanding unit occupancy debts and argue by well-founded induction on $\mathcal D$. If $\mathcal D=\varnothing$, there is nothing to reconstruct.
Otherwise choose an outstanding unit debt at $b\in E_n$. If $n=1$, protected primitive CREATE removes the debt while preserving every datum in the protected REBUILD invariant. If $n>1$, follow the last stored provenance edge of \(b\). For a PUSH edg $b=D_g(a)$, Lemma~\ref{lem:general-p-push-debt-reduction} reconstructs the required copy of \(b\), while preserving every datum in the protected REBUILD invariant. If the reconstruction spends a previously protected source copy of \(a\), it creates only subordinate debt of strictly smaller weight. For a POWER edge
$b=a^{[p]}$, Lemma~\ref{lem:general-p-power-debt-reduction} reconstructs the required copy of \(b\) by lower-weight reconstruction, successive FILL, and POWER, while preserving every datum in the protected REBUILD invariant. Any subordinate occupancy debts created in this reconstruction have strictly smaller weight. Thus every recursive step preserves the protected REBUILD invariant
and strictly decreases $\mathcal D$ in the multiset ordering. By Lemma~\ref{lem:general-p-rebuild-termination}, the process terminates after finitely many steps with $\mathcal D=\varnothing$.
Hence every multiplicity certified in \(\mathbf C\) is restored, while every new protected direction or multiplicity produced before REBUILD remains certified throughout. Therefore the resulting state \(\mathbf C'\) satisfies $\varepsilon_{\mathbf C'}(v) \geq \varepsilon_{\mathbf C}(v)$ for every direction \(v\) certified in \(\mathbf C\), and every intermediate reconstruction stage satisfies the protected REBUILD invariant.
\end{proof}

\begin{theorem}
\label{thm:general-p-build-rebuild-reconstruction}
Let $\nu=(\nu_1,\ldots,\nu_d)$ be a completed certified \(p\)-PBW state. Suppose a protected initial growth operation of type PUSH or POWER produces a new certified occupancy unit, possibly while spending previously certified occupancy. Then there exists a finite genuine chronological continuation to a completed certified state $\nu'=(\nu_1',\ldots,\nu_d')$ such that
$\nu_i'\geq\nu_i$ $(1\leq i\leq d)$, while the new occupancy produced by the initial growth operation remains certified. Consequently the completed BUILD REBUILD move has strictly positive net occupancy progress whenever its initial growth step reaches an incompletely certified target occupancy.
\end{theorem}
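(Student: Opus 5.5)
The plan is to obtain the theorem by composing two results already in hand: the protected growth step, and the restoration result Theorem~\ref{thm:general-p-rebuild}. The proof then reduces to bookkeeping, checking that the new unit is not absorbed into the restored occupancies. Let \(b_j\) be the target direction of the initial growth operation. The phrase ``incompletely certified target occupancy'' will be read as \(\nu_j\leq p-2\), which is exactly the child-room hypothesis needed in the two cases.

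\textbf{Growth step.} There are two cases.
\begin{enumerate}
\item PUSH edge \(b_j=D_g(b_i)\) with \(\nu_i>0\). Apply Theorem~\ref{thm:general-p-protected-push}. This gives a genuine chronological continuation whose output certifies the state
\[
\nu^{(1)}=\nu-e_i+e_j
\]
with nonzero coefficient \(\lambda\nu_i\). It creates exactly one unit debt, of weight \(\wt(b_i)\), in the sense of Definition~\ref{def:general-p-unit-occupancy-debt}.
\item POWER edge \(b_j=b_i^{[p]}\). First use Corollary~\ref{cor:general-p-successive-fill} to bring \(b_i\) to occupancy \(p-1\), if needed. Then apply Proposition~\ref{prop:general-p-fill-before-power} together with Theorem~\ref{thm:general-p-certified-power}. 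This certifies a state with \(\nu^{(1)}_j=\nu_j+1\) and \(\nu^{(1)}_i=0\). The \(\nu_i\) originally certified copies of \(b_i\) become unit debts, at most \(p-1\) of them, each of weight \(\wt(b_i)<\wt(b_j)\). The copies added by FILL create no debt.
\end{enumerate}
In both cases the new unit at \(b_j\) is placed in part (1) of the protected set \(\mathscr P\) of Definition~\ref{def:general-p-rebuild-invariant}. All untouched old units are placed in part (2). This gives the post-growth state \(\mathbf C_1\) required by the hypotheses of Theorem~\ref{thm:general-p-rebuild}.

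\textbf{Restoration and concatenation.} Applying Theorem~\ref{thm:general-p-rebuild} to \(\mathbf C_1\) yields a finite genuine chronological continuation \(\mathbf C_1\rightsquigarrow\mathbf C'\) with the following properties:
\begin{itemize}
\item \(\nu_i'\geq\nu_i\) for every \(i\);
\item the \(b_j\)-unit from part (1) of \(\mathscr P\) remains certified throughout;
\item termination holds by Lemma~\ref{lem:general-p-rebuild-termination}.
\end{itemize}
Concatenating the growth word with the REBUILD word gives a single genuine chronological word. The concatenation of genuine left-normed histories is again a left-normed history, since we simply append further brackets.

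\textbf{Net progress.} Since the new unit at \(b_j\) is tracked as a separate element of \(\mathscr P\), distinct from the \(\nu_j\) old units at \(b_j\), the final state satisfies
\[
\nu_j'\geq\nu_j+1 .
\]
Hence \(|\nu'|_{\mathrm{occ}}\geq|\nu|_{\mathrm{occ}}+1\), and by Definition~\ref{def:general-p-local-deficit}, \(\Delta_p(\nu')\leq\Delta_p(\nu)-1\).

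\textbf{Main obstacle.} The hardest step is justifying this separation: that REBUILD cannot certify the old \(b_j\)-copies and the new copy as the same units. The danger is concrete. Repaying a debt at some ancestor of \(b_j\) may route through PUSH or POWER into \(b_j\) itself, or it may push \(b_j\) past \(p-1\) and trigger a normalization that consumes the new unit.

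The first thing to try is to note that all debts created by the growth step have weight strictly below \(\wt(b_j)\). By Lemma~\ref{lem:general-p-rebuild-complexity-decrease}, all subordinate debts also have weight strictly below \(\wt(b_j)\). Repaying a debt of weight \(n\) only reconstructs the debted direction, which has weight \(n\), so no repayment ever targets the direction \(b_j\). Consequently the \(b_j\)-coordinate is touched only through protected factor-through separation, namely Lemma~\ref{lem:general-p-protected-factor-through} and Lemma~\ref{lem:general-p-protected-noncancellation}, with \(b_j\) included in the protected span \(U\) at every stage.

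One sub-case may break this argument and should be checked separately: when \(b_j\) itself lies on the stored ancestry of a debted direction.
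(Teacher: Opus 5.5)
Your proposal is correct and takes essentially the same route as the paper. The paper performs the protected PUSH or POWER growth step, records the spent copies as unit debts, and invokes Theorem~\ref{thm:general-p-rebuild}, whose item (2) already gives the retention of the new $b_j$-unit you were worried about; the sub-case you flag at the end is vacuous, because every debted direction has weight strictly below $\wt(b_j)$ and, by Lemma~\ref{lem:general-p-backward-weight-descent}, every vertex on its stored ancestry has weight no larger than its own, so $b_j$ never lies on such an ancestry and your weight argument already closes it.
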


\begin{proof}
Perform the protected growth step. Record every old occupancy unit thereby spent as a unit occupancy debt. Apply Theorem~\ref{thm:general-p-rebuild}. It repays every such debt in finitely many steps while retaining the new protected occupancy created by the initial growth step. Hence the final completed state contains every previously certified occupancy together with the new protected occupancy.
\end{proof}

\begin{lemma}
\label{lem:general-p-ancestry-admissibility-preserved}
Let \(\nu\) be an ancestry-admissible completed certified state. Every completed certified state obtained from \(\nu\) by one of the protected constructions CREATE, FILL, PUSH, POWER, or REBUILD is again ancestry-admissible.
\end{lemma}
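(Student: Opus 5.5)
The statement is essentially definitional, so I would prove it by unwinding Definition~\ref{def:general-p-ancestry-admissible-state} rather than by analysing the individual constructions in detail. By that definition, a completed certified restricted PBW state is ancestry-admissible exactly when it is taken relative to the fixed routing basis $\mathcal B_{\mathrm{rt}}$. Once this holds, every occupied direction $b_j$ automatically carries the stored provenance of Definition~\ref{def:general-p-stored-provenance}. It therefore suffices to show two things about each of the operations CREATE, FILL, PUSH, POWER and REBUILD: its output is a completed certified state, and its PBW coordinates are computed relative to $\mathcal B_{\mathrm{rt}}$.

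I would treat the five constructions in turn, citing the result that produces each output state. For CREATE, Theorem~\ref{thm:general-p-protected-create} gives a state $\nu'$ whose certifying coefficient $[M(\nu)b_j]_{\mathcal B_{\mathrm{rt}}}X'$ is explicitly computed in $\mathcal B_{\mathrm{rt}}$. For FILL and POWER, Corollaries~\ref{thm:general-p-certified-fill} and \ref{thm:general-p-certified-power} are stated for states relative to $\mathcal B_{\mathrm{rt}}$. For PUSH, Theorem~\ref{thm:general-p-protected-push} certifies the distinguished branch through the coordinate $[\,\cdot\,]_{\mathcal B_{\mathrm{rt}}}$. REBUILD (Theorem~\ref{thm:general-p-rebuild}) and the completed BUILD REBUILD move (Theorem~\ref{thm:general-p-build-rebuild-reconstruction}) are finite compositions of these moves, so for them I would argue by induction on the number of elementary steps, which is finite by Lemma~\ref{lem:general-p-rebuild-termination}.

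The one point that needs care is the auxiliary relative descendant separator $\Omega^{(p)}_{S,U}$ of Definition~\ref{def:general-p-saturated-descendant-factor}. It is built from a different homogeneous basis of a complement $T$, so one might worry that it changes the basis in which states are recorded. I would dispose of this using the explicit clause in that definition, together with Remark~\ref{conv:general-p-fixed-routing-pbw-basis}: the auxiliary basis only serves to detect coefficients, and no change of complementary PBW basis is ever made, so every certified state is still recorded relative to $\mathcal B_{\mathrm{rt}}$. A POWER normalization $b_i^p=b_i^{[p]}=b_j$ also stays inside $\mathcal B_{\mathrm{rt}}$, because $b_j$ is itself a basis element with a stored POWER edge by Lemma~\ref{lem:general-p-fixed-routing-basis}. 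Every newly occupied direction is therefore a routing-basis direction, and so carries stored provenance. With these observations in place, the conclusion follows directly from Definition~\ref{def:general-p-ancestry-admissible-state}.
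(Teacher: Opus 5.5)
Your proposal is correct and follows essentially the paper's route: a case-by-case check over CREATE, FILL, PUSH, POWER and REBUILD showing that every resulting state is recorded in the fixed routing basis, so that every occupied direction is a routing-basis direction carrying its stored provenance. Your explicit reduction to the final clause of Definition~\ref{def:general-p-ancestry-admissible-state}, combined with Remark~\ref{conv:general-p-fixed-routing-pbw-basis}, makes the essentially definitional character of the lemma clearer than the paper's proof does; note also that you need not re-verify that the outputs are completed certified, since the statement already assumes this.
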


\begin{proof}
FILL changes only the occupancy of an already certified routing-basis direction, so its stored provenance is unchanged. CREATE introduces a primitive routing-basis direction in \(E_1\), whose stored ancestry has length zero. PUSH and POWER follow stored edges of the fixed routing basis, so their target directions already carry the provenance fixed in Definition~\ref{def:general-p-stored-provenance}. Finally, REBUILD restores previously certified routing-basis directions using their already stored provenance. Hence every resulting completed certified state is ancestry-admissible.
\end{proof}

The BUILD REBUILD mechanism is now complete. A protected PUSH or POWER growth step can create new certified occupancy even when doing so temporarily spends old occupancy; REBUILD records those losses as debts and restores them in finitely many steps, without sacrificing the new protected progress. The completed move therefore preserves every previously certified occupancy while retaining the new occupancy created by BUILD. Moreover, ancestry-admissibility is preserved throughout, so the same mechanism remains available for subsequent growth steps. This is the local progress mechanism needed for PBW reachability. In the next section it will be applied to a nonterminal certified state: FILL handles a deficient direction which is already occupied, while an unoccupied direction is reached through its stored ancestry by the appropriate CREATE, PUSH, or POWER construction, with BUILD REBUILD restoring any occupancy spent along the way.


\section{PBW Reachability}
\label{sec:pbw-reachability}

The preceding section established the local BUILD REBUILD mechanism:
from protected growth along the fixed routing basis, any previously
certified occupancy spent in producing new progress can be restored
while the new occupancy remains certified. We now apply that mechanism
to an arbitrary nonterminal completed certified state.

The argument is an exhaustion of the possible deficient occupancies.
An occupied deficient direction is handled by FILL, an unoccupied
primitive direction by CREATE, and an unoccupied nonprimitive direction
is reached along its stored ancestry by PUSH or POWER. In the latter
case BUILD REBUILD restores any occupancy spent in producing the new
direction. Thus in every case all previous certified occupancy is
preserved while the total certified occupancy strictly increases.


\begin{theorem}
\label{thm:pbw-reachability}
There exists a fixed ordered homogeneous basis \(\mathcal C=(c_1,\ldots,c_d)\) of \(E\) such that, whenever \(X\in u(E)\) is the complete graded output of one genuine chronological
routing history and \(\nu\) is a nonterminal restricted PBW state completed certified in \(X\) relative to \(\mathcal C\), there exists a finite genuine chronological continuation from \(X\) to a complete graded output \(X'\) in which a restricted PBW state \(\nu'\) is completed certified, every previously certified occupancy is preserved, and $\Delta_p(\nu')\leq\Delta_p(\nu)-1$.
\end{theorem}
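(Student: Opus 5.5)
We take \(\mathcal C:=\mathcal B_{\mathrm{rt}}=(b_1,\ldots,b_d)\), the fixed routing basis of Lemma~\ref{lem:general-p-fixed-routing-basis} with its stored provenance (Definition~\ref{def:general-p-stored-provenance}). By Remark~\ref{conv:general-p-fixed-routing-pbw-basis} and Definition~\ref{def:general-p-ancestry-admissible-state}, every completed certified state relative to \(\mathcal C\) is then ancestry-admissible. This is exactly the hypothesis under which Corollaries~\ref{thm:general-p-certified-fill} and~\ref{thm:general-p-certified-power} and Theorem~\ref{thm:general-p-build-rebuild-reconstruction} apply. Given a nonterminal certified \(\nu\), Proposition~\ref{prop:general-p-terminal-characterization} gives an index \(i\) with \(\nu_i\le p-2\). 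The plan is a case split on the chosen deficient direction \(b_i\). In every case we aim for a continuation with \(\nu'_j\ge\nu_j\) for all \(j\) and \(\nu'_i\ge\nu_i+1\). The deficit bound then follows at once from \(\Delta_p=(p-1)d-|\nu|_{\mathrm{occ}}\), exactly as in Corollaries~\ref{cor:general-p-create-deficit} and~\ref{cor:general-p-fill-progress}.

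\emph{Case 1: \(1\le\nu_i\le p-2\).} Apply Certified FILL, Corollary~\ref{thm:general-p-certified-fill}, directly; Corollary~\ref{cor:general-p-fill-progress} gives the deficit drop.

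\emph{Case 2: \(\nu_i=0\) and \(b_i\in E_1\).} Here \(b_i\) is an available primitive CREATE direction, and Theorem~\ref{thm:general-p-protected-create} with Corollary~\ref{cor:general-p-create-deficit} finishes.

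\emph{Case 3: \(\nu_i=0\) and \(b_i\notin E_1\).} Let \(b_k\to b_i\) be the last stored provenance edge. First make the parent available at the required occupancy. If \(\nu_k=0\), we obtain it by recursion on the parent: by Corollary~\ref{cor:general-p-finite-backward-provenance} the backward chain terminates in \(E_1\), so we can induct on \(\wt(b_i)\). Concretely, apply the present case analysis to \(b_k\), which has strictly smaller weight, to reach a certified state dominating \(\nu\) with \(\nu_k\ge1\). For a POWER edge, we then raise \(\nu_k\) to \(p-1\) by Corollary~\ref{cor:general-p-successive-fill}. Next perform the growth step, Theorem~\ref{thm:general-p-protected-push} for a PUSH edge or Corollary~\ref{thm:general-p-certified-power} for a POWER edge (Proposition~\ref{prop:general-p-fill-before-power}); the child-room hypothesis \(\nu_i\le p-2\) holds since \(\nu_i=0\). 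This produces a certified copy of \(b_i\) but may spend parent copies. Those copies are exactly the debts handled by Theorem~\ref{thm:general-p-build-rebuild-reconstruction}. The REBUILD phase (Theorem~\ref{thm:general-p-rebuild}, terminating by Lemma~\ref{lem:general-p-rebuild-termination}) restores every previously certified occupancy while keeping the new \(b_i\)-copy. Lemma~\ref{lem:general-p-ancestry-admissibility-preserved} keeps the state ancestry-admissible, so the recursive calls stay legitimate. Concatenating all the continuations gives one genuine chronological history, since each step appends actual Lie brackets to the current left-normed commutator (Lemmas~\ref{lem:general-p-carrier-bracket} and~\ref{lem:general-p-carrier-freedom}).

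The step I expect to be the main obstacle is Case 3, specifically checking that the recursion is well founded when interleaved with REBUILD. Making the parent available may itself invoke BUILD REBUILD at lower weight. We must also check that the new \(b_i\)-copy stays in the protected set \(\mathscr P\) of Definition~\ref{def:general-p-rebuild-invariant} throughout the nested repayments: a repayment pushing or powering into \(b_i\) must never consume it. I would handle this with a single combined measure. First order lexicographically by the weight of the currently targeted new direction, and then by the debt multiset \(\mathcal D\) of Definition~\ref{def:general-p-debt-multiset}. Lemma~\ref{lem:general-p-rebuild-complexity-decrease} and Lemma~\ref{lem:general-p-backward-weight-descent} should show that every subcall strictly decreases this measure. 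The delicate point to verify is noncancellation of the distinguished branch across the concatenation. This requires rechecking the hypotheses of Lemma~\ref{lem:general-p-protected-noncancellation} at each appended bracket, since the carrier term of Lemma~\ref{lem:general-p-carrier-bracket} contributes an extra competing branch alongside each PUSH.
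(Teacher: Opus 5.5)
Your argument is sound relative to the paper's machinery, and your Cases~1 and~2 are the paper's. Case~3 takes a more laborious route.

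You insist that the chosen direction $b_i$ itself gains a unit. So when its stored parent $b_k$ is unoccupied, you must recurse on $b_k$ and then stack a second growth step and a second BUILD REBUILD on top of the recursive call. The paper only needs \emph{some} deficient coordinate to gain. It writes the stored ancestry $y_0\mapsto\cdots\mapsto y_s=b_j$ and applies CREATE if $y_0$ is unoccupied. Otherwise it acts at the least $k$ with $y_k$ unoccupied; by minimality $y_{k-1}$ is occupied. Then a single protected PUSH, or Corollary~\ref{cor:general-p-successive-fill} followed by Corollary~\ref{thm:general-p-certified-power}, plus one application of Theorem~\ref{thm:general-p-build-rebuild-reconstruction}, suffices, with no nesting.

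In fact your own recursive call already finishes the proof. It returns a certified state $\nu'\ge\nu$ with $\nu'_k\ge 1=\nu_k+1$, so $\Delta_p(\nu')\le\Delta_p(\nu)-1$, and the subsequent step into $b_i$ is superfluous for this theorem.

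What your version buys is a stronger, targeted statement: any prescribed deficient coordinate can be incremented. Its well-foundedness needs no lexicographic measure combining target weight with the debt multiset. Plain induction on $\wt(b_i)$ suffices, quantified over all certified states and treating Theorem~\ref{thm:general-p-build-rebuild-reconstruction} as a black box. This works because each recursive call returns a completed certified state before the next growth step begins. There is also a small slip: after the recursion you should check the child-room condition against the current occupancy of $b_i$, not the original $\nu_i=0$; if it has already risen, you are done.

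Finally, the paper's proof of this theorem does not re-examine the noncancellation issue you flag, namely the carrier term of Lemma~\ref{lem:general-p-carrier-bracket} competing with each PUSH. It is delegated entirely to Theorem~\ref{thm:general-p-protected-push} and the protection lemmas, so both arguments stand or fall with those cited results.
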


\begin{proof}
\label{proof:pbw-reachability}
Since \(\nu\) is nonterminal, choose \(b_j\) with \(\nu_j<p-1\). If
\(\nu_j>0\), Corollary~\ref{thm:general-p-certified-fill} gives the
required continuation. If \(\nu_j=0\) and \(b_j\in E_1\), protected
CREATE does so. Suppose therefore that \(\nu_j=0\) and
\(b_j\notin E_1\). Write the stored ancestry of \(b_j\) as
\[
    y_0\longmapsto y_1\longmapsto\cdots\longmapsto y_s=b_j,
    \qquad y_0\in E_1.
\]
If \(y_0\) is unoccupied, protected CREATE produces one certified copy
of \(y_0\), and hence gives the required occupancy progress. Otherwise,
since \(y_0\) is occupied and \(y_s=b_j\) is unoccupied, let \(k\geq1\)
be least such that \(y_k\) is unoccupied. Then \(y_{k-1}\) is occupied.
Put \(a:=y_{k-1}\) and \(b:=y_k\). If
\(a\xmapsto{\mathrm{PUSH}}b\), protected PUSH produces one certified
copy of \(b\); if \(a\xmapsto{\mathrm{POWER}}b\),
Corollary~\ref{cor:general-p-successive-fill} followed by
Corollary~\ref{thm:general-p-certified-power} does so. In either case
Theorem~\ref{thm:general-p-build-rebuild-reconstruction} restores any
previously certified occupancy spent in producing \(b\), while retaining
the new copy. Thus every old occupancy is preserved and at least one
deficient coordinate gains one certified unit. Hence
$|\nu'|_{\mathrm{occ}}\geq|\nu|_{\mathrm{occ}}+1$, and therefore
$\Delta_p(\nu')\leq\Delta_p(\nu)-1$. Ancestry-admissibility is preserved
by Lemma~\ref{lem:general-p-ancestry-admissibility-preserved}. Taking
\(\mathcal C=\mathcal B_{\mathrm{rt}}\) proves
Theorem~\ref{thm:pbw-reachability}.
\end{proof}

Thus every nonterminal completed certified state admits a finite genuine
chronological continuation which preserves all existing certified
occupancy and decreases the occupancy deficit by at least one. Since
the deficit is a nonnegative integer, repeated application of the
reachability theorem gives the finite progression toward terminal
certification required in the next section.

\section{Terminal Completion and Equality}
\label{sec:terminal-completion-equality}

The reachability theorem gives a local step: from every nonterminal completed certified state, one can make finite genuine chronological progress while preserving all previously certified occupancy. We now iterate that step to reach the terminal restricted PBW state and then convert the resulting terminal certificate into the required comparison of the lower and upper Lie nilpotency indices. There are three stages. First, the successive reachability continuations are concatenated into one genuine chronological history and iterated until the occupancy deficit is zero. Second, the resulting terminal monomial in the routing basis is compared with the terminal PBW monomial fixed earlier. This produces the required lower bound for the lower Lie nilpotency index over \(\F_p\). Finally, the reverse inequality follows from the formal containment of lower Lie powers in upper Lie powers, and scalar extension carries the resulting equality from \(\F_pG\) to \(KG\). Before iterating reachability, we record the elementary chronological fact that a finite sequence of genuine continuations can be regarded as one genuine routing history. This ensures that iteration does not leave the single-history framework used throughout the construction.

\begin{lemma}
\label{lem:general-p-concatenation-genuine-continuations}
Suppose $X_0\xmapsto{\,W_1\,}X_1\xmapsto{\,W_2\,}\cdots\xmapsto{\,W_r\,}X_r$ is a finite sequence in which every $W_j$ is a genuine chronological continuation from the actual complete state $X_{j-1}$ to the actual complete state $X_j$.  Then $W:=W_rW_{r-1}\cdots W_1$ is one finite genuine chronological routing word and $X_r=W(X_0)$. Moreover,
\[
\wt_{\mathrm{route}}(W)
=
\sum_{j=1}^{r}\wt_{\mathrm{route}}(W_j),
\]
and this is exactly the total number of genuine Lie-bracket increments in the concatenated chronological history.
\end{lemma}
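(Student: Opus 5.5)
The plan is to argue by induction on the number \(r\) of continuations, using only the meaning of ``genuine chronological'' in Definition~\ref{def:genuine-chronological}. The key observation is that a left-normed commutator extended on the right by further brackets is again left-normed. For \(r=1\) there is nothing to prove. For the inductive step, assume \(W':=W_{r-1}\cdots W_1\) is one genuine chronological routing word with \(X_{r-1}=W'(X_0)\). Then \(X_{r-1}\) is the image, in the appropriate associated-graded component, of one actual left-normed commutator
\[
C=[x_1,\ldots,x_{n+1}],\qquad n=\wt_{\mathrm{route}}(W').
\]
The hypothesis that \(W_r\) is a genuine continuation from the actual complete state \(X_{r-1}\) means the following. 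Its elementary steps (PUSH, CREATE, and the FILL/POWER reconstructions) are realized by appending brackets \(C\mapsto[C,y_1]\mapsto\cdots\mapsto[C,y_1,\ldots,y_k]\) to this same representative \(C\). This is the form in which every continuation was constructed, for instance in Lemma~\ref{lem:general-p-carrier-bracket} and Theorem~\ref{thm:general-p-protected-create}. The carrier rewriting of Lemma~\ref{lem:general-p-carrier-freedom} is a change of how \(C\) is written, not of \(C\) itself.

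Hence the resulting element is
\[
[x_1,\ldots,x_{n+1},y_1,\ldots,y_k],
\]
which is a single left-normed commutator in \(R\). Its leading graded history is the history of \(W'\) followed by that of \(W_r\). So \(W=W_rW'\) is genuine chronological, and \(X_r=W_r(X_{r-1})=W_r(W'(X_0))=W(X_0)\). The last equality is simply the definition of composition of routing words in Definition~\ref{def:chronological-routing-word}.

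For the weight statement, \(\wt_{\mathrm{route}}\) is defined as the sum of the step weights \(q_j\). Since the steps of \(W\) are the steps of \(W_1,\dots,W_r\) listed in order, the sum splits as \(\sum_j\wt_{\mathrm{route}}(W_j)\). To identify this with the number of bracket increments, I would use that each \(W_j\) has routing weight equal to its own number of genuine Lie-bracket increments. This holds by assertion (R) of Theorem~\ref{thm:general-p-simultaneous-reconstruction-fill-power} and Corollary~\ref{lem:general-p-ancestry-reconstruction} for reconstructions. For single PUSH and CREATE brackets it is immediate, since each is one bracket raising weight by one. Restricted normalization contributes no bracket and no weight. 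The increments of the concatenated history are the disjoint union of those of the pieces, so the two counts agree.

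The main obstacle is the middle step. One must justify that ``genuine continuation from \(X_{j-1}\)'' really acts on the same actual representative produced by the earlier stages, rather than on some other element with the same graded image. I would handle this by carrying the actual commutator \(C_j\) along as part of the inductive data. Then \(X_j\) is by definition the image of \(C_j\), and each \(W_{j+1}\) is applied to \(C_j\) itself. With that bookkeeping fixed, the rest of the argument is formal.
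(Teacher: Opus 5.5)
Your proposal is correct and takes essentially the same route as the paper. The paper's proof observes that each continuation starts from the actual state produced by its predecessors, so the elementary steps concatenate in chronological order and the routing weights, which equal the bracket counts, add; your induction makes this explicit by noting that appending brackets to a left-normed commutator keeps it left-normed, and by carrying the actual commutator along as inductive data. (Both arguments tacitly take $X_0=1$ when indexing the commutator length by $\wt_{\mathrm{route}}(W')$, which is the case used in Theorem~\ref{thm:general-p-terminal-routing}.)
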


\begin{proof}
Each continuation begins from the actual complete state produced by all preceding continuations.  Hence their elementary routing operations concatenate in chronological order, and inductively $X_j=W_j\cdots W_1(X_0)$.  No PBW summand from an unrelated history is used. Since routing weight is the number of genuine Lie-bracket increments, these counts add under concatenation. Hence
\[
\wt_{\mathrm{route}}(W)
=
\sum_{j=1}^{r}\wt_{\mathrm{route}}(W_j),
\]
and this sum is exactly the total number of genuine Lie-bracket increments in the concatenated chronological history.
\end{proof}

We can now iterate the local reachability theorem without losing chronological genuineness. At each nonterminal stage the occupancy deficit decreases by at least one, while all previously certified occupancy is retained. Since the deficit is a nonnegative integer bounded by \((p-1)d\), only finitely many such stages are possible.

\begin{theorem}
\label{thm:general-p-global-pbw-completion}
Let \(\mathcal C=(c_1,\ldots,c_d)\) be a fixed ordered homogeneous basis satisfying Theorem~\ref{thm:pbw-reachability}, and let $\nu^{(0)}$ be a completed certified restricted PBW state relative to \(\mathcal C\). Then there exist $r\geq0$ and completed certified states $\nu^{(0)},\ldots,\nu^{(r)}$
such that every transition $\nu^{(j)}\rightsquigarrow\nu^{(j+1)}$ is a finite genuine chronological continuation, every previously certified occupancy is preserved, and
$\Delta_p(\nu^{(0)}) > \Delta_p(\nu^{(1)}) > \cdots > \Delta_p(\nu^{(r)}) = 0$. Moreover, $r\leq\Delta_p(\nu^{(0)})\leq(p-1)d$, and \(\nu^{(r)}\) is terminal. Thus terminal completion requires at most $\Delta_p(\nu^{(0)})\leq(p-1)d$ finite genuine chronological continuation stages.
\end{theorem}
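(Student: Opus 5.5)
The plan is a straightforward iteration of Theorem~\ref{thm:pbw-reachability}, organised as an induction on the occupancy deficit $\Delta_p(\nu^{(0)})$, which by Definition~\ref{def:general-p-local-deficit} is a nonnegative integer at most $(p-1)d$.

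I will construct the sequence recursively. Put $\nu^{(0)}$ as given, certified in the complete graded output $X_0$ of a genuine chronological history. Suppose $\nu^{(j)}$ has been constructed and is completed certified in $X_j$.
\begin{itemize}
\item If $\Delta_p(\nu^{(j)})=0$, then $\nu^{(j)}$ is terminal by Proposition~\ref{prop:general-p-terminal-characterization}, and we stop with $r=j$.
\item Otherwise $\nu^{(j)}$ is nonterminal by the same proposition. Theorem~\ref{thm:pbw-reachability}, applied with the fixed basis $\mathcal C$, then gives a finite genuine continuation $W_{j+1}$ from $X_j$ to $X_{j+1}$. In $X_{j+1}$ a state $\nu^{(j+1)}$ is completed certified, every previously certified occupancy is preserved, and $\Delta_p(\nu^{(j+1)})\le \Delta_p(\nu^{(j)})-1$.
\end{itemize}

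The main point to check is that the recursion may legitimately be reapplied. The hypothesis of Theorem~\ref{thm:pbw-reachability} requires $X_{j+1}$ to be the complete output of one genuine chronological history. This follows from Lemma~\ref{lem:general-p-concatenation-genuine-continuations}: the composite $W_{j+1}\cdots W_1$, preceded by the history producing $X_0$, is a single genuine routing word. Ancestry-admissibility, relative to $\mathcal C=\mathcal B_{\mathrm{rt}}$, is preserved by Lemma~\ref{lem:general-p-ancestry-admissibility-preserved}. Preservation of certified occupancy is transitive along the chain, since each step satisfies $\nu^{(j+1)}_i\ge\nu^{(j)}_i$ componentwise.

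Termination and the bound on $r$ come from the strict decrease of the deficit. The integers $\Delta_p(\nu^{(j)})$ strictly decrease by at least one per stage and remain $\ge 0$, so the recursion must hit $\Delta_p=0$. The number of stages therefore satisfies $r\le\Delta_p(\nu^{(0)})\le(p-1)d$. More precisely, induction on $j$ gives $\Delta_p(\nu^{(j)})\le\Delta_p(\nu^{(0)})-j$, which forces $j\le\Delta_p(\nu^{(0)})$. The final state $\nu^{(r)}$ is terminal by Proposition~\ref{prop:general-p-terminal-characterization}.

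I expect no genuine obstacle here. The only delicate point is the bookkeeping above, namely that each intermediate $X_j$ qualifies as the output of one genuine history so that reachability can be reinvoked. That is exactly what the concatenation lemma supplies.
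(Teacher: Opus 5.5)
Your proposal is correct and follows the paper's proof. Both iterate Theorem~\ref{thm:pbw-reachability}, use Proposition~\ref{prop:general-p-terminal-characterization} to decide terminality, and bound $r$ by the strict decrease of the nonnegative integer $\Delta_p$. Your appeal to Lemma~\ref{lem:general-p-concatenation-genuine-continuations}, to justify reapplying reachability at each stage, makes explicit a point the paper leaves implicit here and only uses later, in Theorem~\ref{thm:general-p-terminal-routing}. The detour through $\mathcal C=\mathcal B_{\mathrm{rt}}$ and ancestry-admissibility is harmless but unnecessary, since the only hypothesis on $\mathcal C$ is the reachability property itself.
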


\begin{proof}
If $\Delta_p(\nu^{(j)})>0$, Proposition~\ref{prop:general-p-terminal-characterization} shows that $\nu^{(j)}$ is nonterminal. Theorem~\ref{thm:pbw-reachability} therefore gives a finite genuine chronological continuation to a completed certified state $\nu^{(j+1)}$ satisfying
$\Delta_p(\nu^{(j+1)})\leq\Delta_p(\nu^{(j)})-1$,
while preserving every previously certified occupancy. Since $\Delta_p$ is a nonnegative integer, the process terminates after at most $\Delta_p(\nu^{(0)})$ stages. In particular,
\[
    r\leq\Delta_p(\nu^{(0)})\leq(p-1)d.
\]
At the final stage $\Delta_p(\nu^{(r)})=0$, so Proposition~\ref{prop:general-p-terminal-characterization} gives that $\nu^{(r)}$ is terminal. The bound $\Delta_p(\nu^{(0)})\leq(p-1)d$ is Definition~\ref{def:general-p-local-deficit}.
\end{proof}

Global completion produces the terminal restricted PBW monomial relative to the routing basis \(\mathcal C\). The terminal PBW reduction established earlier, however, is formulated relative to the fixed basis \(\mathcal B=(e_1,\ldots,e_d)\). We must therefore compare the two terminal monomials. Only their top PBW-length components matter. That component is one-dimensional, and the change of homogeneous basis acts on it by the \((p-1)\)-st power of the determinant. Over \(\F_p\) this scalar is \(1\), so the terminal monomial is independent of the chosen ordered homogeneous basis at top PBW length.

\begin{lemma}
\label{lem:general-p-terminal-pbw-rebasing}
Let $\mathcal B=(e_1,\ldots,e_d)$ be the fixed ordered homogeneous basis of $E$, put $\Omegatopp:=e_1^{p-1}\cdots e_d^{p-1}$, and let $\mathcal C=(c_1,\ldots,c_d)$ be any ordered homogeneous basis of $E$.  If $\Omega_{\mathcal C}^{(p)}:=c_1^{p-1}\cdots c_d^{p-1}$, then $[\Omegatopp]\Omega_{\mathcal C}^{(p)}=1$. Equivalently, $\Omega_{\mathcal C}^{(p)}=\Omegatopp+$ terms of PBW length at most $(p-1)d-1$.
\end{lemma}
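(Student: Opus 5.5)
The plan is to work with the occupancy filtration $F_0A\subseteq F_1A\subseteq\cdots$ of $A=u(E)$ from Lemma~\ref{lem:general-p-occupancy-normalization}, and to reduce the statement to a determinant computation in the associated graded algebra. Put $N:=(p-1)d$. The argument has three steps.

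\emph{Step 1: PBW bases of the filtration.} First I will check that, for every ordered basis of $E$, the restricted PBW monomials of length at most $n$ form a basis of $F_nA$. They lie in $F_nA$ by definition. Conversely, since $u(E)$ is commutative ($E$ is abelian), any product of at most $n$ elements of $E$ can be expanded multilinearly in the basis and reordered. Each occurrence of $x^p$ is then replaced by $x^{[p]}\in E$, which only lowers length by Lemma~\ref{lem:general-p-occupancy-normalization}. Linear independence is the restricted PBW theorem.

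Two consequences follow. Every $F_nA$ is spanned by PBW monomials of length at most $n$ in $\mathcal B$, and $\Omegatopp\notin F_{N-1}A$. So $F_NA/F_{N-1}A$ is one-dimensional, spanned by the class of $\Omegatopp$. It is also spanned by the class of $\Omega^{(p)}_{\mathcal C}$, since Step 1 applies equally to the basis $\mathcal C$. Hence $\Omega^{(p)}_{\mathcal C}\equiv \kappa\,\Omegatopp \pmod{F_{N-1}A}$ for a unique scalar $\kappa$.

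\emph{Step 2: compute $\kappa$ in the associated graded algebra.} In $\gr A$ every $x\in E$ satisfies $\bar x^{\,p}=0$, because $x^p=x^{[p]}$ has length $1<p$. Therefore $\gr A\cong \F_p[t_1,\dots,t_d]/(t_1^p,\dots,t_d^p)$ with $t_i=\bar e_i$; this uses Frobenius additivity, $(\sum a_it_i)^p=\sum a_i t_i^p$. Write $c_j=\sum_i a_{ij}e_i$ with $A=(a_{ij})\in\GL_d(\F_p)$; homogeneity of the bases plays no role here. I claim that the map $A\mapsto\kappa(A)$, defined by $\prod_j\bar c_j^{\,p-1}=\kappa(A)\prod_i t_i^{p-1}$, is multiplicative and equals $\det(A)^{p-1}$. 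I will verify this on generators of $\GL_d(\F_p)$:
\begin{itemize}
\item permutation matrices give $\kappa=1$, since the algebra is commutative;
\item diagonal matrices $\mathrm{diag}(\lambda,1,\dots,1)$ give $\kappa=\lambda^{p-1}$;
\item a transvection $c_1=e_1+ae_2$, $c_j=e_j$ for $j\ge2$, gives $\kappa=1$. Indeed, $(t_1+at_2)^{p-1}t_2^{p-1}=t_1^{p-1}t_2^{p-1}$, because every cross term carries $t_2^{\,k}$ with $k\ge p$ and so vanishes.
\end{itemize}
Multiplicativity holds because $\kappa(A)$ records the induced scalar action of the substitution on the one-dimensional top component. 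So $\kappa=\det(A)^{p-1}=1$ by Fermat, since $\det A\in\F_p^\times$.

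\emph{Step 3: read off the coefficient.} Step 2 gives $\Omega^{(p)}_{\mathcal C}-\Omegatopp\in F_{N-1}A$. By Step 1, this difference is a combination of $\mathcal B$-PBW monomials of length at most $N-1$. The $\mathcal B$-expansion is unique, so $[\Omegatopp]\Omega^{(p)}_{\mathcal C}=1$, which is also the stated equivalent form.

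The main obstacle is Step 1, specifically the claim that $F_{N-1}A$ contains no component along $\Omegatopp$. This needs the filtration to be compatible with the PBW basis. I would settle it by exactly the argument given there: normalize by commuting and applying $x^p=x^{[p]}$, note that neither operation raises length, and then invoke uniqueness of restricted PBW expansions.
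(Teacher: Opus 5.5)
Your proposal is correct and follows the same route as the paper. Both arguments pass to the associated graded algebra of the PBW-length (occupancy) filtration, identify it with $S(E)/\langle x^p : x\in E\rangle$, use that its top component in degree $(p-1)d$ is one-dimensional and transforms by $(\det A)^{p-1}$, and conclude by Fermat. You also prove two points the paper only asserts: compatibility of the filtration with the restricted PBW basis, which is what lets you read off the coefficient in Step 3, and the $(\det A)^{p-1}$ formula, checked on generators of $\GL_d(\F_p)$.
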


\begin{proof}
Pass to the associated graded algebra for PBW length.  By restricted PBW, $\gr_{\ell}u(E)\cong S(E)/\langle x^p:x\in E\rangle$. Its top nonzero homogeneous component has degree $(p-1)d$ and is one-dimensional.  Relative to $\mathcal B$ it is spanned by $e_1^{p-1}\cdots e_d^{p-1}$, and relative to $\mathcal C$ by $c_1^{p-1}\cdots c_d^{p-1}$.  Let $A\in\GL_d(\F_p)$ be the change-of-basis matrix from $\mathcal B$ to $\mathcal C$.  On the top component the induced action is multiplication by $(\det A)^{p-1}$.  Since $\det A\in\F_p^\times$, Fermat's theorem gives $(\det A)^{p-1}=1$.  Hence the top PBW-length symbol of $\Omega_{\mathcal C}^{(p)}$ is exactly the symbol of $\Omegatopp$.
\end{proof}

We can now return to the chronological construction. Starting from the empty certificate, global completion reaches terminal occupancy in the routing basis. Concatenation makes the successive continuations into one genuine routing word, and the rebasing lemma then transfers the resulting nonzero terminal coefficient to the fixed terminal PBW direction \(\Omegatopp\).

\begin{theorem}
\label{thm:general-p-terminal-routing}
There exists one finite genuine chronological routing word $W$ such that $[\Omegatopp]W(1)\neq0$. Equivalently, $\Omegatopp=e_1^{p-1}\cdots e_d^{p-1}$ occurs with nonzero coefficient in the complete graded output of one genuine chronological routing history.
\end{theorem}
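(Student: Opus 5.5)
The plan is to start from the trivial certified state, iterate reachability up to the terminal state in the routing basis, concatenate the resulting continuations into one history, and then rebase the terminal coefficient to \(\Omegatopp\).

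\textbf{Step 1: the starting state.} Take as initial history the unit state, i.e.\ the chronological history consisting of a single entry \(x_1\in R\) with no Lie bracket yet appended. Its complete graded output is \(X_0=1\in\mathcal A_0\). Relative to the fixed routing basis \(\mathcal C=\mathcal B_{\mathrm{rt}}\), the zero state \(\nu^{(0)}=(0,\ldots,0)\) has \(M(\nu^{(0)})=1\) and \([1]_{\mathcal C}X_0=1\in\F_p^\times\). So \(\nu^{(0)}\) is completed certified, vacuously ancestry-admissible, and has \(\Delta_p(\nu^{(0)})=(p-1)d\).

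If one prefers a history with at least one genuine bracket, first apply protected CREATE (Theorem~\ref{thm:general-p-protected-create}) in some primitive direction. This only lowers the deficit.

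\textbf{Step 2: iterate to the terminal state.} Theorem~\ref{thm:pbw-reachability} holds with \(\mathcal C=\mathcal B_{\mathrm{rt}}\), so Theorem~\ref{thm:general-p-global-pbw-completion} applies. It gives finitely many genuine continuations
\[
X_0\xmapsto{W_1}X_1\xmapsto{W_2}\cdots\xmapsto{W_r}X_r,
\]
with \(r\le(p-1)d\), ending in a terminal completed certified state \(\nu^{(r)}=(p-1,\ldots,p-1)\). Thus \([\Omega^{(p)}_{\mathcal C}]_{\mathcal C}X_r=\lambda\in\F_p^\times\). Lemma~\ref{lem:general-p-concatenation-genuine-continuations} turns the chain into one genuine chronological routing word \(W=W_r\cdots W_1\) with \(W(1)=X_r\).

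\textbf{Step 3: rebasing.} This is the main obstacle, since the certificate is computed in \(\mathcal C\) while Proposition~\ref{prop:terminal-pbw-reduction} needs the coefficient in \(\mathcal B\).

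Let \(F_n\mathcal A\) be the PBW-length (occupancy) filtration of Lemma~\ref{lem:general-p-occupancy-normalization}. By restricted PBW, for either basis \(F_{(p-1)d-1}\mathcal A\) is exactly the span of the restricted PBW monomials of length \(<(p-1)d\). Moreover, \(\Omega_{\mathcal C}^{(p)}\), respectively \(\Omegatopp\), is the unique basis monomial not lying in \(F_{(p-1)d-1}\mathcal A\). Hence, for either basis, the top coefficient of \(X\) is the image of \(X\) in the one-dimensional quotient \(\mathcal A/F_{(p-1)d-1}\mathcal A\), measured against the corresponding generator.

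Lemma~\ref{lem:general-p-terminal-pbw-rebasing} gives \(\Omega^{(p)}_{\mathcal C}\equiv\Omegatopp\) modulo \(F_{(p-1)d-1}\mathcal A\), so the two generators coincide in this quotient. Therefore
\[
[\Omegatopp]X_r=[\Omega^{(p)}_{\mathcal C}]_{\mathcal C}X_r=\lambda\neq0.
\]
The point to check carefully is that the filtration pieces do not depend on the chosen homogeneous basis. They do not, because \(F_n\mathcal A\) is defined intrinsically as the span of products of at most \(n\) elements of \(E\).

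This yields the required \(W\), and Proposition~\ref{prop:terminal-pbw-reduction} then applies directly.
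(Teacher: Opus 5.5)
Your proposal is correct as a deduction from the paper's earlier results, and it follows the paper's proof step for step: start from $X_0=1$ with the zero certificate relative to $\mathcal C=\mathcal B_{\mathrm{rt}}$, apply Theorem~\ref{thm:general-p-global-pbw-completion}, concatenate via Lemma~\ref{lem:general-p-concatenation-genuine-continuations}, and rebase with Lemma~\ref{lem:general-p-terminal-pbw-rebasing}. Your remark that $F_n\mathcal A$ is defined intrinsically, so that $F_{(p-1)d-1}\mathcal A$ is the span of all non-top restricted PBW monomials in either basis, is exactly what justifies the paper's one-line claim that the lower-length terms of $X_r$ cannot contribute to the $\Omegatopp$-coefficient.
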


\begin{proof}
Begin with $X_0:=1$ and the empty completed certificate $\nu^{(0)}=(0,\ldots,0)$ relative to the fixed ordered homogeneous basis $\mathcal C=(c_1,\ldots,c_d)$ supplied by Theorem~\ref{thm:pbw-reachability}. It is certified because $[1]_{\mathcal C}1=1$. Its occupancy vector is $(0,\ldots,0)$, hence $\Delta_p(\nu^{(0)})=(p-1)d$. Apply
Theorem~\ref{thm:general-p-global-pbw-completion}. We obtain completed certified states $\nu^{(0)}\rightsquigarrow\nu^{(1)}\rightsquigarrow\cdots\rightsquigarrow\nu^{(r)}$,
$r\leq(p-1)d$, carried by complete outputs $X_0,\ldots,X_r$, with $\Delta_p(\nu^{(r)})=0$. Let $W_j$ realize the continuation from $X_{j-1}$ to $X_j$. By
Lemma~\ref{lem:general-p-concatenation-genuine-continuations}, $W:=W_r\cdots W_1$ is one genuine chronological routing word and $X_r=W(1)$. Since $\Delta_p(\nu^{(r)})=0$,
Proposition~\ref{prop:general-p-terminal-characterization} gives $\nu^{(r)}=(p-1,\ldots,p-1)$. Certification gives some $\lambda\in\F_p^\times$ such that 
$[c_1^{p-1}\cdots c_d^{p-1}]_{\mathcal C}X_r=\lambda$. There is exactly one restricted PBW monomial of PBW length $(p-1)d$ relative to $\mathcal C$, namely $c_1^{p-1}\cdots c_d^{p-1}$. Thus the top PBW-length component of $X_r$ is $\lambda c_1^{p-1}\cdots c_d^{p-1}$. By Lemma~\ref{lem:general-p-terminal-pbw-rebasing}, its $\Omegatopp$-coefficient is $\lambda$. All remaining terms have smaller PBW length and cannot contribute to $\Omegatopp$. Therefore $[\Omegatopp]W(1)=[\Omegatopp]X_r=\lambda\neq0$.
\end{proof}

Theorem~\ref{thm:general-p-terminal-routing}, together with the terminal
PBW reduction, completes the constructive part of the argument. It shows
that the lower Lie series survives to the extremal degree detected by the
upper Lie series, and therefore gives
\[
    t_L(\F_pG)\geq N_p+2=t^L(\F_pG).
\]
It remains only to prove the reverse inequality. This is formal and does
not use the routing construction: for every unital associative algebra
\(A\), the lower Lie powers are contained in the corresponding upper Lie
powers.

Indeed, for every \(n\geq1\),
\(A^{[n]}\subseteq A^{(n)}\). Every left-normed \(n\)-fold Lie
commutator belongs to \(A^{(n)}\). This follows by induction on \(n\):
the assertion is immediate for \(n=1\), and if
\([x_1,\ldots,x_n]\in A^{(n)}\), then
\[
    [x_1,\ldots,x_n,x_{n+1}]
    =
    [[x_1,\ldots,x_n],x_{n+1}]
    \in
    [A^{(n)},A]
    \subseteq
    A^{(n+1)}.
\]
Since \(A^{(n)}\) is a two-sided ideal, it therefore contains the
two-sided ideal generated by all \(n\)-fold Lie commutators, namely
\(A^{[n]}\). Consequently, whenever \(t^L(A)<\infty\),
\(t_L(A)\leq t^L(A)\): if \(m=t^L(A)\), then
\(A^{(m)}=0\), and hence \(A^{[m]}\subseteq A^{(m)}=0\).

Applying this to \(A=\F_pG\) gives \(t_L(\F_pG)\leq t^L(\F_pG)\). Together with the lower bound obtained from terminal routing, \(t_L(\F_pG)\geq t^L(\F_pG)\), the two indices therefore coincide. The previously established value of the upper index identifies their common value.

\begin{theorem}
\label{thm:main-common-p}
Let \(p\) be a prime and let \(G\) be a finite \(p\)-group. Then
\[
    t_L(\F_pG)
    =
    t^L(\F_pG)
    =
    N_p+2.
\]
\end{theorem}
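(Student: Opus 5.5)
The proof is an assembly of results already established, and I would carry it out in three steps.

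\emph{Step 1: the upper index.} I would identify \(t^L(\F_pG)\) with \(N_p+2\). This is Jennings' formula in the shifted notation of Section~\ref{sec:introduction}. With \(V_m=D_{(m+1)}\) and \(d_m=\dim_{\F_p}E_m\), the classical expression \(t^L=2+(p-1)\sum_{m\ge1}m\,d_{(m+1)}\) becomes \(t^L(R)=N_p+2\) for \(R=\F_pG\). This is the only input from outside the construction.

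\emph{Step 2: the lower bound.} Theorem~\ref{thm:general-p-terminal-routing} supplies one finite genuine chronological routing word \(W\) with \([\Omegatopp]W(1)\neq0\). Feeding this into Proposition~\ref{prop:terminal-pbw-reduction} works as follows:
\begin{itemize}
\item Lemma~\ref{lem:chronological-routing-homogeneous} forces \(\wt_{\mathrm{route}}(W)=\wt(\Omegatopp)=N_p\).
\item Genuineness (Definition~\ref{def:genuine-chronological}) realizes \(W(1)\) as the leading graded image of an actual left-normed commutator \(C_{N_p+1}=[x_1,\ldots,x_{N_p+1}]\).
\item Lemma~\ref{lem:terminal-pbw-coefficient-detection} gives \(W(1)\neq0\), hence \(C_{N_p+1}\neq0\).
\end{itemize}
Thus \(R^{[N_p+1]}\neq0\). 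Since \(t_L(R)\) is the least \(n\) with \(R^{[n]}=0\) and the lower Lie powers are decreasing, this gives \(t_L(R)\ge N_p+2=t^L(R)\).

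\emph{Step 3: the upper bound and conclusion.} I would invoke the formal containment \(A^{[n]}\subseteq A^{(n)}\) proved just before the statement, by induction on left-normed commutators together with the fact that \(A^{(n)}\) is a two-sided ideal. This yields \(t_L(\F_pG)\le t^L(\F_pG)\). Here \(t^L\) is finite because \(G\) is a finite \(p\)-group, so \(\F_pG\) is Lie nilpotent. Combining the two inequalities with Step 1 gives \(t_L(\F_pG)=t^L(\F_pG)=N_p+2\).

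\emph{Main obstacle.} The substantive difficulty is entirely inside Theorem~\ref{thm:general-p-terminal-routing}, which I take as given. If I were checking the argument rather than assembling it, I would first scrutinize the passage from \(W(1)\neq0\) in the graded object \(u(E)\) to \(C_{N_p+1}\neq0\) in \(R\). That passage requires that the leading graded image of \(C_{N_p+1}\) sits in the correct associated-graded component, so that a nonzero image really certifies a nonzero element. This is exactly what Definition~\ref{def:genuine-chronological} packages. At the level of the present statement, the remaining points are only bookkeeping: the index convention for \(t_L\) (vanishing of \(R^{[n]}\) versus the first nonzero term) and consistency of the shift \(V_m=D_{(m+1)}\) with Jennings' formula.
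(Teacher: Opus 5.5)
Your proposal is correct relative to the paper's earlier results and follows the paper's own proof essentially step for step: Theorem~\ref{thm:general-p-terminal-routing} fed through Proposition~\ref{prop:terminal-pbw-reduction} gives $t_L(\F_pG)\ge t^L(\F_pG)$, the formal containment $A^{[n]}\subseteq A^{(n)}$ gives the reverse inequality, and the upper-index formula $t^L(\F_pG)=N_p+2$ identifies the common value. As you observe, all the substantive content sits inside Theorem~\ref{thm:general-p-terminal-routing}, which the paper's proof of this statement likewise takes as already established.
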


\begin{proof}
The terminal routing construction gives $t_L(\F_pG)\geq t^L(\F_pG)$, whereas the formal containment $(\F_pG)^{[n]}\subseteq(\F_pG)^{(n)}$ $(n\geq1)$
gives $t_L(\F_pG)\leq t^L(\F_pG)$. Hence $t_L(\F_pG)=t^L(\F_pG)$. By the Jennings upper-index formula established in Section~\ref{sec:introduction},
$t^L(\F_pG)=N_p+2$, and therefore $t_L(\F_pG)=t^L(\F_pG)=N_p+2$.
\end{proof}

It remains to pass from the prime field to the finite-field setting of the main theorem. Let \(K=\F_{p^k}\), put \(R=\F_pG\), and identify \(KG\cong K\otimes_{\F_p}R\). The required passage is formal: scalar extension commutes with both the lower and upper Lie-power constructions, and faithful scalar extension preserves whether each term is zero. Consequently the first vanishing term of each Lie series is unchanged.

\begin{lemma}
\label{lem:lie-powers-scalar-extension}
For every \(n\geq1\), $(K\otimes_{\F_p}R)^{[n]}=K\otimes_{\F_p}R^{[n]}$, $(K\otimes_{\F_p}R)^{(n)}=K\otimes_{\F_p}R^{(n)}$.
\end{lemma}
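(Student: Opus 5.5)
The plan is to view $K\otimes_{\F_p}-$ as a functor on $\F_p$-subspaces of $R$ and show, by induction on $n$, that it carries each generating step of the two Lie-power constructions to the corresponding step in $A:=K\otimes_{\F_p}R=KG$. Identify $R$ with $1\otimes R\subseteq A$. Since $K$ is free over $\F_p$, the functor is exact. So for every subspace $S\subseteq R$ we have $K\otimes S=KS$, the $K$-span of $S$ inside $A$, and taking $K$-spans commutes with sums and intersections of subspaces.

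First I treat the Lie bracket, which is $K$-bilinear on $A$. For $\F_p$-subspaces $S,T\subseteq R$,
\[
[KS,KT]=K[S,T],
\]
because $[\lambda s,\mu t]=\lambda\mu[s,t]$ and sums of such brackets are exactly $K$-combinations of elements of $[S,T]$. In the same way, left and right multiplication by $A=KR$ send $KS$ to $K(RS)$ and $K(SR)$. Hence the two-sided ideal of $A$ generated by $KS$ is $K$ times the two-sided ideal of $R$ generated by $S$. Here I read ``generated'' as the $\F_p$-, respectively $K$-, linear span of the relevant brackets and products, which is what the Lie-power definitions require.

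For the upper powers, $A^{(1)}=A=KR^{(1)}$. If $A^{(n)}=KR^{(n)}$, then
\[
A^{(n+1)}=A\,[A^{(n)},A]\,A=A\,[KR^{(n)},KR]\,A=K\bigl(R[R^{(n)},R]R\bigr)=KR^{(n+1)}.
\]
For the lower powers I do not induct on the ideal $R^{[n]}$ itself. Instead I induct on the subspace $L_n$ spanned by the left-normed $n$-fold commutators, which satisfies $L_{n+1}=[L_n,R]$. The bilinearity identity gives $L_n(A)=KL_n(R)$ inductively. Taking the ideal generated then yields $A^{[n]}=KR^{[n]}$. If the paper's $R^{[n]}$ is instead defined recursively as $R^{[n+1]}=R[R^{[n]},R]R$, the same step as for the upper series applies verbatim.

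Finally, I translate each $K S$ back to $K\otimes_{\F_p}S$ via exactness (flatness) of $K\otimes_{\F_p}-$, which gives exactly the displayed equalities. I expect the only delicate point to be the bracket identity $[KS,KT]=K[S,T]$, since $[S,T]$ is only the span of brackets and not the set of brackets. The fix is to work consistently with spans and to use bilinearity in both arguments.
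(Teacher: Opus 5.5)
Your proposal is correct and follows essentially the same route as the paper. For the lower powers, the paper expands the $n$-fold commutator by multilinearity in one step, while you induct on the spans $L_{n+1}=[L_n,R]$; for the upper powers, both use the same induction $A^{(n+1)}=A[A^{(n)},A]A=K\otimes_{\F_p}\bigl(R[R^{(n)},R]R\bigr)$, and your flatness identification of $K\otimes_{\F_p}S$ with the $K$-span of $S$ in $KG$ (working with spans rather than sets of brackets) just makes explicit what the paper leaves implicit.
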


\begin{proof}
Multilinearity gives
\[
    [\lambda_1\otimes x_1,\ldots,\lambda_n\otimes x_n]
    =
    (\lambda_1\cdots\lambda_n)\otimes[x_1,\ldots,x_n],
\]
so the two-sided ideal generated by the \(n\)-fold commutators is
\[
    (K\otimes_{\F_p}R)^{[n]}
    =
    K\otimes_{\F_p}R^{[n]}.
\]

For the upper Lie powers, induction on \(n\) gives
\[
\begin{aligned}
    (K\otimes_{\F_p}R)^{(n+1)}
    &=
    (K\otimes_{\F_p}R)
    [K\otimes_{\F_p}R^{(n)},K\otimes_{\F_p}R]
    (K\otimes_{\F_p}R)\\
    &=
    K\otimes_{\F_p}\bigl(R[R^{(n)},R]R\bigr)\\
    &=
    K\otimes_{\F_p}R^{(n+1)},
\end{aligned}
\]
with \(n=1\) immediate.
\end{proof}

Thus every term of each Lie filtration over \(KG\) is obtained by scalar
extension from the corresponding term over \(\F_pG\). It remains only to
translate these identities into equality of the first vanishing indices. Since extension of scalars from \(\F_p\) to \(K\) is faithful, an
\(\F_p\)-vector space is zero exactly when its scalar extension to \(K\)
is zero. Hence, for every \(n\geq1\), the \(n\)-th lower Lie power of
\(KG\) vanishes exactly when the \(n\)-th lower Lie power of \(\F_pG\)
vanishes, and the same is true for the corresponding upper Lie powers.
Thus the first vanishing term of each Lie series is unchanged by scalar
extension. Consequently,
\(t_L(KG)=t_L(\F_pG)\) and \(t^L(KG)=t^L(\F_pG)\). Combining these
identities with Theorem~\ref{thm:main-common-p} gives
\(t_L(KG)=t^L(KG)=N_p+2\).

This completes the argument. The routing construction supplies a genuine
chronological history reaching the terminal PBW direction, and hence the
required lower bound for the lower Lie nilpotency index. The general
containment of lower Lie powers in upper Lie powers supplies the reverse
inequality, while scalar extension shows that the resulting equality over
\(\F_p\) persists over every finite field \(K\) of characteristic \(p\).



\section{Conclusion}
\label{sec:conclusion}

We have proved that, for every finite field \(K\) of characteristic \(p\)
and every finite \(p\)-group \(G\), \(t_L(KG)=t^L(KG)=N_p+2\). Thus the
lower and upper Lie nilpotency indices of a modular group algebra
coincide without any additional structural hypothesis on the derived
subgroup of \(G\). The proof converts the static information supplied
by the upper Lie dimension filtration into one genuine chronological
lower Lie history. Jennings' formula identifies the terminal restricted
PBW weight \(N_p\), while the routing construction organizes the
homogeneous directions through a fixed basis with stored PUSH and POWER
provenance. The protection and noncancellation machinery ensures that
distinguished PBW contributions survive competing routing terms and
restricted normalization. Reconstruction, FILL, and POWER are then
established simultaneously by strong induction on homogeneous weight,
providing the local chronological operations needed for the subsequent
construction.

These local operations are assembled by BUILD and REBUILD. BUILD creates
new certified occupancy along stored ancestry, while REBUILD restores
any previously certified occupancy spent in producing that progress.
The resulting PBW reachability theorem strictly decreases the occupancy
deficit while preserving all previously certified occupancy. Iterating
this process reaches the terminal restricted PBW state in finitely many
steps, and the successive continuations concatenate to one genuine
chronological routing history. Terminal PBW rebasing then shows that the
fixed terminal monomial \(\Omegatopp\) occurs with nonzero coefficient
in the complete graded output of that single history. The terminal PBW
reduction therefore gives the difficult inequality
\(t_L(\F_pG)\geq t^L(\F_pG)\), while the reverse inequality follows
formally from \(A^{[n]}\subseteq A^{(n)}\). Hence
\(t_L(\F_pG)=t^L(\F_pG)=N_p+2\). Finally, scalar extension preserves
both Lie filtrations and their first vanishing terms, carrying this
equality from \(\F_pG\) to \(KG\).

The construction is uniform in \(p\). In particular, the same restricted
PBW, stored ancestry, protection, simultaneous reconstruction, and
BUILD and REBUILD mechanisms apply for every prime, including
characteristics \(2\) and \(3\). The argument therefore realizes the
Jennings upper Lie weight by an actual chronological lower Lie process
within a single uniform construction, and yields
\(t_L(KG)=t^L(KG)\) for every finite \(p\)-group \(G\) over every finite
field \(K\) of characteristic \(p\).

\bibliographystyle{plain}
\bibliography{references}

@article{BhandariPassi1992,
 author = {Bhandari, Ashwani K. and Passi, I. B. S.},
 title = {Lie-nilpotency indices of group algebras},
 fjournal = {Bulletin of the London Mathematical Society},
 journal = {Bull. Lond. Math. Soc.},
 issn = {0024-6093},
 volume = {24},
 number = {1},
 pages = {68--70},
 year = {1992},
 language = {English},
 doi = {10.1112/blms/24.1.68},
 zbMATH = {58500},
 Zbl = {0777.20004}
}

@article{BhattChandra2022,
 author = {Bhatt, Suchi and Chandra, Harish},
 title = {A note on modular group algebras with upper {Lie} nilpotency indices},
 fjournal = {Algebra and Discrete Mathematics},
 journal = {Algebra Discrete Math.},
 issn = {1726-3255},
 volume = {33},
 number = {2},
 pages = {68--70},
 year = {2022},
 language = {English},
 url = {admjournal.luguniv.edu.ua/index.php/adm/article/view/1694},
 zbMATH = {7614198},
 Zbl = {1512.16022}
}

@article{BovdiJuhaszSpinelli2006,
 author = {Bovdi, Victor and Juh{\'a}sz, Tibor and Spinelli, Ernesto},
 title = {Modular group algebras with almost maximal {Lie} nilpotency indices.},
 fjournal = {Algebras and Representation Theory},
 journal = {Algebr. Represent. Theory},
 issn = {1386-923X},
 volume = {9},
 number = {3},
 pages = {259--266},
 year = {2006},
 language = {English},
 doi = {10.1007/s10468-006-9022-5},
 url = {link.springer.com/10.1007/s10468-006-9022-5},
 zbMATH = {5125407},
 Zbl = {1115.16012}
}

@article{BovdiSpinelli2005,
 author = {Bovdi, Victor and Spinelli, Ernesto},
 title = {Modular group algebras with maximal {Lie} nilpotency indices.},
 fjournal = {Publicationes Mathematicae Debrecen},
 journal = {Publ. Math. Debr.},
 issn = {0033-3883},
 volume = {65},
 number = {1-2},
 pages = {243--252},
 year = {2004},
 language = {English},
 doi = {10.5486/pmd.2004.3234},
 zbMATH = {2186377},
 Zbl = {1070.16026}
}

@article{BovdiSrivastava2010,
 author = {Bovdi, V. and Srivastava, J. B.},
 title = {Lie nilpotency indices of modular group algebras.},
 fjournal = {Algebra Colloquium},
 journal = {Algebra Colloq.},
 issn = {1005-3867},
 volume = {17},
 number = {1},
 pages = {17--26},
 year = {2010},
 language = {English},
 doi = {10.1142/S1005386710000040},
 url = {www.worldscinet.com/ac/17/1701/S1005386710000040.html},
 zbMATH = {5658023},
 Zbl = {1197.16030}
}

@book{Dixonetall1999,
 author = {Dixon, J. D. and du Sautoy, M. P. F. and Mann, A. and Segal, D.},
 title = {Analytic pro-{{\(p\)}} groups. {Revised} and enlarged by {Marcus} du {Sautoy} and {Dan} {Segal}.},
 edition = {2nd ed.},
 fseries = {Cambridge Studies in Advanced Mathematics},
 series = {Camb. Stud. Adv. Math.},
 volume = {61},
 isbn = {0-521-65011-9},
 year = {1999},
 publisher = {Cambridge: Cambridge University Press},
 language = {English},
 zbMATH = {1339096},
 Zbl = {0934.20001}
}

@article{Hall1934,
 author = {Hall, P.},
 title = {A contribution to the theory of groups of prime-power order},
 fjournal = {Proceedings of the London Mathematical Society. Second Series},
 journal = {Proc. Lond. Math. Soc. (2)},
 issn = {0024-6115},
 volume = {36},
 pages = {29--95},
 year = {1933},
 language = {English},
 doi = {10.1112/plms/s2-36.1.29},
 zbMATH = {3010343},
 Zbl = {0007.29102}
}

@book{Jacobson1962,
 author = {Jacobson, Nathan},
 title = {Lie algebras},
 fseries = {Interscience Tracts in Pure and Applied Mathematics},
 series = {Intersci. Tracts Pure Appl. Math.},
 volume = {10},
 year = {1962},
 publisher = {Interscience Publishers, New York, NY},
 language = {English},
 zbMATH = {3196329},
 Zbl = {0121.27504}
}

@article{Jennings1941,
 author = {Jennings, S. A.},
 title = {The structure of the group ring of a {{\(p\)}}-group over a modular field},
 fjournal = {Transactions of the American Mathematical Society},
 journal = {Trans. Am. Math. Soc.},
 issn = {0002-9947},
 volume = {50},
 pages = {175--185},
 year = {1941},
 language = {English},
 doi = {10.2307/1989916},
 zbMATH = {3040556},
 Zbl = {0025.24401}
}

@book{Passi1979,
 author = {Passi, Inder Bir S.},
 title = {Group rings and their augmentation ideals},
 fseries = {Lecture Notes in Mathematics},
 series = {Lect. Notes Math.},
 issn = {0075-8434},
 volume = {715},
 year = {1979},
 publisher = {Springer, Cham},
 language = {English},
 doi = {10.1007/bfb0067186},
 zbMATH = {3629200},
 Zbl = {0405.20007}
}

@article{PassiSehgal1975,
 author = {Passi, Inder Bir S. and Sehgal, Sudarshan K.},
 title = {Lie dimension subgroups},
 fjournal = {Communications in Algebra},
 journal = {Commun. Algebra},
 issn = {0092-7872},
 volume = {3},
 pages = {59--73},
 year = {1975},
 language = {English},
 doi = {10.1080/00927877508822032},
 zbMATH = {3500039},
 Zbl = {0319.20055}
}

@article{SahaiSharan2018,
 author = {Sahai, Meena and Sharan, Bhagwat},
 title = {On {Lie} nilpotent modular group algebras},
 fjournal = {Communications in Algebra},
 journal = {Commun. Algebra},
 issn = {0092-7872},
 volume = {46},
 number = {3},
 pages = {1199--1206},
 year = {2018},
 language = {English},
 doi = {10.1080/00927872.2017.1339059},
 zbMATH = {6880757},
 Zbl = {1405.16034}
}

@article{SahaiSharan2019,
 author = {Sahai, Meena and Sharan, Bhagwat},
 title = {A note on {Lie} nilpotent group algebras},
 fjournal = {Journal of Algebra and its Applications},
 journal = {J. Algebra Appl.},
 issn = {0219-4988},
 volume = {18},
 number = {9},
 pages = {12},
 note = {Id/No 1950163},
 year = {2019},
 language = {English},
 doi = {10.1142/S0219498819501639},
 zbMATH = {7098687},
 Zbl = {1450.16017}
}

@article{SahaiSharan2021,
 author = {Sahai, Meena and Sharan, Bhagwat},
 title = {On the lower {Lie} nilpotency index of a group algebra},
 fjournal = {Quaestiones Mathematicae},
 journal = {Quaest. Math.},
 issn = {1607-3606},
 volume = {44},
 number = {10},
 pages = {1347--1354},
 year = {2021},
 language = {English},
 doi = {10.2989/16073606.2020.1795740},
 zbMATH = {7462348},
 Zbl = {1489.16023}
}

@article{SahaiSharmaSiwach2017,
 author = {Siwach, Reetu and Sharma, R. K. and Sahai, Meena},
 title = {On the {Lie} nilpotency indices of modular group algebras},
 fjournal = {Beitr{\"a}ge zur Algebra und Geometrie},
 journal = {Beitr. Algebra Geom.},
 issn = {0138-4821},
 volume = {58},
 number = {2},
 pages = {355--367},
 year = {2017},
 language = {English},
 doi = {10.1007/s13366-016-0308-4},
 zbMATH = {6766172},
 Zbl = {1387.16020}
}

@article{SahaiSiwachSharma2018,
 author = {Sahai, Meena and Siwach, Reetu and Sharma, R. K.},
 title = {Lie nilpotency indices of modular group algebras. {II}},
 fjournal = {Asian-European Journal of Mathematics},
 journal = {Asian-Eur. J. Math.},
 issn = {1793-5571},
 volume = {11},
 number = {6},
 pages = {17},
 note = {Id/No 1850087},
 year = {2018},
 language = {English},
 doi = {10.1142/S1793557118500870},
 zbMATH = {6994660},
 Zbl = {1405.16035}
}

@article{Shalev1991,
 author = {Shalev, Aner},
 title = {Lie dimension subgroups, {Lie} nilpotency indices, and the exponent of the group of normalized units},
 fjournal = {Journal of the London Mathematical Society. Second Series},
 journal = {J. Lond. Math. Soc., II. Ser.},
 issn = {0024-6107},
 volume = {43},
 number = {1},
 pages = {23--36},
 year = {1991},
 language = {English},
 doi = {10.1112/jlms/s2-43.1.23},
 zbMATH = {4095670},
 Zbl = {0669.16005}
}

@article{Spinelli2009,
 author = {Spinelli, Ernesto},
 title = {Lie dimension subgroups and central series related to group algebras.},
 fjournal = {Algebra Colloquium},
 journal = {Algebra Colloq.},
 issn = {1005-3867},
 volume = {16},
 number = {3},
 pages = {427--436},
 year = {2009},
 language = {English},
 doi = {10.1142/S1005386709000418},
 zbMATH = {5580836},
 Zbl = {1183.16020}
}

@book{StradeFarnsteiner1988,
 author = {Strade, Helmut and Farnsteiner, Rolf},
 title = {Modular {Lie} algebras and their representations},
 fseries = {Pure and Applied Mathematics, Marcel Dekker},
 series = {Pure Appl. Math., Marcel Dekker},
 volume = {116},
 isbn = {0-8247-7594-5},
 year = {1988},
 publisher = {New York etc.: Marcel Dekker, Inc.},
 language = {English},
 zbMATH = {192889},
 Zbl = {0648.17003}
}

\end{document}